\newcommand{\p}{\partial}
\newcommand{\lk}{\ell k\,}
\newcommand{\rk}{\operatorname{rk}}
\newcommand{\Z}{\mathbb{Z}}
\newcommand{\C}{\mathbb{C}}
\newcommand{\ML}{\mathcal{L}}
\newcommand{\MA}{\mathcal{A}}
\newcommand{\MB}{\mathcal{B}}
\newcommand{\MG}{\mathcal{G}}
\newcommand{\MCR}{\mathcal{R}}
\newcommand{\Ad}{\operatorname{Ad}}
\newcommand{\ad}{\operatorname{ad}}
\newcommand{\diag}{\operatorname{diag}}
\newcommand{\Hom}{\operatorname{Hom}}
\newcommand{\R}{\mathbb{R}}
\newcommand{\OF}{\pi_1^{V}(\Sigma, \ML)}
\newcommand{\PD}{\operatorname{PD}}
\newcommand{\im}{\operatorname{im}}
\newcommand{\RP}{{\mathbb R\rm P}}
\newcommand{\Int}{\operatorname{int}}
\newcommand{\I}{I^{\natural}}
\newcommand{\IC}{IC^{\natural}}
\renewcommand{\k}{k^{\natural}}
\newcommand{\KN}{K^{\natural}}
\newcommand{\gr}{\operatorname{gr}\,}
\newcommand{\sign}{\operatorname{sign}}
\newcommand{\ind}{\operatorname{ind}}
\newcommand{\DA}{\mathcal{D}_{A}\,}
\newcommand{\DAP}{\mathcal{D}_{A'}\,}
\newcommand{\WC}{W^{\circ}}
\newcommand{\PR}{\mathcal{PR}}
\newcommand{\tr}{\operatorname{tr}}
\newcommand{\cs}{\mathbf{cs}}
\newcommand{\coker}{\operatorname{coker}}
\newcommand{\ch}{\operatorname{ch}}
\renewcommand{\Re}{\operatorname{Re}}
\newcommand{\D}{\mathcal D}
\newcommand{\so}{\mathfrak{so}}
\newcommand{\Fix}{\operatorname{Fix}\,}
\newcommand{\pistar}{\pi^*\hspace{-0.02in}}
\newcommand{\lcm}{\operatorname{lcm}}
\newtheorem{theorem}{Theorem}[section]
\newtheorem*{theorem1}{Theorem}
\newtheorem{prop}[theorem]{Proposition}
\newtheorem{lemma}[theorem]{Lemma}
\newtheorem{cor}[theorem]{Corollary}
\theoremstyle{definition}
\newtheorem{remark}[theorem]{Remark}
\newtheorem{example}[theorem]{Example}
\title{Link homology and equivariant gauge theory}
\thanks{Both authors were partially supported by NSF Grant 1065905.}
\author[Prayat Poudel]{Prayat Poudel}
\address{Department of Mathematics, University of Miami, Coral Gables, FL 33124}
\email{\rm{p.poudel@math.miami.edu}}
\author[Nikolai Saveliev]{Nikolai Saveliev}
\address{Department of Mathematics, University of Miami, Coral Gables, FL 33124}
\email{\rm{saveliev@math.miami.edu}}
\subjclass[2010]{57M27, 57R58}
\begin{document}
\begin{abstract}
The singular instanton Floer homology was defined by Kronheimer and Mrowka in connection with their proof that the Khovanov homology is an unknot detector. We study this theory for knots and two-component links using equivariant gauge theory on their double branched covers. We show that the special generator in the singular instanton Floer homology of a knot is graded by the knot signature mod 4, thereby providing a purely topological way of fixing the absolute grading in the theory. Our approach also results in explicit computations of the generators and gradings of the singular instanton Floer chain complex for several classes of knots with simple double branched covers, such as two-bridge knots, torus knots, and Montesinos knots, as well as for several families of two-components links.
\end{abstract}

\maketitle

\section{Introduction}
This paper studies the Floer homology $I_*(\Sigma,\ML)$ of two-component links $\ML \subset \Sigma$ in homology $3$-spheres defined by Kronheimer and Mrowka \cite{KM:khovanov} using singular $SO(3)$ instantons. An important special case of this theory is the singular instanton knot Floer homology $\I(k)$ for knots $k \subset S^3$ obtained by applying $I_*(S^3,\ML)$ to the link $\ML$ which is a connected sum of $k$ with the Hopf link. The Floer homology $I_*(\Sigma,\ML)$ has a relative $\Z/4$ grading, which can be upgraded to an absolute $\Z/4$ grading in the special case of $\I(k)$. Kronheimer and Mrowka \cite{KM:khovanov} used $\I(k)$ and its close cousin $I^{\sharp}(k)$ to prove that the reduced Khovanov homology is an unknot-detector.

The definition of groups $I_*(\Sigma,\ML)$ uses singular gauge theory, which makes them difficult to compute. We propose a new approach to these computations which uses equivariant gauge theory in place of the singular one. Given a two-component link $\ML$ in an integral homology sphere $\Sigma$, we pass to the double branched cover $M \to \Sigma$ with branch set $\ML$ and observe that  the singular connections on $\Sigma$ used in the definition of $I_*(\Sigma,\ML)$ pull back to equivariant smooth connections on $M$. The generators of the Floer chain complex $IC_*(\Sigma,\ML)$, whose homology is $I_*(\Sigma,\ML)$, are then derived from the equivariant representations $\pi_1 M \to SO(3)$, and their Floer gradings are computed using equivariant rather than singular index theory\footnote[2]{Note that the theory $I_*(\Sigma,\ML)$ is different from $\I (\Sigma,\ML)$ studied in \cite{KM:khovanov}: the latter is a Floer homology of a three-component link obtained by summing $\ML$ with the Hopf link.}.

As our first application of this approach, we determine the grading of the special generator in the Floer chain complex $\IC(k)$ of a knot $k \subset S^3$, see Section \ref{S:special}. This fixes the absolute $\Z/4$ grading on $\I(k)$ and confirms the conjecture of Hedden, Herald and Kirk \cite{HHK}.

\begin{theorem1}
For any knot $k \subset S^3$, the grading of the special generator in the Floer chain complex $\IC(k)$ equals $\sign k$ mod 4.
\end{theorem1}

We also achieve significant simplifications in computing the Floer chain complexes $\IC(k)$ and $IC_*(\Sigma,\ML)$ for knots and links with simple double branched covers, such as torus and Montesinos knots and links, whose double branched covers are Seifert fibered manifolds. Explicit calculations for these knots and links are possible because the gauge theory on Seifert fibered manifolds is sufficiently well developed, see Fintushel and Stern \cite{FS} and, in the equivariant setting, Collin--Saveliev \cite{CS} and Saveliev \cite{S:jdg}. Here are sample results of our calculations:

\bigskip\noindent (1)\;
The Floer chain complex $\IC(k)$ of a two-bridge knot $k$ is calculated in Section \ref{S:bridge}. For example, the Floer chain complex of the figure-eight knot consists of free abelian groups of ranks $(1,1,2,1)$. In fact, the Kronheimer--Mrowka \cite{KM:khovanov} spectral sequence is known to collapse for all two-bridge knots $k$, which implies that $\IC(k) = \I(k)$ for all such knots.

\bigskip\noindent (2)\;
The Floer chain complex $\IC(k)$ of a Montesinos knot $k = k(p,q,r)$ whose double branched cover is a Brieskorn homology sphere $\Sigma(p,q,r)$ consists of free abelian groups of ranks $(1 + b, b, b, b)$, where $b$ equals $-2$ times the Casson invariant of $\Sigma(p,q,r)$, see Section \ref{S:mont1}. General Montesinos knots are discussed in Section \ref{S:mont2}.

\bigskip\noindent (3)\;
The Floer chain complex $IC_*(S^3,\ML)$ of two-component Montesinos links $\ML = K((a_1,b_1),\ldots,(a_n,b_n))$ whose double branched cover is a homology $S^1 \times S^2$ is calculated in Section \ref{S:mont}. For example, the chain complex of the pretzel link $\ML = P(2,-3,-6)$ consists of free abelian groups of ranks $(2,0,2,0)$ up to cyclic permutation, see Section \ref{S:pretzel}. It has zero differential hence $IC_*(S^3,\ML) = I_*(S^3,\ML).$ 

\bigskip\noindent (4)\;
Our calculations for torus knots are less satisfactory because the equivariant index theory in this setting is less well developed. For instance, we prove that the Floer chain complex $\IC(k)$ of a torus knot $k = T_{p,q}$ with odd co-prime integers $p$ and $q$ has rank $1 + 4a$, where $a = -\sign\,(T_{p,q})/4$, and we conjecture that the Floer chain groups have ranks $(1 + a, a, a, a)$, see Section \ref{S:torus1}. A complete calculation of the Floer chain complex of the torus knot $T_{3,4}$ can be found in Example \ref{E:34}.

\bigskip
Some of the above results concerning two-bridge and torus knots were obtained earlier by Hedden, Herald, and Kirk \cite{HHK} using pillowcase techniques, which are completely different from our equivariant methods. We do not discuss the more difficult problem of computing the boundary operators in the Floer chain complexes $\IC(k)$ and $IC_*(\Sigma,\ML)$. Such calculations are still out of reach except in a few special cases. However, it may be worth investigating if our equivariant techniques can shed some light on this problem.

Here is an outline of the paper. It begins with a sketch of the definition of $I_*(\Sigma,\ML)$ mainly following Kronheimer and Mrowka \cite{KM:khovanov} but using the language of projective representations developed in Ruberman--Saveliev \cite{RSI}, see also Dostoglou--Salamon \cite{DS}. We obtain a purely algebraic description of the generators in $IC_*(\Sigma,\ML)$ as well as of a certain natural $\Z/2\,\oplus\,\Z/2$ action on them, which is crucial to the rest of the paper.

Equivariant gauge theory is developed in Section \ref{S:equiv}. The section begins with a computation of $\Z/2$ cohomology rings of double branched covers $M \to \Sigma$ of two-component links, followed by a computation of the characteristic classes of $SO(3)$ bundles on $M$ pulled back from orbifold bundles on $\Sigma$. The results are used to establish a bijective correspondence between equivariant $SO(3)$ representations of $\pi_1 M$ and orbifold $SO(3)$ representations of $\pi_1 \Sigma$. In the rest of the section, we discuss equivariant index theory which is used later in the paper to compute Floer gradings of the generators in $IC_*(\Sigma,\ML)$. Our equivariant index theory approach is also used to recover the Kronheimer--Mrowka \cite{KM:khovanov} singular index formulas along the lines of Wang's paper \cite{wang}.

The next five sections are dedicated to the singular knot Floer homology $\I (k)$ for knots $k \subset S^3$. Section \ref{S:gen} describes generators in the chain complex $\IC (k)$ in terms of equivariant representations $\pi_1 Y \to SO(3)$ on the double branched cover $Y \to S^3$ with branch set the knot $k$. These representations fall into three categories: trivial, reducible non-trivial, and irreducible. 

The trivial representation $\theta: \pi_1 Y \to SO(3)$ gives rise to a special generator $\alpha \in \IC(k)$ which is used in \cite{KM:khovanov} to fix an absolute grading on $\I(k)$. This generator is dealt with in Section \ref{S:special}. We pass to the double branched cover and use Taubes \cite{T:periodic} index theory on manifolds with periodic ends to show that the Floer grading of $\alpha$ equals $\sign\,(k)$ mod 4.

Having computed the absolute index of $\alpha$, we only need to compute the relative indices of the remaining generators. We derive formulas for these gradings in Section \ref{S:other} using equivariant index calculations on double branched covers, and apply these formulas to Montesinos and torus knots in the section that follows.

Section \ref{S:links} contains calculations of $IC_*(\Sigma,\ML)$ for several two-component links $\ML$ not of the form $\k$. For the pretzel link $\ML = P(2,-3,-6)$ in the 3-sphere we obtain a complete calculation of the Floer homology groups of $P(2,-3,-6)$ and not just of the Floer chain complex. The same answer is independently confirmed by computing the Floer homology of Harper--Saveliev \cite{HS} for this two-component link: the latter theory is isomorphic to $I_*(\Sigma,\ML)$ but does not use singular connections in its definition.

Finally, Section \ref{S:top} contains proofs of some topological results, which were postponed earlier in the paper for the sake of exposition.

\smallskip\noindent
\textbf{Acknowledgments:} We are thankful to Ken Baker, Paul Kirk, and Daniel Ruberman for useful discussions.


\section{Link homology}\label{S:km}
In this section, we will sketch the definition of the singular instanton Floer homology $I_*(\Sigma,\ML)$ of a two-component link $\ML \subset \Sigma$ in an integral homology $3$-sphere. We will follow Kronheimer and Mrowka \cite{KM:khovanov} closely, deviating in just two respects: we will use the language of projective representations to describe the generators in the Floer chain complex, and will introduce a canonical $\Z/2\,\oplus\,\Z/2$ action on these generators.


\subsection{The Chern--Simons functional}
Given a two-component link $\ML$ in an integral homology sphere $\Sigma$, the second homology of its exterior $X = \Sigma - \Int N(\ML)$ is isomorphic to a copy of $\Z$ spanned by either one of the boundary tori of $X$. Let $P \to X$ be the unique $SO(3)$ bundle with a non-trivial  second Stiefel--Whitney class $w_2 (P) \in H^2 (X; \Z/2) = \Z/2$. The flat connections in this bundle serve as the starting point for building $I_*(\Sigma,\ML)$. Since $w_2 (P)$ evaluates non-trivially on the boundary tori, these connections are necessarily irreducible and have order two holonomy along the meridians of the link components. Therefore, they give rise to flat connections in an orbifold $SO(3)$ bundle on $\Sigma$, which we again call $P$. The homology sphere $\Sigma$ itself is viewed as an orbifold with the orbifold singularity $\ML$, equipped with a Riemannian metric with cone angle $\pi$ along the singular set.

Kronheimer and Mrowka \cite{KM:khovanov} interpreted the gauge equivalence classes of the orbifold flat connections in $P$ as the critical points of an orbifold Chern--Simons functional
\begin{equation}\label{E:cs}
\cs: \MB\, (\Sigma,\ML) \to \R/\Z,
\end{equation}
and defined $I_*(\Sigma,\ML)$ as its Morse homology.  An important feature of this construction is the use of the restricted orbifold gauge group $\MG_S$ in the definition of the configuration space,
\[
\MB\,(\Sigma,\ML) = \MA\,(\Sigma,\ML)/\MG_S,
\]
where $\MA\,(\Sigma,\ML)$ is an affine space of orbifold connections and $\MG_S$ is the quotient of the determinant-one orbifold gauge group $\MG(\check P)$ of Kronheimer and Mrowka \cite[Section 2.6]{KM:khovanov} by its center $\{\pm 1\}$. The group $\MG_S$ is a normal subgroup of the full orbifold gauge group $\MG$ with the quotient $\MG/\MG_S = H^1 (X;\Z/2) = \Z/2\,\oplus\,\Z/2$. The full gauge group $\MG$ acts on $\MA\,(\Sigma,\ML)$ preserving the gradient of $\cs$, thereby giving rise to the residual action of $H^1 (X;\Z/2)$ on the configuration space $\MB\,(\Sigma,\ML)$ and on the critical point set of the Chern--Simons functional.

We will next describe the critical points of the functional \eqref{E:cs} algebraically using the holonomy correspondence between flat connections and representations of the fundamental group. A variant of this classical correspondence which applies to the situation at hand was described in \cite[Section 3.2]{RSI} using projective $SU(2)$ representations. We will review these first, see \cite[Section 3.1]{RSI} for details.


\subsection{Projective representations}\label{S:proj}
Let $G$ be a finitely presented group and view the center of $SU(2)$ as $\Z/2 = \{ \pm 1 \}$. A map $\rho: G \to SU(2)$ is called a projective representation if 
\[
c(g,h)\, =\, \rho(gh)\, \rho(h)^{-1} \rho(g)^{-1} \in \Z/2\quad\text{for all}\;\; g,h \in G. 
\]
The function $c: G \times G \to \Z/2$ is a 2-cocycle on $G$ defining a cohomology class $[c] \in H^2 (G;\Z/2)$. This class has the following interpretation. The composition of $\rho: G \to SU(2)$ with $\Ad: SU(2) \to SO(3)$ is a representation $\Ad \rho: G \to SO(3)$. As such, it induces a continuous map $BG \to BSO(3)$ which is unique up to homotopy. The pull back of the universal Stiefel--Whitney class $w_2 \in H^2 (BSO(3);\Z/2)$ via this map is our class $[c] = w_2 (\Ad\rho) \in H^2 (G; \Z/2)$. It serves as an obstruction to lifting $\Ad\rho: G \to SO(3)$ to an $SU(2)$ representation.

Let $\PR_c (G; SU(2))$ be the space of conjugacy classes of projective representations $\rho: G \to SU(2)$ whose associated cocycle is $c$. The topology on $\PR_c (G; SU(2))$ is supplied by the algebraic set structure. One can easily see that $\PR_c (G; SU(2))$ is determined uniquely up to homeomorphism by the cohomology class of $c$. The group $H^1(G;\Z/2) = \Hom(G, \Z/2)$ acts on $\PR_c (G;SU(2))$ by sending $\rho$ to $\chi \cdot \rho$ for any $\chi \in \Hom(G, \Z/2)$. The orbits of this action are in a bijective correspondence with the conjugacy classes of representations $G \to SO(3)$ whose second Stiefel--Whitney class equals $[c]$. The bijection is given by taking the adjoint representation.

Projective representations $\rho: G \to SU(2)$ can also be described in terms of a presentation $G = F/R$. Consider a homomorphism $\gamma: R \to \Z/2$ defined by its values $\gamma(r) = \pm 1$ on the relators $r \in R$ and by the condition that it is constant on the orbits of the adjoint action of $F$ on $R$. Also, choose a set-theoretic section $s: G \longrightarrow F$ in the exact sequence
\[
\begin{tikzpicture}
\draw (1.5,1) node (a) {$1$};
\draw (3,1) node (b) {$R$};
\draw (5,1) node (c) {$F$};
\draw (7,1) node (d) {$G$};
\draw (8.5,1) node (e) {$1$};
\draw[->](a)--(b);
\draw[->](b)--(c) node [midway,above](TextNode){$i$};
\draw[->](c)--(d) node [midway,above](TextNode){$\pi$};
\draw[->](d)--(e);
\end{tikzpicture}
\]
and denote by $r: G \times G \longrightarrow R$ the function defined by the formula $s(gh) = r(g,h) s(g)s(h)$.

\begin{prop}\label{P:proj}
A choice of a section $s: G \to F$ establishes a bijective correspondence between the conjugacy classes of projective representations $\rho: G \to SU(2)$ with the cocycle $c(g,h) = \gamma(r(g,h))$,  and the conjugacy classes of homomorphisms $\sigma: F \to SU(2)$ such that $i^*\sigma = \gamma$. A different choice of $s$ results in a cohomologous cocycle.
\end{prop}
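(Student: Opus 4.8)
The plan is to unwind both sides of the claimed bijection and produce explicit maps back and forth. First I would fix the set-theoretic section $s: G \to F$ and observe that any homomorphism $\sigma: F \to SU(2)$ with $i^*\sigma = \gamma$ determines a map $\rho := \sigma \circ s: G \to SU(2)$. Because $\sigma$ is a homomorphism and $s(gh) = r(g,h)\, s(g) s(h)$, one computes directly that
\[
\rho(gh) = \sigma(s(gh)) = \sigma(r(g,h))\,\sigma(s(g))\,\sigma(s(h)) = \gamma(r(g,h))\,\rho(g)\rho(h),
\]
using $i^*\sigma = \gamma$ to evaluate $\sigma$ on the relator $r(g,h) \in R$, and noting $\gamma(r(g,h)) = \pm 1$ lies in the center. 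Hence $\rho$ is a projective representation with cocycle $c(g,h) = \gamma(r(g,h))$, which is exactly the prescribed cocycle. This gives the forward map on representatives.

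For the inverse direction I would start with a projective representation $\rho: G \to SU(2)$ whose cocycle is $c(g,h) = \gamma(r(g,h))$, and build a homomorphism $\sigma: F \to SU(2)$ by defining it on free generators of $F$: write each generator $x$ as $x = r_0\, s(\pi(x))$ for a unique $r_0 \in R$ (using that $s$ is a section), and set $\sigma(x) := \gamma(r_0)\,\rho(\pi(x))$. The content is then to verify that this extends to a homomorphism on all of $F$ and that $i^*\sigma = \gamma$; the second is essentially built in, since on $R$ the map $\pi$ is trivial and $\sigma$ reduces to $\gamma$. The first requires checking $\sigma(w w') = \sigma(w)\sigma(w')$ for words $w, w' \in F$, which follows by writing $w = r_w s(\pi w)$, $w' = r_{w'} s(\pi w')$ and using the cocycle identity for $\gamma \circ r$ together with the fact that $\gamma$ is $\Ad F$-invariant (so conjugating $r_{w'}$ by $s(\pi w)$ does not change its $\gamma$-value). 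One then checks the two constructions are mutually inverse on conjugacy classes: conjugating $\sigma$ by an element of $SU(2)$ conjugates $\rho$ by the same element and vice versa, so the correspondence descends to conjugacy classes.

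Finally, for the last sentence, I would take a second section $s': G \to F$ and compare the associated functions $r, r': G\times G \to R$. Since $s'(g) = t(g)\, s(g)$ for a unique function $t: G \to R$, a short computation expresses $r'(g,h)$ in terms of $r(g,h)$, $t(g)$, $t(h)$, $t(gh)$ and conjugates thereof; applying $\gamma$ and using its $\Ad F$-invariance shows $\gamma(r'(g,h))$ and $\gamma(r(g,h))$ differ by the coboundary of the $\Z/2$-valued function $g \mapsto \gamma(t(g))$, hence define the same class in $H^2(G;\Z/2)$.

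The main obstacle I anticipate is the bookkeeping in showing that $\sigma$, defined on generators, is well-defined as a homomorphism on $F$ — i.e. that the naive formula $\sigma(r_w s(\pi w)) = \gamma(r_w)\rho(\pi w)$ is consistent and multiplicative. The one genuine input beyond formal manipulation is the hypothesis that $\gamma$ is constant on $\Ad F$-orbits in $R$; this is precisely what is needed to move section-elements past relators without disturbing $\gamma$-values, and I would flag it as the crux of the argument. Everything else is a matter of carefully tracking the identity $s(gh) = r(g,h)s(g)s(h)$ through the computations.
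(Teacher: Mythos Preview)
Your proposal is correct and follows essentially the same approach as the paper: both define the forward map by $\rho = \sigma \circ s$ with the identical verification, and both construct the inverse by the formula $\sigma(r \cdot s(g)) = \gamma(r)\rho(g)$, identifying the $\Ad F$-invariance of $\gamma$ as the key input. The paper also includes a short direct check that $c(g,h) = \gamma(r(g,h))$ satisfies the cocycle condition (which you omit), and it defines the inverse $\sigma$ directly on all of $F$ rather than on free generators first; your phrasing ``verify that this extends to a homomorphism'' is slightly off, since extension from generators of a free group is automatic --- what actually needs checking is that the extension satisfies $\sigma(r_w\, s(\pi w)) = \gamma(r_w)\,\rho(\pi w)$ for every $w \in F$, which is exactly the multiplicativity check the paper performs and which you correctly describe in the next sentence.
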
 

\begin{proof}
We begin by checking that $c(g,h) = \gamma(r(g,h))$ is a cocycle. For any $g, h, k \in G$, we have
\begin{gather}
s(ghk) = r(gh,k) s(gh) s(k) = r(gh,k) r(g,h) s(g) s(h) s(k), \notag \\
s(ghk) = r(g,hk) s(g) s(hk) = r(g,hk) s(g) r(h,k) s(h) s(k), \notag
\end{gather}
which results in $r(gh,k) r(g,h) = r(g,hk) s(g) r(h,k) s(g)^{-1}$. Since the homomorphism $\gamma$ is constant on the orbits of the adjoint action of $F$ on $R$, its application to the above equality gives the cocycle condition $c(gh,k) c(g,h) = c(g,hk) c(h,k)$ as desired.

Now, given a homomorphism $\sigma: F \to SU(2)$ such that $i^*\sigma = \gamma$, define $\rho: G \to SU(2)$ by the formula $\rho(g) = \sigma(s(g))$. Then $\rho(gh) = \sigma(s(gh)) = \sigma(r(g,h)s(g)s(h)) = \gamma(r(g,h)) \sigma(s(g)) \sigma(s(h)) = c(g,h) \rho(g)\rho(h)$, hence $\rho$ is a projective representation with cocycle $c$. It is clear that conjugate representations $\sigma$ define conjugate projective representations $\rho$, and that a different choice of $s$ leads to a cohomologous cocycle $c$.

The inverse correspondence is defined as follows. Given a projective representation $\rho: G \to SU(2)$, write elements of $F$ in the form $r\cdot s(g)$ with $r \in R$ and $g \in G$, and define $\sigma: F \to SU(2)$ by the formula $\sigma(r\cdot s(g)) = \gamma(r) \rho(g)$. That $\sigma$ is a homomorphism can be checked by a straightforward calculation using the fact that $c(g,h) = \gamma(r(g,h))$.
\end{proof}

\begin{example}\label{E:floer}
Let $G = \pi_1 M$ be the fundamental group of a manifold $M$ obtained by 0--surgery on a knot $k$ in an integral homology sphere $\Sigma$. The group $\pi_1 M$ is obtained from $\pi_1 K$ by imposing the relation $\lambda = 1$, where $\lambda$ is a canonical longitude of $k$. Therefore, $\pi_1 M$ admits a presentation $\pi_1 M = F/R$ with $\lambda$ being one of the relators. Let $\gamma (\lambda) = -1$ and $\gamma(r) = 1$ for the rest of the relators $r \in R$. It has been known since Floer \cite{floer:triangle} that the action of $H^1(M;\Z/2) = \Z/2$ on the set of conjugacy classes of projective representations $\sigma: F \to SU(2)$ with $i^*\sigma = \gamma$ is free, providing a two-to-one correspondence between this set and the set of the conjugacy classes of representations $\pi_1 M \to SO(3)$ with non-trivial $w_2 \in H^2 (M;\Z/2) = \Z/2$.
\end{example}


\subsection{Holonomy correspondence}\label{S:hol}
We will now apply the general theory of Section \ref{S:proj} to the group $G = \pi_1 X$, where $X$ is the exterior of a two-component link $\ML$ in an integral homology sphere $\Sigma$. We begin with the following simple observation.

\begin{lemma}\label{L:hopf}
Unless the link $\ML$ is split, $H^2 (X;\Z/2) = H^2 (\pi_1 X;\Z/2) =\Z/2$. For split links, $I_*(\Sigma,\ML) = 0$.
\end{lemma}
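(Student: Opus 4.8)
I would treat the two assertions separately, and I would begin by observing that the equality $H^2(X;\Z/2) = \Z/2$ actually holds with \emph{no} hypothesis on $\ML$; the only role of non-splitness is to identify this group with $H^2(\pi_1 X;\Z/2)$.

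For the first assertion: the exterior $X = \Sigma - \Int N(\ML)$ is a compact orientable $3$-manifold with two torus boundary components, and $X$ is homotopy equivalent to $\Sigma \setminus \ML$. Alexander duality (valid with $\Z/2$ coefficients since $\Sigma$ is an integral, hence $\Z/2$-, homology sphere) gives $\tilde H_i(X;\Z/2) \cong \tilde H^{2-i}(\ML;\Z/2)$, which for a two-component link reads $H_2(X;\Z/2) \cong \tilde H^0(\ML;\Z/2) = \Z/2$ and $H_1(X;\Z/2) \cong \tilde H^1(\ML;\Z/2) = (\Z/2)^2$. Universal coefficients over the field $\Z/2$ then yields $H^2(X;\Z/2) \cong \Hom(H_2(X;\Z/2),\Z/2) = \Z/2$. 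To identify this with group cohomology I would invoke the sphere theorem: when $\ML$ is non-split the exterior $X$ is irreducible, hence $\pi_2 X = 0$; combining this with the standard facts that $H_2(X;\Z/2) \to H_2(\pi_1 X;\Z/2)$ is onto and $H^2(\pi_1 X;\Z/2) \hookrightarrow H^2(X;\Z/2)$, the vanishing of the Hurewicz image forces both maps to be isomorphisms, so $H^2(\pi_1 X;\Z/2) = H^2(X;\Z/2) = \Z/2$.

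For the second assertion, suppose $\ML = \ML_1 \sqcup \ML_2$ is split and fix an embedded $2$-sphere $S \subset \Sigma$ separating $\ML_1$ from $\ML_2$. This writes $\Sigma = \Sigma_1 \cup_S \Sigma_2$ as a union of two homology balls, and correspondingly $X = X_1 \cup_S X_2$ with $X_i = \Sigma_i - \Int N(\ML_i)$. A Mayer--Vietoris argument, using $H^1(S;\Z/2) = 0$, shows that $H^2(X;\Z/2) \to H^2(X_1;\Z/2) \oplus H^2(X_2;\Z/2)$ is injective. Each $X_i$ is the complement of a knot in a homology ball, so $H_2(X_i;\Z) = \Z$ is generated by the class of its spherical boundary component $[S]$, whence restriction $H^2(X_i;\Z/2) \to H^2(S;\Z/2) = \Z/2$ is an isomorphism. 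Chasing the Mayer--Vietoris sequence, the generator $w_2(P) \in H^2(X;\Z/2)$ restricts to each $X_i$ as the generator of $H^2(X_i;\Z/2)$, hence restricts nontrivially to $S$; thus $P|_S$ is the nontrivial $SO(3)$-bundle over $S$. Since $S$ is disjoint from the branch locus $\ML$, any orbifold flat connection on $P$ would restrict to an honest flat connection on $P|_S$, i.e.\ to a homomorphism $\pi_1 S = 1 \to SO(3)$, which would trivialise $P|_S$ --- a contradiction. Hence $\cs$ has no critical points, the chain complex $IC_*(\Sigma,\ML)$ is trivial, and $I_*(\Sigma,\ML) = 0$.

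The homological bookkeeping --- Alexander duality, the two Mayer--Vietoris sequences, and the computation $H_2(X_i;\Z) = \Z$ --- is routine and I would only indicate it. The one genuinely non-formal step is the identification $H^2(X;\Z/2) = H^2(\pi_1 X;\Z/2)$ in the non-split case: this is the sole place the hypothesis enters and it rests on $3$-manifold input (irreducibility of $X$ and the sphere theorem), so I expect it to be where the most care is needed, in particular regarding whether possible reducibility of $\Sigma$ itself must be excluded or separately handled.
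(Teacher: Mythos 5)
Your overall architecture is the same as the paper's (a Hopf-sequence argument in the non-split case, and the obstruction on the splitting sphere in the split case), but there is one genuine gap in the non-split part: the claim that ``$\ML$ non-split implies $X$ irreducible, hence $\pi_2 X = 0$'' is false at the stated level of generality. The lemma concerns links in an arbitrary integral homology sphere $\Sigma$, and $\Sigma$ may itself be a nontrivial connected sum of homology spheres; if $\ML$ is, say, a Hopf link contained in a ball in one summand, the connected-sum sphere is an essential sphere in $X$, so $X$ is reducible and $\pi_2 X \neq 0$ by the sphere theorem. You flagged this worry yourself, but your argument as written hinges on it. The repair is to prove only what the Hopf sequence actually needs, which is the paper's route: the Hurewicz image in $H_2(X)$ vanishes. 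By the sphere theorem, $\pi_2(X)$ is generated as a $\pi_1(X)$-module by embedded spheres; any embedded sphere $S \subset X$ separates $\Sigma$ (since $H_2(\Sigma)=0$), and because $\ML$ is non-split and disjoint from $S$, both components of $\ML$ lie on one side, so the other side is a compact region contained in $X$ with boundary $S$. Hence every such sphere is null-homologous in $X$, the Hurewicz map $\pi_2(X) \to H_2(X)$ is zero, and the Hopf exact sequence $\pi_2(X) \to H_2(X) \to H_2(\pi_1 X) \to 0$ (with $\Z/2$ coefficients, or integrally followed by universal coefficients) gives $H^2(\pi_1 X;\Z/2) = H^2(X;\Z/2) = \Z/2$, exactly as in the paper. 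No irreducibility or asphericity of $X$ is needed, and reducibility of $\Sigma$ requires no separate treatment.

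The split case in your proposal is correct and is essentially the paper's argument, only more elaborate: rather than running Mayer--Vietoris over the splitting sphere $S$, the paper simply notes that $[S]$ generates $H_2(X;\Z) = \Z$ (hence $H_2(X;\Z/2)$, as $H_1(X;\Z)$ is torsion-free), so the nontrivial class $w_2(P)$ evaluates nontrivially on $S$; since a flat connection would restrict to a flat, hence trivial, $SO(3)$ bundle on the simply connected $S$, there are no critical points and $I_*(\Sigma,\ML)=0$. Your preliminary observation that $H^2(X;\Z/2)=\Z/2$ holds for all two-component links, split or not, is correct and consistent with how the bundle $P$ is introduced in Section 2.1.
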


\begin{proof}
For a split link $\ML$, the splitting sphere generates the group $H_2 (X;\Z) = \Z$. Since there are no flat connections on this sphere with non-trivial $w_2 (P)$ the group $I_* (\Sigma, \ML)$ must vanish. For a non-split link, the claimed equality follows from the Hopf exact sequence

\[
\begin{tikzpicture}
\draw (1,1) node (a) {$\pi_2(X)$};
\draw (3,1) node (b) {$H_2(X)$};
\draw (6,1) node (c) {$H_2(\pi_1 X)$};
\draw (8,1) node (d) {$0$};
\draw[->](a)--(b);
\draw[->](b)--(c);
\draw[->](c)--(d);
\end{tikzpicture}
\]
and the vanishing of the Hurewicz homomorphism $\pi_2 (X) \to H_2 (X)$.
\end{proof}

From now on, we will assume that the link $\ML \subset \Sigma$ is not split. The holonomy correspondence of \cite[Section 3.1]{RSI} identifies the critical point set of the functional \eqref{E:cs} with the set $\PR_c (X,SU(2))$ of the conjugacy classes of  projective representations $\rho: \pi_1 X \to SU(2)$, for any choice of cocycle $c$ such that $0 \ne [c] = w_2 (P) \in H^2 (X;\Z/2) = \Z/2$. Note that this identification commutes with the $H^1 (X;\Z/2)$ action, and that the orbits of this action on $\PR_c (X,SU(2))$ are in  a bijective correspondence with the conjugacy classes of representations $\Ad \rho: \pi_1 X \to SO(3)$ having $w_2 (\Ad\rho) \ne 0$.

\begin{lemma}\label{L:irr}
Any representation $\Ad \rho: \pi_1 X \to SO(3)$ with $w_2 (\Ad\rho) \ne 0$ is irreducible, that is, its image is not contained in a copy of $SO(2) \subset SO(3)$.
\end{lemma}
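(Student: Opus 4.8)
The plan is to argue by contrapositive: suppose $\Ad\rho$ is reducible, so its image lies in some copy of $SO(2) \subset SO(3)$, and then show that $w_2(\Ad\rho) = 0$. The key point is that an $SO(2)$-valued representation factors through an abelian group, so it is controlled by the homology $H_1(X;\Z)$ rather than all of $\pi_1 X$.

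First I would recall that $H_1(X;\Z) \cong \Z$, generated by (either of) the meridians of the link components, with the two meridians being homologous in $X$ — this is the standard computation for a link exterior in a homology sphere, and it is implicit in the discussion of the Chern--Simons functional above. Hence any homomorphism $\pi_1 X \to SO(2)$ factors through $H_1(X;\Z) = \Z$. Since $SO(2)$ is connected and the source is now free abelian of rank one, such a representation lifts to a homomorphism $\pi_1 X \to \R$ (choosing a lift of the generator's image under the covering $\R \to SO(2)$), and in particular it lifts to the double cover $\Spin(2) = SO(2) \to SO(2)$, equivalently to an $SU(2)$ representation landing in the maximal torus $S^1 \subset SU(2)$ whose adjoint recovers our $SO(2)$.

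Next I would translate this lifting statement into vanishing of $w_2$. By the interpretation of $[c] = w_2(\Ad\rho)$ given just before Proposition~\ref{P:proj}, the class $w_2(\Ad\rho) \in H^2(\pi_1 X;\Z/2)$ is precisely the obstruction to lifting $\Ad\rho: \pi_1 X \to SO(3)$ to an $SU(2)$ representation. We have just exhibited such a lift (the composition $\pi_1 X \to S^1 \hookrightarrow SU(2)$), so $w_2(\Ad\rho) = 0$. Combined with the identification $H^2(X;\Z/2) = H^2(\pi_1 X;\Z/2) = \Z/2$ from Lemma~\ref{L:hopf}, this contradicts the hypothesis $w_2(\Ad\rho) \neq 0$, completing the contrapositive.

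The only subtlety — and the place I would be most careful — is the reduction of an $SO(2)$ representation to its abelianization and the consequent lift to $SU(2)$: one must make sure that the chosen lift on $H_1(X;\Z) = \Z$ is genuinely adjoint to the original $SO(2)$ representation (not, say, off by composition with the nontrivial automorphism of $SO(2)$), but since $\R \to SO(2)$ and $S^1 \to SO(2) = S^1/\{\pm 1\}$ are both surjective and $\Z$ is free, any choice of preimage of the generator does the job. No genuine obstruction arises here because the source has been made free abelian; all the work is in the homology computation $H_1(X;\Z) = \Z$, which is standard.
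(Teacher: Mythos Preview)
Your overall strategy is sound, but you have a factual error: for a two-component link $\ML$ in an integral homology sphere $\Sigma$, the first homology of the exterior is $H_1(X;\Z) \cong \Z^2$, generated by the two meridians $\mu_1$ and $\mu_2$, which are \emph{not} homologous to each other. (You may be confusing this with $H_2(X;\Z) \cong \Z$, which is what is discussed in the Chern--Simons section you cite, or with the case of a knot exterior.) Fortunately this error is harmless for your argument: the only property you actually use is that $H_1(X;\Z)$ is free abelian, so that any homomorphism $H_1(X;\Z) \to SO(2)$ lifts to the double cover $\Spin(2) \subset SU(2)$ by choosing lifts of the generators. With $\Z^2$ in place of $\Z$ the rest of your proof goes through unchanged.

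Compared with the paper's proof, your approach is a global version of the same idea. The paper instead restricts $\Ad\rho$ to a single boundary torus $T^2 \subset \partial X$, uses the fact (established earlier) that $w_2(P)$ evaluates nontrivially on each boundary torus, and then observes that any reducible representation of $\pi_1 T^2 = \Z^2$ into $SO(3)$ lifts to $SU(2)$. Both arguments boil down to the same point---a map from a free abelian group into $SO(2)$ always lifts to $SU(2)$---but the paper localizes to the boundary while you work directly with $H_1(X;\Z)$. Your route is slightly more direct once the correct $H_1$ is in hand; the paper's route has the minor advantage of making explicit where the nontriviality of $w_2$ is being detected.
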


\begin{proof}
The restriction to $\rho$ to either boundary torus of $X$ has non-trivial second Stiefel--Whitney class, which implies that it does not lift to an $SU(2)$ representation. However, any reducible representation $\pi_1 T^2 \to SO(3)$ admits an $SU(2)$ lift, therefore, the image of $\rho$ cannot be contained in a copy of $SO(2) \subset SO(3)$. It is essential here that $H_1 (T^2)$ has no 2-torsion: a non-trivial $SO(3)$ representation of $\Z/2$ is reducible but does not admit an $SU(2)$ lift.
\end{proof}


\subsection{Floer gradings}
Given flat orbifold connections $\rho$ and $\sigma$ in the orbifold bundle $P \to \Sigma$, consider an arbitrary orbifold connection $A$ in the pull back bundle on the product $\R\,\times\,\Sigma$ matching $\rho$ and $\sigma$ near the negative and positive ends, respectively. Equip $\R\,\times\,\Sigma$ with the orbifold product metric and consider the ASD operator 
\begin{equation}\label{E:DA}
\DA(\rho,\sigma) = -d^*_A\,\oplus\,d^+_A: \;\Omega^1 (\R \times \Sigma, \ad P) \to (\Omega^0\,\oplus\,\Omega^2_+) (\R \times \Sigma,\ad P)
\end{equation}
completed in the orbifold Sobolev $L^2$ norms as in \cite[Section 3.1]{KM:khovanov}. Since $\rho$ and $\sigma$ are irreducible, this operator will be Fredholm if we further assume that $\rho$ and $\sigma$ are non-degenerate as the critical points of the Chern--Simons functional \eqref{E:cs}. Define the relative Floer grading as
\begin{equation}\label{E:rel-gr}
\gr(\rho,\sigma)\; =\; \ind \DA(\rho,\sigma)\;\;\text{(mod 4)}. 
\end{equation}
This grading is well defined because replacing either $\rho$ or $\sigma$ by its gauge equivalent within the restricted gauge group $\MG_S$ results in adding a multiple of four to the index of $\DA$, see \cite[Section 2.5]{KM:khovanov}. This is no longer true if we use the full gauge group. The following lemma makes it precise; it will be proved in Section \ref{S:deg}.

\begin{lemma}\label{L:deg}
Let $\chi_1$ and $\chi_2$ be the generators of $H^1 (X;\Z/2) = \Z/2\,\oplus\,\Z/2$ dual to the meridians of the link $\ML = \ell_1\,\cup\,\ell_2$. Then 
\[
\gr(\chi_1 \cdot \rho,\sigma)\, =\, \gr(\chi_2 \cdot\rho,\sigma)\, =\, \gr (\rho,\sigma)\, +\, 2\cdot\delta
\pmod 4,
\]
and similarly for the action on $\sigma$, where
\[
\delta\; =\; 
\begin{cases}
\; 0, &\text{if\; $\lk (\ell_1,\ell_2)$\, is odd}, \\
\; 1, &\text{if\; $\lk (\ell_1,\ell_2)$\, is even}.
\end{cases}
\]
\end{lemma}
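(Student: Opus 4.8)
The plan is to reduce the asserted index shift to the computation of a single ASD index over a closed $4$-orbifold --- the mapping torus of a gauge transformation representing $\chi_1$ --- and then to evaluate that index by the orbifold index theorem. Write $\chi_1\cdot\rho=u^*\rho$ for an honest $SO(3)$ gauge transformation $u\in\MG$ representing $\chi_1\in\MG/\MG_S=H^1(X;\Z/2)$. Choosing orbifold connections on $\R\times\Sigma$ from $\rho$ to $u^*\rho$, from $u^*\rho$ to $\sigma$, and from $\rho$ to $\sigma$, and using additivity of the index under gluing along the critical points (which are acyclic, being irreducible by Lemma \ref{L:irr} and non-degenerate by assumption), one obtains modulo $4$
\[
\gr(\chi_1\cdot\rho,\sigma)\;=\;\gr(\rho,\sigma)\,-\,\gr(\rho,u^*\rho),
\]
so everything comes down to the integer $n:=\gr(\rho,u^*\rho)\pmod 4$, which is independent of the interpolating connection and of $\sigma$. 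Moreover $n\in\{0,2\}$: pulling a connection from $\rho$ to $u^*\rho$ back by $u$ (extended slice-wise to $\R\times\Sigma$) produces one from $u^*\rho$ to $(u^2)^*\rho$, and since $\chi_1$ has order two we have $u^2\in\MG_S$, which changes the index only by multiples of $4$; gauge invariance of the index then gives $n\equiv\gr(u^*\rho,\rho)\pmod 4$, while additivity along the round trip gives $n+\gr(u^*\rho,\rho)\equiv 0\pmod 4$, whence $2n\equiv 0\pmod 4$. So only the choice between $0$ and $2$ --- equivalently, the parity of $n/2$ --- remains to be settled.

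The core of the argument is to identify $n$ modulo $4$ with (up to an irrelevant sign) the index of the ASD operator on the closed $4$-orbifold $Z_u$ obtained by gluing the two ends of $[0,1]\times\Sigma$ via $u$, equipped with the product orbifold metric of cone angle $\pi$ along $S^1\times\ML$, the $SO(3)$ orbifold bundle glued by $u$, and a connection that restricts to $\rho$ on each slice. Since $Z_u$ fibers over $S^1$, its orbifold Euler characteristic and signature vanish, so the orbifold index formula --- in the form of Kronheimer--Mrowka \cite[Section 2]{KM:khovanov}, or as recovered by the equivariant methods of Section \ref{S:equiv} (compare Wang \cite{wang}) --- expresses $\ind\D(Z_u)$ as an orbifold Pontryagin number of the twisted bundle together with correction terms localized along the singular tori $S^1\times\ell_1$ and $S^1\times\ell_2$. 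All of these depend on $u$ only through the class $\chi_1$, and in practice only through its restrictions to the boundary tori of $X$. This is where the linking number enters: from $H_1(X;\Z)=\Z\langle\mu_1\rangle\oplus\Z\langle\mu_2\rangle$ and the relation $\lambda_i\sim\lk(\ell_1,\ell_2)\,\mu_j$ (with $\{i,j\}=\{1,2\}$) one reads off that $\chi_1$ is non-trivial on $T_1$ always, and non-trivial on $T_2$ precisely when $\lk(\ell_1,\ell_2)$ is odd. Representing $\chi_1$ by a $u$ supported near a Seifert surface $F$ of $\ell_1$ cut off by $\partial X$, the index localizes into a contribution attached to $\ell_1=\partial\bar F$ plus one contribution attached to each of the $\lk(\ell_1,\ell_2)$ points of $\bar F\cap\ell_2$; a local model computation on $D^2\times T^2$ with cone angle $\pi$ along $\{0\}\times T^2$ gives that each such contribution is $2\pmod 4$, for a total of $2\bigl(1+\lk(\ell_1,\ell_2)\bigr)\equiv 2\delta\pmod 4$. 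Repeating the argument with the $\sigma$-end in place of the $\rho$-end, and with $\chi_2$ and a Seifert surface for $\ell_2$ in place of $\chi_1$ and $F$, yields the remaining equalities.

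The main obstacle is this last local index computation: one has to run the orbifold Atiyah--Patodi--Singer theorem on the model neighborhoods of the cone-angle-$\pi$ surface with the non-trivially twisted bundle and read off the mod-$4$ residue --- equivalently, by the reduction above, just the parity of half the index --- while keeping correct track of the $\eta$-invariant contributions of the transverse $\Z/2$-orbifold circle and of which flat limit sits at each end of the neck. A conceptually cleaner route, in the spirit of the rest of the paper, would be to pass to the double branched cover $M\to\Sigma$: there $\rho$ becomes a smooth $\tau$-equivariant connection, $\chi_1$ pulls back to a class in the $\Z/2$-cohomology of $M$ computed in Section \ref{S:equiv}, and $\gr(\chi_1\cdot\rho,\sigma)-\gr(\rho,\sigma)$ turns into a difference of $\Z/2$-equivariant indices evaluated by the equivariant index theorem, with $\lk(\ell_1,\ell_2)$ reappearing through the torsion in $H_1(M)$ and the fixed-point data of the covering involution $\tau$.
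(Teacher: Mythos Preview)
Your reduction to $\gr(\chi_1\cdot\rho,\rho)\pmod 4$ and its identification with a closed ASD index on the mapping torus $S^1\times\Sigma$ is exactly how the paper begins. The divergence is in how that index is evaluated. You attempt to stay on the orbifold and localize the index to the singular tori via a Seifert-surface representative of $\chi_1$, claiming a local model on $D^2\times T^2$ contributes $2\pmod 4$ at each crossing. This local contribution is asserted, not computed, and you yourself flag it as ``the main obstacle''; without it, the argument is a heuristic that happens to land on the right number $2(1+\lk(\ell_1,\ell_2))\equiv 2\delta\pmod 4$, not a proof.

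The paper instead follows precisely the ``cleaner route'' you propose in your last paragraph: pull everything back to the smooth double branched cover $S^1\times M$, where Proposition~\ref{P:index-formula} reduces the equivariant index to $-p_1(Q_0)\pmod 4$. The decisive input you are missing is the Braam--Donaldson formula for the Pontryagin number of a mapping-torus $SO(3)$ bundle,
\[
p_1(Q_0)\;\equiv\;2\,(\eta\cup w_2(Q)\,+\,\eta\cup\eta\cup\eta)\,[M]\pmod 4,
\]
where $\eta=\pi^*\chi_1\in H^1(M;\Z/2)$. This replaces your unproven local orbifold computation by a cup-product evaluation in $H^*(M;\Z/2)$, which is determined by $\lk(\ell_1,\ell_2)\pmod 2$ via Proposition~\ref{P:H2}: when the linking number is odd the ring is $\Z/2[\eta]/(\eta^4)$ and both triple products are $1$, giving $p_1\equiv 0$; when it is even, $\eta\cup\eta=0$ but $\eta\cup w_2(Q)$ generates $H^3$, giving $p_1\equiv 2$. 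The non-triviality of $\eta$ and $w_2(Q)$ are established in Lemma~\ref{L:branched} and Proposition~\ref{P:Q}. So the paper's route turns what you call an obstacle into three cohomology computations that are already done elsewhere in Section~\ref{S:equiv}.
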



\subsection{Perturbations}
The critical points of the Chern--Simons functional need not be non-degenerate, therefore, we may have to perturb it to define $I_*(\Sigma,\ML)$. The perturbations used in \cite[Section 3.4]{KM:khovanov} are the standard Wilson loop perturbations along loops in $\Sigma$ disjoint from the link $\ML$. There are sufficiently many such perturbations to guarantee the non-degeneracy of the critical points of the perturbed Chern--Simons functional as well as the transversality properties for the moduli spaces of trajectories of its gradient flow. This allows to define the boundary operator and to complete the definition of $I_*(\Sigma,\ML)$.


\section{Equivariant gauge theory}\label{S:equiv}
In this section, we survey some equivariant gauge theory on the double branched cover $M \to \Sigma$ of a homology sphere $\Sigma$ with branch set a two-component link $\ML$. It will be used in the forthcoming sections to make headway in computing the link homology $I_*(\Sigma,\ML)$.


\subsection{Topological preliminaries}\label{S:prelim}
Let $\Sigma$ be an integral homology 3-sphere and $\ML = \ell_1\cup\ell_2$ a link of two components in $\Sigma$. The link exterior $X = \Sigma - \Int N(\ML)$ is a manifold whose boundary consists of two tori, with $H_1 (X;\Z) = \Z^2$ spanned by the meridians $\mu_1$ and $\mu_2$ of the link components. The homomorphism $\pi_1 X \to \Z/2$ sending $\mu_1$ and $\mu_2$ to the generator of $\Z/2$ gives rise to a regular double cover $\widetilde X \to X$, and also to a double branched cover $\pi: M \to \Sigma$ with branching set $\ML$ and the covering translation $\tau: M \to M$. Denote by $\Delta(t)$ the one-variable Alexander polynomial of $\ML$.

\begin{prop}\label{P:kawauchi}
 The first Betti number of $M$ is one if $\Delta(-1) = 0$ and zero otherwise. In the latter case, $H_1 (M;\Z)$ is a finite group of order $|\Delta(-1)|$. The induced involution $\tau_*: H_1 (M) \to H_1 (M)$ is multiplication by $-1$.
\end{prop}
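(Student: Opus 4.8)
The plan is to compute the homology of the double branched cover $M$ from that of the infinite cyclic cover of the link exterior, using the standard machinery relating branched covers of knots/links to Alexander invariants. First I would recall that the double branched cover $M$ is built from the link exterior $X$ by gluing in solid tori along the two boundary components in such a way that the preimages of the meridians $\mu_1,\mu_2$ bound; equivalently, $M$ is obtained from the double cover $\widetilde X \to X$ (the one associated to the total linking-number homomorphism $\pi_1 X \to \Z/2$) by filling in the two boundary tori of $\widetilde X$. So I would first identify $H_1(\widetilde X;\Z)$ and then track what the Dehn fillings do to it.

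The key computational input is the Milnor exact sequence (or a direct Wang-type argument) relating the homology of a cyclic cover to the Alexander module. Let $\widetilde{X}_\infty \to X$ be the infinite cyclic cover associated to the total linking number map $\pi_1 X \to \Z$, and let $H_1(\widetilde{X}_\infty;\Q)$ be the rational Alexander module, a torsion $\Q[t,t^{-1}]$-module whose order is the one-variable Alexander polynomial $\Delta(t)$ of $\ML$ (up to units). The double cover $\widetilde X$ corresponds to the subgroup $2\Z \subset \Z$, and there is an exact sequence of the form
\[
H_1(\widetilde{X}_\infty) \xrightarrow{\ t^2 - 1\ } H_1(\widetilde{X}_\infty) \longrightarrow H_1(\widetilde X) \longrightarrow H_0(\widetilde{X}_\infty) \xrightarrow{\ t^2-1\ } H_0(\widetilde{X}_\infty),
\]
from which $H_1(\widetilde X;\Q)$ is identified (up to the $H_0$ contribution, which accounts for the two distinct lifts of a meridian) with the cokernel of $t^2-1$, i.e. with $H_1(\widetilde{X}_\infty)/(t^2-1)$. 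Evaluating the order of this module, the rational first Betti number of $\widetilde X$, hence of $M$ after filling, jumps precisely when $t^2-1$ fails to act injectively, which by the theory of torsion modules over $\Q[t,t^{-1}]$ happens exactly when $\Delta$ shares a root with $t^2-1$, i.e. when $\Delta(1)=0$ or $\Delta(-1)=0$; since $\Sigma$ is an integral homology sphere one has $\Delta(1)=\pm 1 \ne 0$, leaving $\Delta(-1)=0$ as the only possibility. When $\Delta(-1)\ne 0$, multiplication by $-1$ on the $t$-eigenspace decomposition shows $|H_1(M;\Z)| = |\Delta(-1)|$ after carefully checking that the solid-torus fillings kill exactly the classes they should and introduce no new torsion; this is the classical formula of Fox for the order of $H_1$ of a branched cyclic cover, adapted to links. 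Finally, the statement $\tau_* = -1$ on $H_1(M)$ follows because $\tau$ corresponds to the deck transformation $t$ of the double cover $\widetilde X$, and on the $(-1)$-eigenspace of the $\Z/2$-action (equivalently the $t=-1$ eigenspace of the infinite cyclic cover), $t$ acts by $-1$; the $(+1)$-eigenspace is the image of $H_1(X;\Z)$, which is all meridional and gets killed in $M$ since meridians of the branch locus are nullhomotopic (order two, then trivial) in $M$.

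The main obstacle I anticipate is bookkeeping the two Dehn fillings correctly — making precise how $H_1(\widetilde X;\Z)$ changes when the two lifted boundary tori of $\widetilde X$ are capped off to form $M$, and in particular verifying that no spurious $\Z$ or torsion summand survives and that the resulting order is exactly $|\Delta(-1)|$ rather than a divisor or multiple. A clean way to handle this is to work with the Mayer--Vietoris sequence for $M = \widetilde X \cup (\text{two solid tori})$ together with the $\tau$-equivariant splitting into $(\pm 1)$-eigenspaces, reducing everything to the $(-1)$-eigenspace where the comparison with the Alexander module $H_1(\widetilde{X}_\infty)/(t+1)$ is cleanest; alternatively one can cite Kawauchi's survey or Gordon's account of branched cyclic covers directly, since the Betti-number criterion and the order formula $|H_1| = |\Delta(-1)|$ are standard there. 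Given the paper's stated intent to postpone topological proofs to Section \ref{S:top}, I would expect the actual write-up to be brief and to lean on such references, with the $\tau_* = -1$ assertion proved by the eigenspace argument above.
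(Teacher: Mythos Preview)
Your proposal is correct and takes essentially the same route as the paper: both arguments pass through the infinite cyclic cover $E$ of the link exterior and identify $H_1(M)$ with a quotient of the Alexander module, from which the Betti-number dichotomy, the order formula $|H_1(M)| = |\Delta(-1)|$, and the assertion $\tau_* = -1$ all follow. You even correctly anticipated that the paper would simply cite Kawauchi rather than spell out the machinery.

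The one place where the paper is slightly cleaner than your sketch is the $\tau_* = -1$ statement. You argue via a $(\pm 1)$-eigenspace decomposition together with tracking the Dehn fillings, and you flag this bookkeeping as the main obstacle. The paper sidesteps all of it by invoking Kawauchi's isomorphism of $\Z[t,t^{-1}]$-modules
\[
H_1(M) \;\cong\; H_1(E)\big/ (1+t)\,H_1(E),
\]
which already incorporates the effect of the branched filling. Since $t$ acts as $-1$ on this quotient by construction, $\tau_* = -1$ is immediate, with no separate eigenspace or Mayer--Vietoris argument needed. Your two-step version (first $t^2-1$ to get the unbranched cover $\widetilde X$, then fill) is equivalent but factors $(t^2-1) = (t-1)(t+1)$ and handles the $(t-1)$-piece by hand via the meridians; the single quotient by $(1+t)$ packages both steps at once.
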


\begin{proof}
This is essentially proved in Kawauchi \cite[Section 5.5]{K}. The statement about $\tau_*$ follows from an isomorphism of $\Z[t,t^{-1}]$ modules $H_1 (M) = H_1(E)/(1+t) H_1(E)$, where $E$ is the infinite cyclic cover of $X$, established in \cite[Theorem 5.5.1]{K}. A completely different proof for the special case of double branched covers of $S^3$ with branch set a knot can be found in Ruberman \cite[Lemma 5.5]{R:slice}.
\end{proof}

\begin{prop}\label{P:H2}
Let $M$ be the double branched cover of an integral homology sphere with branch set a two-component link. Then $H_i (M;\Z/2) = H^i (M;\Z/2)$ is isomorphic to $\Z/2$ if $i = 0, 1, 2, 3$, and is zero otherwise. The cup-product $H^1(M;\Z/2) \times H^1 (M;\Z/2) \to H^2 (M;\Z/2)$ is given by the linking number $\lk(\ell_1,\ell_2)\pmod 2$.
\end{prop}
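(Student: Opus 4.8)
The proposition has two parts --- the computation of the groups and the identification of the cup product --- and I would treat them in turn.

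\emph{The groups.} $M$ is a closed connected orientable $3$-manifold, so $H_0(M;\Z/2)\cong H_3(M;\Z/2)\cong\Z/2$, all other groups vanish, and Poincar\'e duality over the field $\Z/2$ together with the universal coefficient theorem give $H_2(M;\Z/2)\cong H^1(M;\Z/2)\cong H_1(M;\Z/2)$. It therefore suffices to prove $\dim_{\Z/2}H_1(M;\Z/2)=1$. When $\Delta(-1)=0$ this is immediate from Proposition \ref{P:kawauchi}, since then $H_1(M;\Z)=\Z$. In general I would argue directly from the exterior: $X=\Sigma-\Int N(\ML)$ has $\Z/2$-Betti numbers $1,2,1,0$ in degrees $0,1,2,3$ (recall $H_1(X;\Z)=\Z^2$ and $H_2(X;\Z)=\Z$), the connected double cover $\widetilde X\to X$ is classified by the class $w\in H^1(X;\Z/2)$ sending both meridians $\mu_1,\mu_2$ to the generator, and feeding this into the Gysin (transfer) sequence computes $H_*(\widetilde X;\Z/2)$; a Mayer--Vietoris argument reattaching the two solid-torus neighborhoods of $\ML$ along the curves $\pi^{-1}(\mu_i)$ then yields $\dim_{\Z/2}H_1(M;\Z/2)=1$, regardless of the parity of $\lk(\ell_1,\ell_2)$. (This also follows from Kawauchi's description of $H_1(M;\Z)$.)

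\emph{The cup product.} Write $H^1(M;\Z/2)=\Z/2=\langle v\rangle$. Since all three groups in the pairing are one-dimensional over $\Z/2$, it is determined by the value $v\cup v\in H^2(M;\Z/2)$, and since the Poincar\'e duality pairing $H^1(M;\Z/2)\times H^2(M;\Z/2)\to H^3(M;\Z/2)=\Z/2$ is perfect, $v\cup v\neq 0$ if and only if $\langle v^3,[M]\rangle\neq 0$. I would compute $\langle v^3,[M]\rangle$ as a mod $2$ Euler characteristic. First, for any closed surface $S\subset M$ with $[S]=\PD(v)$, one has $\langle v^3,[M]\rangle=[S]\cdot[S]\cdot[S]=\langle w_1(TS)^2,[S]\rangle$: pushing $S$ off itself, the self-intersection curve is the zero locus of a generic section of the normal bundle $\nu_S$, hence dual in $S$ to $w_1(\nu_S)=w_1(TS)$ (using $w_1(TM)=0$), and one further intersection with $S$ contributes $\langle w_1(\nu_S)^2,[S]\rangle$; by the Wu relation $w_1(TS)^2=w_2(TS)$ on a closed surface this equals $\chi(S)\bmod 2$. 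Second, I take $S=\pi^{-1}(F)$, where $F\subset\Sigma$ is a Seifert surface for $\ell_2$ transverse to $\ell_1$: since $\partial F=\ell_2$ lies in the branch set, $\pi^{-1}(F)$ is closed, and $\pi^{-1}(F)\to F$ is a double cover branched over the finite set $F\cap\ell_1$ (the branching along $\ell_2$ not affecting $\chi$), so $\chi(\pi^{-1}(F))=2\chi(F)-\#(F\cap\ell_1)\equiv\lk(\ell_1,\ell_2)\pmod 2$ by Riemann--Hurwitz.

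Finally one must check that $[\pi^{-1}(F)]$ equals the nonzero element $\PD(v)$ of $H_2(M;\Z/2)=\Z/2$. For this I would exhibit a loop in $M$ with odd mod $2$ intersection number with $\pi^{-1}(F)$: the loop $\mu_1\mu_2\subset X$ lifts to $M$ (because $w(\mu_1\mu_2)=0$), and over each point of $(\mu_1\mu_2)\cap F$ a chosen lift meets $\pi^{-1}(F)$ in exactly one point, so this intersection number is $\#((\mu_1\mu_2)\cap F)\equiv\lk(\mu_1\mu_2,\ell_2)=1\pmod 2$. Combining, $v\cup v\neq 0$ exactly when $\lk(\ell_1,\ell_2)$ is odd, which is the claim. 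The step I expect to be the real obstacle is this last one, pinning down the homology class of $\pi^{-1}(F)$, since the self-intersection formula only yields the desired answer for a surface dual to $v$; by comparison the Gysin/Mayer--Vietoris computation of $H_1(M;\Z/2)$ is elementary, requiring only some care with the restrictions $w|_{T_i}$ to the boundary tori and with the classes $[\pi^{-1}(\mu_i)]\in H_1(\widetilde X;\Z/2)$.
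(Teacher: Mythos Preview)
Your proof is correct, and both halves take a genuinely different route from the paper.

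For the groups $H_*(M;\Z/2)$, the paper works directly with the transfer map $\pi_!: C_*(\Sigma)\to C_*(M)$: its kernel is $C_*(\ML)$, giving a short exact sequence $0\to C_*(\Sigma,\ML)\to C_*(M)\to C_*(\Sigma)\to 0$ whose long exact sequence, combined with the long exact sequence of the pair $(\Sigma,\ML)$, determines $H_*(M;\Z/2)$ in one stroke with no case analysis. Your Gysin/Mayer--Vietoris sketch is valid but longer, since the Gysin map $\cup\, w$ on $H^*(X;\Z/2)$ already depends on $\lk(\ell_1,\ell_2)\pmod 2$ and you must then track this through the gluing. (Your aside invoking Proposition~\ref{P:kawauchi} for $\Delta(-1)=0$ is incomplete on its own, since that proposition only gives $b_1(M)=1$, not the absence of $2$-torsion; but your uniform argument covers this case anyway.)

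For the cup product, the paper uses \emph{two} Seifert surfaces $S_1$, $S_2$ of $\ell_1$, $\ell_2$, shows via the transfer map (and a diagram from Lee--Weintraub) that $\pi^{-1}(S_1)$ and $\pi^{-1}(S_2)$ are homologous generators of $H_2(M;\Z/2)$, and then computes their intersection $1$-cycle as $\pi^{-1}(S_1\cap S_2)=\lk(\ell_1,\ell_2)\cdot[\pi^{-1}(w)]$, where $w$ is an arc joining the two link components. Your argument via $v^3=\chi(S)\pmod 2$ for $S=\pi^{-1}(F)$ is a nice characteristic-class shortcut: it bypasses the need to identify the intersection $1$-cycle explicitly, at the cost of the Wu-formula step. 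The ``real obstacle'' you flag --- that $[\pi^{-1}(F)]\neq 0$ --- is exactly what the paper establishes via the transfer sequence; your intersection argument with the lift of $\mu_1\mu_2$ is an equally clean alternative, and in fact yields the generator of $H_1(M;\Z/2)$ as a by-product. The paper's approach has the side benefit of producing explicit generators of $H_1$ and $H_2$ simultaneously (used later, e.g.\ in Lemma~\ref{L:branched}); yours is more self-contained for the cup product alone.
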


The proof of Proposition \ref{P:H2} will be postponed until Section \ref{S:top} for the sake of exposition. 

An important example of $\ML$ to consider is that of the two-component link $\k$ obtained by connect summing a knot $k \subset S^3$ with the Hopf link. The double branched cover $M \to S^3$ in this case is the connected sum $M = Y \,\#\;\RP^3$, where $Y$ is the double branched cover of $k$. Proposition \ref{P:H2} easily follows because $H_*(Y;\Z/2) = H_*(S^3;\Z/2)$.


\subsection{The orbifold exact sequence}\label{S:orb}
We will view $\Sigma = M/\tau$ as an orbifold with the singular set $\ML$. To be precise, the regular double cover $\widetilde X \to X$ is a $3$-manifold whose boundary consists of two tori, and
\[
M = \widetilde X\,\cup_h\,N(\ML),
\]
where the gluing homeomorphism $h: \partial \widetilde X \to \partial N(\ML)$ identifies $\pi^{-1}(\mu_i)$ with the meridian $\mu_i$ for $i = 1, 2$. The involution $\tau: M \to M$ acts by meridional rotation on $N(\ML)$, thereby fixing the link $\ML$, and by covering translation on $\widetilde X$. Define the orbifold fundamental group
\begin{equation*}
\OF = \pi_1 X\,\big/\,\langle {\mu_1}^2 = {\mu_2}^2 = 1\rangle
\end{equation*}
then the homotopy exact sequence of the covering $\widetilde X \to X$ gives rise to a split short exact sequence, called the orbifold exact sequence,
\medskip
\begin{equation}\label{E:orb}
\begin{CD}
1 @>>> \pi_1 M @> \displaystyle{\pi_*} >> \pi_1^{V}(\Sigma, \mathcal{L}) @> \displaystyle{j} > \empty > \Z/2 @>>> 1.
\end{CD}
\end{equation}

\medskip\noindent
The homomorphism $j$ maps the meridians $\mu_1, \mu_2$ to the generator of $\Z/2$ and one obtains a splitting by sending this generator to either $\mu_1$ or $\mu_2$. 

It follows from the definition of the orbifold fundamental group $\OF$ that its abelianization equals $H_1(X)\,\big/\,\langle {\mu_1}^2 = {\mu_2}^2 = 1\rangle = H_1 (X;\Z/2) = \Z/2\,\oplus\,\Z/2$ with the canonical generators $\mu_1$ and $\mu_2$. The homomorphism $\pi_*$ of the orbifold exact sequence \eqref{E:orb} then induces a map $\pi_*: H_1 (M;\Z/2) \to H_1 (X;\Z/2)$ which can be described as follows.

\begin{lemma}\label{L:branched}
The homomorphism $\pi_*: H_1 (M;\Z/2) \to H_1 (X;\Z/2)$ sends the generator of $H_1 (M;\Z/2) = \Z/2$ to the sum of the meridians $\mu_1 + \mu_2 \in H_1 (X;\Z/2)$.
\end{lemma}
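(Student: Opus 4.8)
The plan is to track the generator of $H_1(M;\Z/2) = \Z/2$ through an explicit cycle in $M$ and compute its image under the map $\pi_* \colon H_1(M;\Z/2)\to H_1(X;\Z/2)$ induced by the orbifold exact sequence \eqref{E:orb}. First I would exhibit a geometric representative of the nonzero class in $H_1(M;\Z/2)$. By Proposition \ref{P:H2} this group is $\Z/2$, and in the branched cover $M = \widetilde X \cup_h N(\ML)$ the most natural candidate is a loop built from an arc $\widetilde\delta$ in $\widetilde X$ together with arcs in the solid tori $N(\ell_1)$ and $N(\ell_2)$. Concretely, pick a path in $X$ from a point on the torus $\partial N(\ell_1)$ to a point on $\partial N(\ell_2)$; its preimage in $\widetilde X$, completed by the cores (or meridional arcs) of the two solid tori in $M$, closes up to a loop $\widetilde\gamma$ in $M$ because each meridian $\mu_i$ becomes null-homotopic after the $2$-fold branching (it bounds a meridian disk in $N(\ell_i)$, run over twice). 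One must check that $[\widetilde\gamma]$ is nonzero in $H_1(M;\Z/2)$ rather than zero: this follows because under the covering translation $\tau$ it is sent to its "other lift", and its class is detected by the nontrivial double cover $\widetilde X\to X$; equivalently, one verifies it pairs nontrivially with the generator of $H^1(M;\Z/2)$, the latter being pulled back from $H^1(X;\Z/2)$ under the map dual to $j$ in \eqref{E:orb}.

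Next I would compute $\pi_*[\widetilde\gamma]$. The map $\pi\colon M\to \Sigma$ restricted to $\widetilde X$ is the covering projection $\widetilde X\to X$, so $\pi_*$ of the arc part of $\widetilde\gamma$ lying in $\widetilde X$ projects to a path in $X$; the two meridional arcs in $N(\ell_1)$ and $N(\ell_2)$ project, after the branching, to arcs meeting $\ell_1$ and $\ell_2$ transversally in one point each. When this projected $1$-chain is closed up in $X$ (pushing off the branch locus), it becomes a loop whose homology class in $H_1(X;\Z/2) = \Z/2\oplus\Z/2$ is exactly $\mu_1 + \mu_2$: each solid-torus piece contributes one meridian because the loop crosses the corresponding surgery torus once. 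Algebraically, this is just the statement that in \eqref{E:orb} the composite $H_1(M;\Z/2)\xrightarrow{\pi_*} H_1(X;\Z/2)$ lands in $\ker\big(H_1(X;\Z/2)\to \Z/2\big)$ — the kernel of the map sending each $\mu_i$ to $1$ — since $\pi_1 M$ is the kernel of $j$, and this kernel is precisely $\Z/2\cdot(\mu_1+\mu_2)$. Combined with the fact that $\pi_*$ is nonzero on $H_1(M;\Z/2)=\Z/2$ (which the first paragraph establishes), this forces the image to be $\mu_1+\mu_2$.

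I expect the main obstacle to be the nonvanishing claim: showing that the chosen loop $\widetilde\gamma$ is genuinely nontrivial in $H_1(M;\Z/2)$, equivalently that $\pi_*$ is not the zero map. One clean way around this is to argue purely at the level of group homology using \eqref{E:orb}: from the split exact sequence $1\to \pi_1 M\to \OF\to \Z/2\to 1$ and the identification of the abelianization of $\OF$ with $H_1(X;\Z/2)=\Z/2\oplus\Z/2$ (generated by $\mu_1,\mu_2$), one sees that the image of $\pi_1 M$ in this abelianization is the index-two subgroup $\ker j = \langle \mu_1\mu_2\rangle$, which is nontrivial; since $H_1(M;\Z/2)\to \Z/2\oplus\Z/2$ has image in this subgroup and (again by Proposition \ref{P:H2}) $H_1(M;\Z/2)=\Z/2$, the map is either zero or an isomorphism onto $\langle \mu_1\mu_2\rangle$. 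Ruling out the zero case then reduces to checking that the generator of $H_1(M;\Z/2)$ is not in the image of $\pi_1(\text{double cover of a sphere})$-type phenomena, i.e.\ to the geometric construction of $\widetilde\gamma$ above, or alternatively to a transfer/Gysin argument for the double cover $\widetilde X\to X$. Once nonvanishing is in hand, everything else is a short deduction.
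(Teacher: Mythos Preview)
Your proposal is correct and follows essentially the same geometric route as the paper: both construct the same explicit generator of $H_1(M;\Z/2)$ --- the circle $\pi^{-1}(w)$ obtained by lifting an arc $w$ in $\Sigma$ joining the two link components, which is exactly your loop $\widetilde\gamma$ --- and both push it into $\widetilde X$ and project to $X$ to read off $\mu_1+\mu_2$.

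There is one genuine difference worth noting. The paper computes the image $\mu_1+\mu_2$ directly by tracking the cycle through the commutative square relating $H_1(\widetilde X;\Z/2)\to H_1(X;\Z/2)$ and $H_1(M;\Z/2)\to H_1(X;\Z/2)$. Your algebraic observation that the image of $\pi_*$ must land in $\ker\big(j_*\colon H_1(X;\Z/2)\to\Z/2\big)=\langle\mu_1+\mu_2\rangle$ is a clean shortcut not present in the paper, and reduces the whole lemma to the single claim that $\pi_*$ is nonzero. On the other hand, the paper handles that nonvanishing more decisively than your sketch: it simply cites the transfer computation (Proposition~\ref{P:cup}) showing that $\pi^{-1}(w)$ generates $H_1(M;\Z/2)$, whereas your suggested arguments for nonvanishing (pairing with a cohomology class ``pulled back under the map dual to $j$'', or invoking $\tau$) are vague as stated --- for instance, $\widetilde\gamma$ is $\tau$-invariant as a set, so the covering translation alone does not detect it. If you want a self-contained proof, either carry out the transfer argument you allude to, or observe that the intersection number of $\widetilde\gamma$ with the closed surface $\pi^{-1}(S_i)$ (the lift of a Seifert surface for $\ell_i$) is odd.
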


\begin{proof}
That $H_1 (M;\Z/2) = \Z/2$ follows from Proposition \ref{P:H2}. An explicit generator of this group is described in the proof of Proposition \ref{P:cup} as the circle $\pi^{-1} (w)$, where $w$ is an embedded arc in $\Sigma$ with endpoints on the two different components of $\ML$. The commutative diagram
\begin{equation*}
\begin{tikzpicture} [scale =1]
\draw(1,1) node(a) {$\pi_1 M$ };
\draw(5,1) node(b) {$\OF$};
\draw(1,3) node(c) {$\pi_1 \widetilde X$};
\draw(5,3) node(d) {$\pi_1 X$};
\draw[->](a)--(b) node [midway,above](TextNode){$\pi_*$};
\draw[->](c)--(d) node [midway,above](TextNode){$\pi_*$};
\draw[->](c)--(a) ; 
\draw[->](d)--(b) ;
\end{tikzpicture}
\end{equation*}
gives rise to the commutative diagram in homology
\begin{equation*}
\begin{tikzpicture} [scale =1]
\draw(1,1) node(a) {$H_1(M;\Z/2)$ };
\draw(5,1) node(b) {$H_1(X;\Z/2)$};
\draw(1,3) node(c) {$H_1(\widetilde X;\Z/2)$};
\draw(5,3) node(d) {$H_1(X;\Z/2)$};
\draw[->](a)--(b) node [midway,above](TextNode){$\pi_*$};
\draw[->](c)--(d) node [midway,above](TextNode){$\pi_*$};
\draw[->](c)--(a) ; 
\draw[->](d)--(b) ; 
\end{tikzpicture}
\end{equation*}
The cycle $\pi^{-1} (w)$ in $M$ is homologous to a cycle in $\widetilde X$ which consists of the two arcs $\pi^{-1} (w)\,\cap\, \widetilde X$ whose endpoints on each of the tori in $\p \widetilde X$ are connected by an arc. The map $\pi_*: H_1 (\widetilde X;\Z/2) \to H_1 (X;\Z/2)$ takes the homology class of this cycle to $\mu_1 + \mu_2$ and the result follows.
\end{proof}


\subsection{Pulled back bundles}\label{S:bundles}
Let $P \to \Sigma$ be the orbifold $SO(3)$ bundle used in the definition of $I_*(\Sigma,\ML)$ in Section \ref{S:km}. It pulls back to an orbifold $SO(3)$ bundle $Q \to M$ because the projection map $\pi: M \to \Sigma$ is regular in the sense of Chen--Ruan \cite{CR}. The bundle $Q$ is in fact smooth because orbifold connections on $P$ with order-two holonomy along the meridians of $\ML$ lift to connections in $Q$ with trivial holonomy along the meridians of the two-component link $\tilde \ML = \pi^{-1} (\ML)$. 

\begin{prop}\label{P:Q}
The bundle $Q \to M$ is non-trivial.
\end{prop}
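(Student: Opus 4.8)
The plan is to detect the non-triviality of $Q \to M$ by computing its second Stiefel--Whitney class $w_2(Q) \in H^2(M;\Z/2) = \Z/2$ and showing it is non-zero. Since an $SO(3)$ bundle over a $3$-complex is trivial if and only if $w_2 = 0$ (and $w_3$ is the Bockstein of $w_2$, hence also vanishes), establishing $w_2(Q) \ne 0$ suffices. First I would use naturality: $Q = \pi^* P$ as orbifold bundles, so $w_2(Q) = \pi^* w_2(P)$, where $\pi_*: H_1(M;\Z/2) \to H_1(X;\Z/2)$ (equivalently, on $\pi_1^V$) is the map computed in Lemma~\ref{L:branched} and $w_2(P)$ is the distinguished non-zero class in $H^2(X;\Z/2) = \Z/2$ that evaluates non-trivially on each boundary torus of $X$.

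The key computation is then to evaluate $w_2(Q) = \pi^*w_2(P)$ on the generator of $H_2(M;\Z/2) = \Z/2$. Here I would use the cup-product structure from Proposition~\ref{P:H2}: $H^2(M;\Z/2)$ is generated by $u \cup u$ where $u$ is the generator of $H^1(M;\Z/2)$, provided $\lk(\ell_1,\ell_2)$ is odd; if $\lk(\ell_1,\ell_2)$ is even one must argue slightly differently, producing a surface generator of $H_2(M;\Z/2)$ directly. Dually, by Lemma~\ref{L:branched} the class $u = \PD(\pi^{-1}(w))$ satisfies $\pi_* u^{\vee} = \mu_1 + \mu_2$, so $\langle \pi^* w_2(P), [\text{gen of } H_2(M)]\rangle$ is computed from $\langle w_2(P), \mu_1 \cup \mu_2 \rangle$ or from the evaluation of $w_2(P)$ on the image surface in $X$. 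Since $w_2(P)$ is non-zero precisely because it pairs non-trivially with the boundary tori, and the relevant $\Z/2$-homology class in $X$ hit by $\pi_*$ is (a multiple of) a boundary torus, one gets a non-zero pairing. Concretely: restrict $Q$ to the preimage in $M$ of a small disk $D_i \subset \Sigma$ meeting $\ell_i$ transversally in one point — its preimage is $S^2$, mapping $2$-to-$1$ to the orbifold $D_i$; the holonomy data identify $Q|_{S^2}$ as the non-trivial $SO(3)$ bundle on $S^2$, and this $S^2$ carries a non-zero $\Z/2$-homology class in $M$.

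The cleanest route, which I would actually carry out, is the local one: I would exhibit an explicit $2$-sphere $S \subset M$ on which $Q$ restricts non-trivially. Take $\Sigma = S^3$ (the general homology-sphere case being identical near the branch locus) and let $F \subset \Sigma$ be a disk or sphere that the orbifold bundle $P$ restricts non-trivially to — the standard model is that $P$ restricted to the $2$-sphere linking a meridian is the non-trivial orbifold $SO(3)$ bundle with one cone point, and its double branched cover is the genuine non-trivial $SO(3)$ bundle on $S^2$. Then $S = \pi^{-1}(F)$ satisfies $w_2(Q)[S] = w_2(Q|_S) \ne 0$, so $Q$ is non-trivial.

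The main obstacle I anticipate is purely bookkeeping: matching up the orbifold bundle conventions of Kronheimer--Mrowka (cone-angle-$\pi$ metric, order-two meridional holonomy, the "check" gauge group) with the smooth pull-back $Q$, and making sure the chosen $2$-cycle in $M$ genuinely represents the non-zero class of $H_2(M;\Z/2)$ rather than the zero class — this is exactly where Lemma~\ref{L:branched} and Proposition~\ref{P:H2} do the work. Once the class is correctly identified, the non-vanishing of the pairing is immediate from the defining property of $w_2(P)$.
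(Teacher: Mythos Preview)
Your proposal has a genuine gap. The naturality relation $w_2(Q)=\pi^*w_2(P)$ only makes sense on the complements, since $w_2(P)$ lives in $H^2(X;\Z/2)=H^2(\Sigma-\ML;\Z/2)$ and not in $H^2(\Sigma;\Z/2)=0$. Thus what you can actually compute by naturality is the restriction $w_2(Q)\big|_{M-\tilde\ML}=\pi^*\big(w_2(P)\big)\in H^2(M-\tilde\ML;\Z/2)$. The paper shows, via the Gysin sequence, that this class is \emph{zero} precisely when $\lk(\ell_1,\ell_2)$ is odd: the map $\cup\,w_1$ in the Gysin sequence is governed by the cup product on $H^*(\Sigma-\ML;\Z/2)$, which is the linking number mod~$2$, and when that product is nontrivial $\pi^*$ fails to be injective and kills $w_2(P)$. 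So the ``evaluate $\pi^*w_2(P)$ on a generator'' strategy cannot succeed in the odd case; the nontriviality of $w_2(Q)\in H^2(M;\Z/2)$ comes from information that is invisible after restricting to $M-\tilde\ML$.

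Your ``local sphere'' argument does not rescue this. The preimage in $M$ of a small disk $D_i\subset\Sigma$ meeting $\ell_i$ transversally in one point is the branched double cover of a disk with one branch point, which is again a disk, not $S^2$; there is no $2$-sphere linking a knot component in a $3$-manifold. More to the point, any small $2$-sphere in $M$ bounds a ball and is null-homologous, so cannot detect $w_2(Q)$. What the paper does instead in the odd-linking case is to produce an explicit flat connection on $P$ with holonomy $\alpha:\OF\to SO(3)$ sending the meridians to $\Ad i$ and $\Ad j$, pull it back to a representation $\pi^*\alpha:\pi_1 M\to\Z/2\oplus\Z/2\subset SO(3)$, check $\pi^*\alpha$ is nontrivial, and then compute $w_2(\pi^*\alpha)=\xi\cup\xi$ for a nonzero $\xi\in H^1(M;\Z/2)$; Proposition~\ref{P:H2} (the odd linking number gives a nontrivial cup square) then finishes the argument. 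Your outline in the even-linking case is closer to the paper's Gysin-sequence argument, but you should be aware that the two parities genuinely require different mechanisms.
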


The rest of this section is dedicated to the proof of this proposition. We will accomplish it by showing the non-vanishing of $w_2 (Q) \in H^2 (M;\Z/2) = \Z/2$. Our argument will split into two cases, corresponding to the parity of the linking number between the components of $\ML$.

Suppose that $\lk (\ell_1,\ell_2)$ is even and consider the regular double cover $\pi: M - \widetilde{\ML} \rightarrow \Sigma - \ML$. It gives rise to the Gysin exact sequence
\smallskip
\begin{equation*}
\begin{tikzpicture}
\draw (1.2,2) node (k) {};
\draw (3.3,2) node (l) {$H^{1}(\Sigma - \ML;\Z/2)$};
\draw (7.5,2) node (m) {$H^{2}(\Sigma - \ML;\Z/2)$};
\draw (11.5,2) node (n) {$H^{2}(M - \widetilde{\ML};\Z/2)$};
\draw (13.5,2) node(o) {};
\draw[->](k)--(l);
\draw[->](l)--(m) node [midway,above](TextNode){$\cup \, w_1$};
\draw[->](m)--(n) node [midway,above](TextNode){$\pi^{*}$};
\draw[->](n)--(o);
\draw (1.2,1) node (p) {};
\draw (3.3,1) node (q) {$H^{2}(\Sigma - \ML;\Z/2)$};
\draw (7.5,1) node (r) {$H^{3}(\Sigma - \ML;\Z/2)$};
\draw (10.3,1) node (s) {$\cdots$};
\draw[->](p)--(q);
\draw[->](q)--(r) node [midway,above](TextNode){$\cup \, w_1$};
\draw[->](r)--(s);
\end{tikzpicture}
\end {equation*}
where $\cup\, w_1$ means taking the cup-product with the first Stiefel--Whitney class of the cover. The cup-product on $H^*(\Sigma-\ML;\Z/2)$ can be determined from the following commutative diagram

\begin{equation*}
\begin{tikzpicture} [scale =1]
\draw(1,1) node(a) {$H^{1}(\Sigma-\ML;\Z/2) \times H^{1}(\Sigma-\ML;\Z/2)$ };
\draw(6.5,1) node(b) {$H^{2}(\Sigma-\ML;\Z/2)$};
\draw(1,3) node(c) {$H_{2}(\Sigma,\ML;\Z/2) \times H_{2}(\Sigma,\ML;\Z/2) $};
\draw(6.5,3) node(d) {$H_{1}(\Sigma,\ML;\Z/2)$};
\draw[->](a)--(c) node [midway,left](TextNode){PD};
\draw[->](b)--(d) node [midway,right](TextNode){PD};
\draw[->](a)--(b) node [midway, above] (TextNode) {$\cup$};
\draw[->](c)--(d) node [midway, above] (TextNode) {$\cdot$};
\end{tikzpicture}
\end{equation*}

\medskip\noindent
where $\PD$ stands for the Poincar\'e duality isomorphism and the dot in the upper row for the intersection product. Note that Seifert surfaces of knots $\ell_1$ and $\ell_2$ generate $H_2 (\Sigma,\ML; \Z/2) = \Z/2\,\oplus\,\Z/2$, and any arc in $\Sigma$ with one endpoint on $\ell_1$ and the other on  $\ell_2$ generates $H_1 (\Sigma,\ML; \Z/2) = \Z/2$. An easy calculation shows that, with respect to these generators, the intersection product is given by the matrix 

\begin{equation*}
\begin{pmatrix}
0 & \lk(\ell_1, \ell_2) \\
\lk(\ell_1,\ell_2) & 0 \\
\end{pmatrix}
\end{equation*}

\medskip\noindent
Since $\lk(\ell_1,\ell_2)$ is even, this gives a trivial cup product structure on the link complement $\Sigma - \ML$. Therefore, the map $\cup\,w_1$ in the Gysin sequence is zero and the map $\pi^*: H^{2}(\Sigma - \ML;\Z/2) \to H^{2}(M - \widetilde{\ML};\Z/2)$ is injective. Since $w_2(P) \in H^2 (\Sigma - \ML;\Z/2)$ is non-zero we conclude that $\pi^{*}(w_2(P)) \neq 0$. This implies that $w_2 (Q) \neq 0$ because $Q = \pi^* P$ over $M - \widetilde\ML$.

Now suppose that $\lk(\ell_1,\ell_2)$ is odd. The above calculation implies that the second Stiefel--Whitney class of $\pi^* P$ vanishes in $H^2(M - \widetilde\ML;\Z/2)$. We will prove, however, that $w_2 (Q) \in H^2 (M;\Z/2)$ is non-zero, by showing that $Q$ carries a flat connection with non-zero $w_2$.

Note that the orbifold bundle $P$ carries a flat $SO(3)$ connection whose holonomy is a representation $\alpha : \OF \to SO(3)$ of the orbifold fundamental group $\OF = \pi_1 X/\langle{\mu_1}^2 = {\mu_2}^2 = 1\rangle$ sending the two meridians to $\Ad i$ and $\Ad j$. This flat connection pulls back to a flat connection on $Q$ with holonomy $\pi^*\alpha: \pi_1 M \to SO(3)$. We wish to compute the second Stiefel--Whitney class of $\pi^*\alpha$. 

\begin{lemma}\label{L:pi}
The representation $\pi^*\alpha : \pi_1 M \to \Z/2\,\oplus\,\Z/2$  is non-trivial.
\end{lemma}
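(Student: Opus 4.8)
The plan is to compute $\pi^*\alpha$ as the composition
\[
\pi_1 M \xrightarrow{\;\pi_*\;} \OF \xrightarrow{\;\alpha\;} SO(3),
\]
where, since we are only asked about the reduction to $\Z/2\,\oplus\,\Z/2$, it suffices to track the image of $\alpha$ in the abelianization mod $2$. Because $\alpha$ sends the two meridians $\mu_1,\mu_2$ to the order-two rotations $\Ad i$ and $\Ad j$, the composite $\OF \to SO(3) \to \Z/2\,\oplus\,\Z/2$ factors through the abelianization $H_1(X;\Z/2) = \Z/2\,\oplus\,\Z/2$ and is in fact an isomorphism there, identifying $\mu_1$ and $\mu_2$ with the two canonical generators (the images $\Ad i$ and $\Ad j$ generate the diagonal $\Z/2\,\oplus\,\Z/2 \subset SO(3)$). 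Hence $\pi^*\alpha$, viewed as a map to $\Z/2\,\oplus\,\Z/2$, is identified with the map $\pi_*: H_1(M;\Z/2) \to H_1(X;\Z/2)$ of Lemma \ref{L:branched}, up to this fixed isomorphism on the target.

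The key step is then to invoke Lemma \ref{L:branched}: $\pi_*$ sends the generator of $H_1(M;\Z/2)=\Z/2$ to $\mu_1 + \mu_2$. Under the identification above, $\mu_1+\mu_2$ corresponds to the product $\Ad i \cdot \Ad j = \Ad(ij) = \Ad k$, which is a nontrivial element of $SO(3)$ (indeed a nontrivial element of $\Z/2\,\oplus\,\Z/2$). Therefore $\pi^*\alpha$ carries the generator of $\pi_1 M$'s abelianization mod $2$ to a nonzero element, so $\pi^*\alpha: \pi_1 M \to \Z/2\,\oplus\,\Z/2$ is nontrivial.

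I expect the only real subtlety to be bookkeeping: making sure that the codomain of $\pi^*\alpha$ really is the copy of $\Z/2\,\oplus\,\Z/2$ sitting inside $SO(3)$ generated by $\Ad i,\Ad j,\Ad k$ (equivalently, checking that the image of $\alpha$ lands in this Klein four-group — which holds because $\alpha$ is the holonomy of the specific flat connection on $P$ whose meridional holonomies are $\Ad i$ and $\Ad j$, and in the odd linking number case one verifies directly that such a representation of $\OF$ exists and has abelian image of this form), and then reading off the product $\mu_1+\mu_2 \mapsto \Ad k \neq 1$ correctly through Lemma \ref{L:branched}. No genuinely hard analysis is involved; the statement is a formal consequence of the homological computation already carried out.
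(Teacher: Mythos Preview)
Your argument is correct, but it takes a different route from the paper's. The paper gives a two-line contradiction argument using only the orbifold exact sequence \eqref{E:orb}: if $\pi^*\alpha$ were trivial then $\alpha$ would factor through $\OF/\pi_*(\pi_1 M)\cong\Z/2$, contradicting the fact that $\alpha$ surjects onto $\Z/2\oplus\Z/2$. Your approach instead invokes the explicit homological computation of Lemma~\ref{L:branched} to evaluate $\pi^*\alpha$ on the generator of $H_1(M;\Z/2)$ and read off $\Ad k\neq 1$. The paper's argument is shorter and avoids the dependency on Lemma~\ref{L:branched}; your argument, on the other hand, actually computes $\pi^*\alpha$ rather than merely showing it is nonzero, which is precisely the information the paper extracts in the paragraph immediately following the lemma (the determination of $\xi$ and $\eta$). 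One small wording issue: there is no map $SO(3)\to\Z/2\oplus\Z/2$; what you mean, and what the paper means by the codomain in the statement, is that $\alpha$ factors through the abelianization $\OF\to H_1(X;\Z/2)\cong\Z/2\oplus\Z/2$ followed by the embedding $\mu_1\mapsto\Ad i$, $\mu_2\mapsto\Ad j$, so its image lies in the Klein four-subgroup of $SO(3)$.
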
 

\begin{proof}
Our proof will rely on the orbifold exact sequence \eqref{E:orb}. Assume that $\pi^* \alpha$ is trivial. Then $\pi_1 M \subset \ker (\pi^*\alpha)$ hence $\alpha$ factors through a homomorphism $\OF / \pi_*(\pi_1 M) \to \Z/2\,\oplus\,\Z/2$. Since $\OF / \pi_*(\pi_1 M) = \Z/2$ we obtain a contradiction with the surjectivity of $\alpha$.
\end{proof}

Since the group $\Z/2\,\oplus\,\Z/2$ is abelian, the representation $\pi^*\alpha: \pi_1 M \to \Z/2\,\oplus\,\Z/2$ factors through a homomorphism $H_1(M) \to \Z/2\,\oplus\,\Z/2$ which is uniquely determined by its two components $\xi$, $\eta \in \Hom(H_1(M),\Z/2) = H^1(M;\Z/2) = \Z/2$, see Proposition \ref{P:H2}. A calculation identical to that in \cite[Proposition 4.3]{RSI} shows that $w_2(\pi^*\alpha) = \xi\,\cup\,\xi + \xi\,\cup\, \eta + \eta\,\cup\,\eta$ (note that, unlike in \cite{RSI}, the classes $\xi\,\cup\,\xi$ and $\eta\,\cup\,\eta$ need not vanish). Since $\xi$ and $\eta$ cannot be both trivial by Lemma \ref{L:pi}, we may assume without loss of generality that $\xi \neq 0$. If $\eta = 0$ then $w_2(\pi^*\alpha) = \xi\,\cup\,\xi$. If $\eta \neq 0$ then $\xi = \eta$ due to the fact that $H^{1}(M;\Z/2) = \Z/2$, and therefore again $w_2(\pi^*\alpha) = \xi\,\cup\,\xi$. Since $\lk(\ell_1,\ell_2)$ is odd, it follows from Proposition \ref{P:H2} that $w_2 (\pi^*\alpha) \neq 0$.


\subsection{Pulled back representations}
Assuming that $\ML \subset \Sigma$ is non-split, we identified in Section \ref{S:hol} the critical point set of the Chern--Simons functional \eqref{E:cs} with the space $\PR_c (X,SU(2))$ of the conjugacy classes of  projective representations $\pi_1 X \to SU(2)$ on the link exterior, for any choice of cocycle $c$ not cohomologous to zero. We further identified the quotient of $\PR_c (X,SU(2))$ by the natural $H^1 (X;\Z/2)$ action with the subspace $\MCR_w (X;SO(3))$ of the $SO(3)$ character variety of $\pi_1 X$ cut out by the condition $w_2 \neq 0$. The latter condition implies that both meridians $\mu_1$ and $\mu_2$ are represented by $SO(3)$ matrices of order two, which leads to a natural identification of this subspace with 
\[
{\MCR}_{\omega} (\Sigma, \ML ; SO(3)) = \{\,\rho: \OF \to SO(3))\; |\; w_2(\rho) \neq 0 \,\}/\Ad SO(3),
\]
where the condition $w_2(\rho) \neq 0$ applies to the representation $\rho$ restricted to $X$. To summarize, the group $H^1 (X;\Z/2)$ acts on the space $\PR_c (X,SU(2))$ with the quotient map
\[
\PR_c (X,SU(2)) \longrightarrow {\MCR}_{\omega} (\Sigma, \ML ; SO(3)).
\]
We now wish to study the space ${\MCR}_{\omega} (\Sigma, \ML ; SO(3))$ using representations on the double branched cover $M \to \Sigma$ equivariant with respect to the covering translation $\tau: M \to M$.

\begin{lemma}\label{L:orb}
Let $\rho: \OF \to SO(3)$ be a representation with $w_2 (\rho) \neq 0$, and $\pi^*\rho: \pi_1 M \to SO(3)$ its pull back via the homomorphism $\pi_*$ of the orbifold exact sequence \eqref{E:orb}. Then there exists an element $u \in SO(3)$ of order two such that $\tau^*(\pi^*\rho) = u \cdot (\pi^*\rho) \cdot u^{-1}$.
\end{lemma}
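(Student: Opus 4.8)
The plan is to make the action of the covering translation $\tau$ on $\pi_1 M$ completely explicit via the orbifold exact sequence \eqref{E:orb}, and then to simply evaluate $\rho$ on a meridian.

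First I would use that \eqref{E:orb} is split, with $\pi_*$ identifying $\pi_1 M$ with an index-two normal subgroup of $\OF$ and the generator of $\Z/2$ lifting to the meridian $\mu_1$. Under this identification the deck transformation $\tau: M \to M$ corresponds to the automorphism of $\pi_1 M$ given by conjugation by $\mu_1$, that is, $\tau_*(g) = \mu_1\, g\, \mu_1^{-1}$ for $g \in \pi_1 M$; choosing a different lift of the generator only alters $\tau_*$ by an inner automorphism of $\pi_1 M$, which does not affect a statement formulated up to conjugacy. By the very definition of the orbifold fundamental group one has $\mu_1^2 = 1$ in $\OF$, so $\mu_1^{-1} = \mu_1$ and, in particular, $\tau_*$ is an involution, as it should be.

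Next I would set $u = \rho(\mu_1) \in SO(3)$ and compute, for every $g \in \pi_1 M$,
\[
\bigl(\tau^*(\pi^*\rho)\bigr)(g) \;=\; (\pi^*\rho)(\mu_1\, g\, \mu_1^{-1}) \;=\; \rho(\mu_1)\,\rho(\pi_* g)\,\rho(\mu_1)^{-1} \;=\; u\cdot(\pi^*\rho)(g)\cdot u^{-1},
\]
which is exactly the asserted identity $\tau^*(\pi^*\rho) = u\cdot(\pi^*\rho)\cdot u^{-1}$. It then remains to check that $u$ has order precisely two. From $\mu_1^2 = 1$ we get $u^2 = 1$, so I only need to rule out $u = 1$; but the hypothesis $w_2(\rho) \neq 0$ forces $\rho$ to send $\mu_1$ to an element of $SO(3)$ of order two, since otherwise the restriction of $\rho$ to the corresponding boundary torus of $X$ would be reducible and hence would admit an $SU(2)$ lift, contradicting the non-triviality of its second Stiefel--Whitney class (compare the proof of Lemma \ref{L:irr}). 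Hence $u = \rho(\mu_1)$ has order two.

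The only point requiring care is the first step: pinning down the dictionary between \eqref{E:orb} and the geometry of $M \to \Sigma$, namely that the covering translation is indeed realized by conjugation by $\mu_1$ and that the resulting involution on representations of $\pi_1 M$ coincides with the $\tau^*$ appearing in the statement. Once that is in place the remaining computation is the one displayed above, and I expect no further obstacles.
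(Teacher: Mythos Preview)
Your proposal is correct and follows essentially the same approach as the paper: both identify $\tau_*$ on $\pi_1 M$ with conjugation by a meridian $\mu_i$ inside $\OF$ and then take $u = \rho(\mu_i)$. The paper spends most of its proof on exactly the point you flag as delicate---it carefully tracks basepoints and paths in the unbranched cover $\widetilde X \to X$ to justify that the covering translation corresponds to conjugation by $\mu_i$---whereas you invoke the splitting of \eqref{E:orb} more directly; on the other hand, you give a fuller justification for why $u$ has order exactly two, which the paper leaves implicit.
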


\begin{proof}
Let $\widetilde X \to X$ be the regular double cover as in Section \ref{S:orb}. Choose a basepoint $b$ in one of the boundary tori of $\widetilde X$ and consider the commutative diagram
\begin {equation*}
\begin{tikzpicture}
\draw (1,3) node (a) {$\pi_1 (\widetilde{X},b)$};
\draw (5,3) node (b) {$\pi_1(\widetilde{X},\tau(b))$};
\draw (8,3) node (e) {$\pi_1(\widetilde{X},b)$};
\draw (3,1) node (c) {$\pi_1(X,\pi(b))$};
\draw (8,1) node (d) {$\pi_1(X,\pi(b))$};
\draw[->](a)--(b) node [midway,above](TextNode){$\tau_*$};
\draw[->](a)--(c) node [midway,left](TextNode){$\pi_{*}$};
\draw[->](b)--(c) node [midway,right](TextNode){$\; \pi_{*}$};
\draw[->](e)--(d) node [midway,right](TextNode){$\pi_{*}$};
\draw[->](c)--(d) node [midway,above](TextNode){$\varphi$};
\draw[->](b)--(e) node[midway, above](TextNode){$\psi_f$};
\end{tikzpicture}
\end {equation*}
whose maps $\psi_f$ and $\varphi$ are defined as follows. Given a path $f: [0,1] \to X$ from $b$ to $\tau(b)$, take its inverse $\overline{f}(s) = f(1-s)$ and define the map $\psi_f$ by the formula $\psi_{f} (\beta) = f \cdot \beta \cdot \overline{f}$. Since $\pi(b) = \pi(\tau(b))$, the path $f$ projects to a loop in $X$ based at $\pi(b)$, and the map $\varphi$ is the conjugation by that loop. In fact, one can choose the path $f$ to project onto the meridian $\mu_i$ of the boundary torus on which $\pi(b)$ lies so that $\varphi(x) = \mu_i\cdot x \cdot \mu_i^{-1}$. After filling in the solid tori, we obtain the commutative diagram
\begin{equation*}
\begin{tikzpicture} [scale =1]
\draw(1,1) node(a) {$\OF$ };
\draw(5,1) node(b) {$\OF$};
\draw(1,3) node(c) {$\pi_1 M$};
\draw(5,3) node(d) {$\pi_1 M$};
\draw[->](a)--(b) node [midway,above](TextNode){$\varphi$};
\draw[->](c)--(d) node [midway,above](TextNode){$\tau_{*}$};
\draw[->](c)--(a) node [midway, left] (TextNode) {$\pi_*$};
\draw[->](d)--(b) node [midway, right] (TextNode) {$\pi_*$};
\end{tikzpicture}
\end{equation*}
which tells us that, for any $\rho: \OF \to SO(3)$, the pull back representation $\pi^*\rho$ has the property that $\tau^* (\pi^*\rho) = u\cdot (\pi^*\rho)\cdot u^{-1}$ with $u = \rho(\mu_i)$ of order two.
\end{proof}

\begin{example}\label{E:rp3}
Let $\ML \subset S^3$ be the Hopf link then $M = \RP^3$ and the orbifold exact sequence \eqref{E:orb} takes the form
\[
\begin{tikzpicture}
\draw (1,1) node (a) {$1$};
\draw (3,1) node (b) {$\Z/2$};
\draw (6,1) node (c) {$\Z/2\,\oplus\,\Z/2$};
\draw (9,1) node (d) {$\Z/2$};
\draw (11,1) node (e) {$1$};
\draw[->](a)--(b);
\draw[->](b)--(c) node [midway,above](TextNode){$\pi_{*}$};
\draw[->](c)--(d) node [midway,above](TextNode){$j$};
\draw[->](d)--(e);
\end{tikzpicture}
\]
with the two copies of $\Z/2$ in the middle group generated by the meridians $\mu_1$ and $\mu_2$. Define $\rho: \Z/2\,\oplus\,\Z/2 \to SO(3)$ on the generators by $\rho (\mu_1) = \Ad i$ and $\rho (\mu_2) = \Ad j$; up to conjugation, this is the only representation $\Z/2 \to SO(3)$ with $w_2(\rho) \neq 0$. The pull back representation $\pi^*\rho:  \Z/2 \to SO(3)$ sends the generator to $\Ad i \cdot \Ad j = \Ad k$. Since $\tau^* (\pi^*\rho) = \pi^*\rho$, the identity $\tau^* (\pi^*\rho) =u\cdot (\pi^*\rho) \cdot u^{-1}$ holds for multiple choices of $u$ including the second order $u$ of the form $u = \Ad q$, where $q$ is any unit quaternion such that $- qk = kq$.
\end{example}

Given a double branched cover $\pi: M \to \Sigma$ with branch set $\ML$ and the covering translation $\tau: M \to M$, define
\[
{\MCR}_{\omega}(M; SO(3))\, =\, \{\,\beta: \pi_1 M \to SO(3)\;|\;w_2(\beta)\neq 0\,\}/\Ad SO(3).
\]
Since $w_2 (\tau^* \beta) = w_2 (\beta) \in H^2 (M;\Z/2) = \Z/2$, the pull back of representations via $\tau$ gives rise to a well defined involution 
\begin{equation}\label{E:w-inv}
\tau^*: {\MCR}_{\omega}(M; SO(3)) \longrightarrow {\MCR}_{\omega}(M; SO(3)). 
\end{equation}
Its fixed point set $\Fix (\tau^*)$ consists of the conjugacy classes of representations $\beta: \pi_1 M \to SO(3)$ such that $w_2 (\beta) \neq 0$ and there exists an element $u \in SO(3)$ having the property that $\tau^*\beta = u\cdot \beta\cdot u^{-1}$. Consider the sub-variety
\begin{equation}\label{E:uuu}
\MCR^{\tau}_w (M;SO(3))\;\subset\; \Fix (\tau^*)
\end{equation}
defined by the condition that the conjugating element $u$ can be chosen to be of order two. This sub-variety is well defined because all elements of order two in $SO(3)$ are conjugate to each other. The following proposition is the main result of this section.

\begin{prop}\label{pullback-homeo}
The homomorphism $\pi_*: \pi_1 M \to \OF$ of the orbifold exact sequence \eqref{E:orb} induces via the pull back a homeomorphism
\[
\pi^*: \MCR_{\omega} (\Sigma,\ML; SO(3)) \longrightarrow \MCR^{\tau}_{\omega}(M; SO(3)).
\] 
\end{prop}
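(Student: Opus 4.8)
The plan is to construct an explicit inverse to $\pi^*$ and check it is continuous, using the orbifold exact sequence \eqref{E:orb} together with Lemma \ref{L:orb} and the splitting $j$. First I would verify that $\pi^*$ is well defined as a map into $\MCR^{\tau}_\omega(M;SO(3))$: given $\rho\colon\OF\to SO(3)$ with $w_2(\rho)\neq 0$, Proposition \ref{P:Q} shows $w_2(\pi^*\rho)=w_2(Q)\neq 0$, and Lemma \ref{L:orb} provides a conjugating element $u=\rho(\mu_i)$ of order two, so $[\pi^*\rho]$ indeed lands in the sub-variety \eqref{E:uuu}. Conjugate representations $\rho$ clearly pull back to conjugate representations, so $\pi^*$ descends to the character varieties; continuity is automatic since it is induced by an algebraic map on representation spaces.

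Next I would build the inverse. Fix the splitting $s\colon\Z/2\to\OF$ of \eqref{E:orb} sending the generator to $\mu_1$, so that every element of $\OF$ is uniquely $\pi_*(x)$ or $\pi_*(x)\cdot\mu_1$ for $x\in\pi_1 M$. Given $\beta\colon\pi_1 M\to SO(3)$ with $w_2(\beta)\neq 0$ and $\tau^*\beta=u\beta u^{-1}$ for some order-two $u$, I want to define $\hat\beta\colon\OF\to SO(3)$ by $\hat\beta(\pi_*(x))=\beta(x)$ and $\hat\beta(\mu_1)=u$, extended multiplicatively. The point to check is that this is a well-defined homomorphism: the relations in $\OF$ beyond those of $\pi_1 M$ are $\mu_1^2=1$ (forced, since $u^2=1$) and the conjugation relation $\mu_1\cdot \pi_*(x)\cdot\mu_1^{-1}=\pi_*(\tau_*^{-1}x)$ or its analogue coming from $\tau$ being the covering translation — precisely the content of the commutative square in the proof of Lemma \ref{L:orb}, which identifies $\varphi=\operatorname{conj}_{\mu_i}$ on $\OF$ with $\tau_*$ on $\pi_1 M$. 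Feeding these into $\hat\beta$ reduces exactly to the hypothesis $\tau^*\beta=u\beta u^{-1}$, so $\hat\beta$ is a homomorphism. Then $w_2(\hat\beta)\neq 0$ because its restriction to $\pi_1 M$ equals $w_2(\beta)\neq 0$ (using that $\pi^*\colon H^2(\Sigma-\ML)\to H^2(M-\widetilde\ML)$ behaves as in Section \ref{S:bundles}), so $[\hat\beta]\in\MCR_\omega(\Sigma,\ML;SO(3))$.

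Then I would check the two composites are identities on character varieties. Starting from $\rho$, pulling back and re-extending returns a representation that agrees with $\rho$ on $\pi_1 M$ and sends $\mu_1$ to $\rho(\mu_1)$ — but the ambiguity is that the extension chose $u=\rho(\mu_1)$ up to conjugation by the centralizer of $\beta(\pi_1 M)$, and since $\rho$ is irreducible (Lemma \ref{L:irr}) this centralizer is $\{\pm1\}=\{1\}$ in $SO(3)$, so $u$ is uniquely determined and the composite is the identity. Starting from $\beta$, the composite visibly recovers $\beta$ on $\pi_1 M$, which determines the class in $\MCR^{\tau}_\omega(M;SO(3))$. Finally, continuity of the inverse: the assignment $\beta\mapsto(\beta,u)$ with $u$ uniquely pinned down by irreducibility is algebraic, and $(\beta,u)\mapsto\hat\beta$ is evaluation of a word, so the inverse is a morphism of algebraic sets, hence continuous; thus $\pi^*$ is a homeomorphism.

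The main obstacle will be the well-definedness of $\hat\beta$ — i.e.\ carefully matching the presentation of $\OF$ as an extension of $\Z/2$ by $\pi_1 M$ against the conjugation data $(u,\tau_*)$, and confirming that the relation encoded by the split exact sequence \eqref{E:orb} is \emph{exactly} $\tau^*\beta=u\beta u^{-1}$ with no leftover obstruction (for instance, checking that $u\beta(\tau_* x)u^{-1}=\beta(x)$ is consistent with $\tau^*$ being an involution, i.e.\ $u^2$ centralizes $\beta(\pi_1 M)$, which is where irreducibility and $u^2=1$ are both used). A secondary subtlety is confirming the basepoint issues in Lemma \ref{L:orb} do not obstruct globalizing the conjugation relation over all of $\pi_1 M$ rather than just a generating set, but this is handled by the commutative diagram already established there.
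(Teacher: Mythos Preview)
Your approach is essentially the same as the paper's: verify that $\pi^*$ lands in $\MCR^\tau_\omega$ via Proposition~\ref{P:Q} and Lemma~\ref{L:orb}, then construct the inverse by extending a pair $(\beta,u)$ to a representation of the semidirect product $\OF = \pi_1 M \rtimes \Z/2$ via $\rho(x,t^\ell)=\beta(x)\cdot u^\ell$. The paper's proof is considerably terser --- it writes down exactly this formula and stops, without checking the composites or discussing continuity --- so your outline is, if anything, more thorough.

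There is one slip worth flagging. You argue that the centralizer of $\beta(\pi_1 M)$ is trivial ``since $\rho$ is irreducible (Lemma~\ref{L:irr})'', but Lemma~\ref{L:irr} gives irreducibility of the \emph{extended} representation on $\OF$, not of its restriction $\beta$ to $\pi_1 M$. The restriction can in fact be reducible: for the Hopf link, $\pi^*\alpha$ sends the generator of $\pi_1(\RP^3)$ to $\Ad k$, and its centralizer is a full $SO(2)$. So $u$ is not uniquely determined in general, and the well-definedness of your inverse on conjugacy classes (and hence its continuity via ``$u$ uniquely pinned down'') is not yet justified. The fix is not hard: when $\beta$ is reducible with image in a copy of $SO(2)$, any two admissible order-two $u$'s differ by an element $c$ of that $SO(2)$, and conjugating the extension by a square root of $c^{-1}$ in $SO(2)$ carries one extension to the other. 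The paper's proof does not address this point either.
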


\begin{proof}
Orbifold representations $\OF \to SO(3)$ with non-trivial $w_2$ pull back to representations $\pi_1 M \to SO(3)$ with non-trivial $w_2$, see Section \ref{S:bundles}. In addition, these pull back representations are equivariant in the sense of Lemma \ref{L:orb}. Therefore, the map $\pi^*: \MCR_{\omega} (\Sigma,\ML; SO(3)) \longrightarrow \MCR^{\tau}_{\omega}(M; SO(3))$ is well defined. To finish the proof, we will construct an inverse of $\pi^*$. Given $\beta: \pi_1 M \to SO(3)$ whose conjugacy class belongs to $\MCR^{\tau}_{\omega}(M; SO(3))$, there exists an element $u \in SO(3)$ of order two such that $\tau^* \beta = u\,\cdot\,\beta\,\cdot\,u^{-1}$. The pair $(\beta,u)$ then defines an $SO(3)$ representation of $\OF = \pi_1 M\,\rtimes\,\Z/2$ by the formula $\rho (x,t^{\ell}) = \beta(x)\,\cdot\,u^{\ell}$, where $x \in \pi_1 M$ and $t$ is the generator of $\Z/2$. 
\end{proof}


\subsection{Equivariant index}\label{S:eq-ind}
All orbifolds we encounter in this paper are obtained by taking the quotient of a smooth manifold by an orientation preserving involution. The orbifold elliptic theory on such global quotient orbifolds is equivalent to the equivariant elliptic theory on their branched covers; see for instance \cite{wang}. In particular, the orbifold index of the ASD operator \eqref{E:DA} can be computed as an equivariant index as explained below.

Let $X$ be a smooth oriented Riemannian 4-manifold without boundary, which may or may not be compact. If $X$ is not compact, we assume that its only non-compactness comes from a product end $(0,\infty) \times Y$ equipped with a product metric. Let $\tau: X \to X$ be a smooth orientation preserving isometry of order two with non-empty fixed point set $F$ making $X$ into a double branched cover over $X'$ with branch set $F'$. Let $P \to X$ be an $SO(3)$ bundle to which $\tau$ lifts so that its action on the fibers over the fixed point set of $\tau$ has order two. This lift will be denoted by $\tilde\tau: P \to P$. The quotient of $P$ by the involution $\tilde\tau$ is naturally an orbifold $SO(3)$ bundle $P' \to X'$, and any equivariant connection $A$ in $P$ gives rise to an orbifold connection $A'$ in $P'$. The ASD operator
\[
\DA(X) = -d^*_A\,\oplus\,d^+_A: \;\Omega^1 (X, \ad P) \to (\Omega^0\,\oplus\,\Omega^2_+) (X,\ad P)
\]
associated with $A$ is equivariant in that the diagram
\begin{equation*}
\begin{tikzpicture} [scale =1]
\draw(1,3) node(a) {$\Omega^1 (X,\ad P)$};
\draw(7,3) node(b) {$(\Omega^0\oplus\Omega^2_+) (X,\ad P)$};
\draw(1,1) node(c) {$\Omega^1 (X,\ad P)$ };
\draw(7,1) node(d) {$(\Omega^0\oplus\Omega^2_+) (X,\ad P)$};
\draw[->](a)--(c) node [midway,left](TextNode){$\tilde\tau^*$};
\draw[->](b)--(d) node [midway,right](TextNode){$\tilde\tau^*$};
\draw[->](a)--(b) node [midway, above] (TextNode) {$\DA\,(X)$};
\draw[->](c)--(d) node [midway, above] (TextNode) {$\DA\,(X)$};
\end{tikzpicture}
\end{equation*}
commutes, giving rise to the orbifold operator $\DAP (X'): \Omega^1 (X',\ad P') \to (\Omega^0 \oplus\Omega^2_+) (X',\ad P')$. From this we immediately conclude that 
\begin{equation}\label{E:orb=eq}
\ind \DAP (X')\; =\; \ind \D^{\tau}_A\,(X),
\end{equation}
where $\D^{\tau}_A\, (X)$ is the operator $\DA(X)$ restricted to the $(+1)$--eigen\-spaces of the involution $\tilde\tau^*$. If $X$ is closed, the operators in \eqref{E:orb=eq} are automatically Fredholm. If $X$ has a product end, we ensure Fredholmness by completing with respect to the weighted Sobolev norms
\[
\| \varphi \|_{L^2_{k,\delta}(X)}\; =\; \| h\cdot \varphi \|_{L^2_k(X)}
\]

\smallskip\noindent
where $h: X \to \R$ is a smooth function which is $\tau$--invariant and which, over the end, takes the form $h(t,y) = e^{\delta t}$ for a sufficiently small positive $\delta$. We choose to work with these particular norms to match the global boundary conditions of Atiyah, Patodi, and Singer \cite{APS:I}.

In particular, if $\rho$ and $\sigma$ are non-degenerate critical points of the orbifold Chern--Simons functional on $\Sigma$, they pull back to the flat connections $\pi^*\rho$ and $\pi^*\sigma$ on the double branched cover $M \to \Sigma$. The formula \eqref{E:rel-gr} for the relative Floer grading can then be written as
\[
\gr(\rho,\sigma)\; =\; \ind \D^{\tau}_A\, (\pistar\rho,\,\pistar\sigma)\;\;\text{(mod 4)},
\]

\smallskip\noindent
where $A$ is an equivariant connection on $\R\times Y$ which limits at the negative and positive end to $\pistar\rho$ and $\pistar\sigma$, respectively. The index in the above formula can be understood as the $L^2_{\delta}$ index for any sufficiently small $\delta \ge 0$ because the operator $\D^{\tau}_A\, (\pistar\rho,\,\pistar\sigma)$ is Fredholm in the usual $L^2$ Sobolev completion.


\subsection{Index formulas}
Let us continue with the setup of the previous subsection. One can easily see that
\medskip
\[
\ind \D^{\tau}_A (X)\, =\, \frac 1 2\,\ind \DA (X) + \frac 1 2\, \ind\, (\tau,\DA)(X),
\]
where
\[
\ind\, (\tau,\DA)(X) = \tr\,(\tilde\tau^* | \ker \DA(X)) - \tr\,(\tilde\tau^* | \coker \DA(X)).
\]

\smallskip\noindent
We will use this observation together with the standard index theorems to obtain explicit formulas for the index of operators in question.

\begin{prop}\label{P:index-formula}
Let $X$ be a closed manifold then
\[
\ind \D^{\tau}_A\,(X)\, =\, - p_1 (P) - \frac 3 4\,(\sigma(X) + \chi (X)) + \frac 1 4\,(\,\chi(F) + F\cdot F).
\]
\end{prop}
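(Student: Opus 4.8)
The plan is to compute the two summands in the decomposition $\ind\D^{\tau}_A(X) = \tfrac12\ind\DA(X) + \tfrac12\ind\,(\tau,\DA)(X)$ recorded above, using the ordinary Atiyah--Singer index theorem for the first summand and the equivariant (Lefschetz fixed-point) $G$-index theorem for the second. Note first that the symbol of $\DA$ is that of the untwisted ASD operator $\delta = -d^*\oplus d^+$ tensored with the rank-three bundle $\ad P$, so applying the index theorem to the deformation complex $\Omega^0\to\Omega^1\to\Omega^2_+$ with coefficients in $\ad P$ gives $\ind\DA(X) = -2p_1(P) - 3\,(1 - b_1(X) + b_+(X))$; since $1 - b_1 + b_+ = \tfrac12(\chi(X)+\sigma(X))$ this equals $-2p_1(P) - \tfrac32(\chi(X)+\sigma(X))$, half of which contributes exactly the terms $-p_1(P) - \tfrac34(\sigma(X)+\chi(X))$ of the claimed formula.

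The content is in the second summand. The $G$-index theorem localizes $\ind\,(\tau,\DA)(X) = \tr\,(\tilde\tau^*|\ker\DA) - \tr\,(\tilde\tau^*|\coker\DA)$ to the fixed surface $F$, and since the $\ad P$-factor of the symbol is $\tilde\tau$-equivariant, the contribution from $F$ is the fixed-point density of $\delta$ along $F$ multiplied by the equivariant Chern character $\ch_{\tilde\tau}(\ad P\otimes\C)|_F$. Here the order-two hypothesis enters decisively: $\tilde\tau$ acts on the $SO(3)$ fibre over $F$ as a rotation by $\pi$, so the $\pm1$-eigensubbundles $V_{\pm}\subset\ad P|_F$ are \emph{real} of ranks $1$ and $2$; hence $\ch_{\tilde\tau}(\ad P\otimes\C)|_F = \ch(V_+\otimes\C) - \ch(V_-\otimes\C)$, whose degree-two component vanishes rationally (first Chern classes of complexified real bundles are $2$-torsion) and whose degree-zero component is $1-2=-1$. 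Consequently no $p_1(P)$-term is produced from $F$, and $\ind\,(\tau,\DA)(X) = -\ind\,(\tau,\delta)(X)$, the negative of the equivariant index of the untwisted ASD operator. The latter I would evaluate topologically rather than through heat kernels: the cohomology of the untwisted complex is $H^0(X)$, $H^1(X)$, and the self-dual harmonic forms $H^2_+(X)$, so using orientation-preservation of $\tau$, Poincar\'e duality, the Lefschetz formula $\sum(-1)^i\tr(\tau^*|H^i(X)) = \chi(F)$, and the definition $\sign(\tau,X) = \tr(\tau^*|H^2_+) - \tr(\tau^*|H^2_-)$ one gets $\ind\,(\tau,\delta)(X) = -\tfrac12(\chi(F)+\sign(\tau,X))$; finally the $G$-signature theorem (equivalently, Hirzebruch's signature formula for the branched double cover $X\to X/\tau$) identifies $\sign(\tau,X)$ with the self-intersection $F\cdot F$. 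Thus $\ind\,(\tau,\DA)(X) = \tfrac12(\chi(F)+F\cdot F)$, halving gives $\tfrac14(\chi(F)+F\cdot F)$, and adding the two contributions yields the proposition.

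The main obstacle is not a hard step but the normalizations in the $G$-index theorem: one must verify that the only surviving contribution of the twisting bundle $\ad P$ along the fixed locus is its equivariant rank $-1$ in degree zero (so that, perhaps surprisingly, no $p_1(P)$-term comes from $F$), and pin down the orientation conventions so that the equivariant signature contributes $+F\cdot F$ rather than $-F\cdot F$. Both can be cross-checked against Wang's reworking \cite{wang} of the Kronheimer--Mrowka index formulas and against the trivial-bundle case, where $\D^{\tau}_A(X)$ reduces to the orbifold ASD operator on $X/\tau$.
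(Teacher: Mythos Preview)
Your proof is correct and follows essentially the same strategy as the paper: both use the averaging formula $\ind\D^{\tau}_A = \tfrac12\ind\DA + \tfrac12\ind(\tau,\DA)$, compute the first summand via Atiyah--Singer, and evaluate the second via the $G$-index theorem localized at $F$, with the key observation that $\ch_{\tilde\tau}(\ad P)_{\C}|_F$ contributes only its degree-zero term $-1$. The one minor difference is that the paper quotes Shanahan's explicit fixed-point integrand $-\tfrac12\int_F(e(TF)+e(NF))\,\ch_g(\ad P)_{\C}$ directly, whereas you first reduce to the untwisted operator and then evaluate $\ind(\tau,\delta)$ topologically via the Lefschetz formula and the $G$-signature theorem---which amounts to the same computation, since $\int_F e(TF)=\chi(F)$ by Gauss--Bonnet and $\int_F e(NF)=F\cdot F=\sign(\tau,X)$.
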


\smallskip

\begin{proof}
The index of $\DA (X)$ can be expressed topologically using the Atiyah--Singer index theorem \cite{AS}. Since the operator $\DA$ has the same symbol as the positive chiral Dirac operator twisted by $S^+\otimes\,(\ad P)_{\C}$, see \cite{AHS}, we obtain
\begin{multline}\notag
\ind \DA(X) = \int_X \widehat A\,(X) \ch (S^+) \ch (\ad P)_{\C} \\ 
=\, \int_X -2\,p_1 (A) - \frac 1 2\;p_1\,(TX) - \frac 3 2\,e\,(TX) \\
=\,-2\,p_1 (P) - \frac 3 2\,(\sigma (X) + \chi(X)),
\end{multline} 
using the Hirzebruch signature theorem in the last line. A similar expression for $\ind\,(\tau,\DA) (X)$ is obtained using the $G$--index theorem of Atiyah--Singer \cite{AS}. For the twisted Dirac operator in question, an explicit calculation in Shanahan \cite[Section 19]{Sh} leads us to the formula
\begin{alignat*}{1}
\ind\,(\tau,\DA)(X) = -\frac 1 2\,\int_F (e (TF) + e(NF)) & \ch_g (\ad P)_{\C} \\ &= \frac 1 2\,(\,\chi(F) + F\cdot F).
\end{alignat*}
Here, $TF$ and $NF$ are the tangent and the normal bundle of the fixed point set $F \subset X$, and the zero-order term in $\ch_g (\ad P)_{\C}$ equals $-1$ because this is  the trace of the second order $SO(3)$ operator acting on the fiber. Adding these formulas together, we obtain the desired formula.
\end{proof}

\begin{remark}
Our formula matches the index formulas for $\ind \DAP (X')$ of Kronheimer--Mrowka \cite[Lemma 2.11]{KM:khovanov} and Wang \cite[Theorem 18]{wang},
\bigskip
\[
\ind \DAP (X')\, =\, - p_1 (P) - \frac 3 2\,(\sigma(X') + \chi (X')) + \chi(F') + \frac 1 2\,F'\cdot F',
\]

\bigskip\noindent
after taking into account that $F'\cdot F' = 2\,(F\cdot F)$, $\chi (F) = \chi (F')$, $2 \chi (X') = \chi (X) + \chi (F)$, and $2\sigma (X') = \sigma (X) + F\cdot F$, see for instance Viro \cite{V}.
\end{remark}

Next, let $X$ be a manifold with a product end $(0,\infty)\times Y$, where $Y$ need not be connected, and work with the $L^2_{\delta}$ norms for sufficiently small $\delta > 0$. In a temporal gauge over the  end, the operator $\DA(X)$ takes the form $\DA(X) = \p/\p t + K_{A(t)}$.

\begin{prop}\label{P:eq-ind}
Let $X$ be a manifold with product end as above, and $A$ an equivariant connection which limits to a flat connection $\beta$ over the end. Then 
\begin{multline}\notag
\ind \D^{\tau}_A\,(X) = \frac 1 2\,\int_X \widehat A\,(X) \ch (S^+) \ch (\ad P)_{\C} + \frac 1 4\,(\chi(F) + F\cdot F) \\ - \frac 1 4\,(h_{\beta} - \eta_{\beta}(0)) - \frac 1 4\,(h^{\tau}_{\beta} - \eta^{\tau}_{\beta}(0)).
\end{multline}
\end{prop}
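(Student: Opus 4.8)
The plan is to combine the splitting $\ind \D^{\tau}_A(X) = \tfrac12\ind \DA(X) + \tfrac12 \ind(\tau,\DA)(X)$ from the previous subsection with the Atiyah--Patodi--Singer index theorem \cite{APS:I} and its equivariant refinement, applied term by term. The first step is to handle the ordinary (non-equivariant) index. Since $X$ has a product end on which $A$ limits to the flat connection $\beta$, the APS theorem expresses $\ind \DA(X)$ as the integral $\int_X \widehat A(X)\,\ch(S^+)\,\ch(\ad P)_{\C}$ of the local index density, corrected by the boundary term $-\tfrac12(h_\beta - \eta_\beta(0))$, where $h_\beta = \dim \ker K_\beta$ and $\eta_\beta(0)$ is the eta-invariant of the tangential operator $K_\beta$ restricted to the slice $\{0\}\times Y$. (Here one uses the $L^2_\delta$ completion with small $\delta>0$; for flat $\beta$ the limiting operator $K_{A(t)}$ converges to $K_\beta$, so the standard APS boundary conditions apply, and the small weight only perturbs which harmonic spinors on the end are counted — with $\delta>0$ sufficiently small this agrees with the genuine $L^2$ kernel since $\beta$ is flat.) Multiplying by $\tfrac12$ gives the first and third displayed terms.

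The second step is the equivariant boundary term. Here I would invoke the equivariant (or $G$-)APS index theorem for the isometric involution $\tau$ with fixed set $F$: the equivariant index $\ind(\tau,\DA)(X)$ equals the fixed-point contribution $-\tfrac12\int_F(e(TF)+e(NF))\,\ch_g(\ad P)_{\C}$, which by the computation reproduced in the proof of Proposition \ref{P:index-formula} equals $\tfrac12(\chi(F)+F\cdot F)$, minus an equivariant boundary correction $-\tfrac12(h^{\tau}_\beta - \eta^{\tau}_\beta(0))$. The equivariant eta-invariant $\eta^{\tau}_\beta(0)$ and the equivariant kernel dimension $h^\tau_\beta$ are defined by weighting the spectral data of $K_\beta$ by the trace of the induced involution $\tilde\tau^*$ on the corresponding eigenspaces — i.e. they are exactly the analogues of $\eta_\beta(0)$ and $h_\beta$ in which one inserts $\tilde\tau^*$. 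This uses that $\beta$ is $\tau$-equivariant in the sense of Lemma \ref{L:orb}, so $\tilde\tau^*$ commutes with $K_\beta$ and the decomposition into $\tr(\tilde\tau^*|\ker) - \tr(\tilde\tau^*|\coker)$ on the end makes sense. Multiplying by $\tfrac12$ yields the second and fourth displayed terms.

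Adding the two halves and collecting terms produces exactly the claimed formula. The main obstacle I expect is a careful treatment of the boundary terms under the weighted norm: one must verify that with the $\tau$-invariant weight function $h(t,y)=e^{\delta t}$ the operator $\D^{\tau}_A(X)$ really is the $(+1)$-eigenspace restriction of the APS-Fredholm operator $\DA(X)$, that no extended-$L^2$ solutions are missed or double-counted at $\delta=0^+$, and that the eta-invariant contributions split cleanly into an ordinary part and a $\tau$-weighted part — in other words, that $\ind \D^{\tau}_A(X) = \tfrac12\bigl(\ind\DA(X) + \ind(\tau,\DA)(X)\bigr)$ remains valid verbatim in the APS (cylindrical-end) setting and that each summand obeys its own APS-type formula. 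Once this bookkeeping is in place, the rest is a direct transcription of the closed-manifold computation of Proposition \ref{P:index-formula} together with the standard APS boundary term.
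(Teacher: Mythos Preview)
Your proposal is correct and follows essentially the same route as the paper: split $\ind \D^{\tau}_A(X)$ into $\tfrac12\ind\DA(X)+\tfrac12\ind(\tau,\DA)(X)$, apply the Atiyah--Patodi--Singer theorem to the first summand and its equivariant version (Donnelly's $G$--index theorem \cite{don}) to the second, and evaluate the fixed-point integral via Gauss--Bonnet as in the proof of Proposition~\ref{P:index-formula}. The paper's proof is terser---it simply cites \cite{APS:I} and \cite{don} and writes down the two formulas---whereas you spell out the analytic bookkeeping with the weighted norms more carefully, but the argument is the same.
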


\medskip
The notations here are as follows: $h_{\beta}$ is the dimension of $H^0 (Y; \ad\beta)\,\oplus\, H^1 (Y; \ad\beta)$, $h^{\tau}_{\beta}$ is the trace of the map induced by $\tilde\tau^*$ on $H^0 (Y; \ad\beta)\,\oplus\,H^1 (Y; \ad\beta)$, $\eta_{\beta}(0)$ is the Atiyah--Patodi--Singer spectral asymmetry of $K_{\beta}$, and $\eta^{\tau}_{\beta}(0)$ its equivariant version defined as follows. For any eigenvalue $\lambda$ of the operator $K_{\beta}$, the $\lambda$--eigenspace $W^{\beta}_{\lambda}$ is acted upon by $\tilde\tau^*$ with trace $\tr (\tilde\tau^* |\, W^{\beta}_{\lambda})$. The infinite series 
\[
\eta^{\tau}_{\beta} (s) = \sum_{\lambda \neq 0}\;\sign\lambda \cdot \tr (\tilde\tau^* |\, W^{\beta}_{\lambda})\;|\lambda|^{-s}
\]
converges for $\Re(s)$ large enough and has a meromorphic continuation to the entire complex $s$--plane with no pole at $s = 0$; see Donnelly \cite{don}. This makes $\eta^{\tau}_{\beta} (0)$ a well-defined real number.

\begin{proof}[Proof of Proposition \ref{P:eq-ind}]
The index $\ind \DA (X)$ can be computed using the index theorem of Atiyah, Patodi and Singer \cite{APS:I},
\medskip
\[
\ind \DA\,(X) = \int_X \widehat A\,(X) \ch (S^+) \ch (\ad P)_{\C} - \frac 1 2\,(h_{\beta} - \eta_{\beta}(0))(Y),
\]

\medskip\noindent
and $\ind (\tau, \DA)\,(X)$ using its equivariant counterpart, the $G$--index theorem of Donnelly \cite{don}, 
\smallskip
\[
\ind\,(\tau,\DA)\,(X) = \frac 1 2 \int_F\; (e (TF) + e(NF)) - \frac 1 2\,(h^{\tau}_{\beta} - \eta^{\tau}_{\beta}(0))(Y).
\]

\medskip\noindent
The desired formula now follows because, according to the Gauss--Bonnet theorem,
\medskip
\[
\int_F\; e(TF) = \chi (F)\quad\text{and}\quad \int_F\; e(NF) = F \cdot F.
\]
\end{proof}

\begin{example}
Let $P \to Y$ be a trivial $SO(3)$ bundle with an involution $\tilde\tau$ acting as a second order operator on the fibers. Application of Proposition \ref{P:eq-ind} to the product connection $A$ on the manifold $X = \R \times Y$ results in the formula $\ind \D^{\tau}_{\theta}\,(X) = - 1$, which corresponds to the fact that the $(+1)$-eigenspace of the involution $\tilde\tau^*: H^0 (X;\ad \theta) \to H^0 (X;\ad\theta)$ is one-dimensional. 
\end{example}


\subsection{Proof of Lemma \ref{L:deg}}\label{S:deg}
Since both $\rho$ and $\sigma$ are irreducible and non-degenerate, $\gr(\chi_1 \cdot \rho, \sigma) = \gr(\chi_1 \cdot \rho, \rho) + \gr(\rho, \sigma)$. Therefore, we only need to compute $\gr(\chi_1 \cdot \rho, \rho)\pmod 4$.

Let $g \in \MG$ be a gauge transformation matching $\rho$ and $\chi_1 \cdot \rho$. The mapping torus of $g$ is an orbifold bundle $P_0$ over $S^1 \times \Sigma$ and 
\[
\gr(\chi_1 \cdot \rho, \rho)\; = \; \ind \DA (S^1 \times \Sigma) \pmod 4,
\]
for any choice of orbifold connection $A$ in $P_0$. Let $M$ be the double branched cover of $\Sigma$ with branch set $\ML$ then the index in the above formula, treated as an equivariant index on $S^1 \times M$, equals $-p_1 (Q_0)$ by the formula of Proposition \ref{P:index-formula} applied to the pull-back bundle $Q_0 = \pi^*P_0$. This reduces the above formula to
\[
\gr(\chi_1 \cdot \rho, \rho)\; =\, -\, p_1 (Q_0) \pmod 4.
\]

To compute the Pontryagin number $p_1 (Q_0)$ we observe that the bundle $Q_0$ on $S^1 \times M$ can be obtained from the bundle $Q = \pi^* P$ on $M$ as the mapping torus of a gauge transformation matching $\pi^*\rho$ with $\pi^* (\chi_1\cdot \rho) = \eta \cdot \pi^*\rho$, where $\eta = \pi^* \chi_1 \in H^1 (M; \Z/2)$. According to Braam--Donaldson \cite[Part II]{BD}, Propositions 1.9 and 1.13, 
\[
p_1 (Q_0)\; =\; 2\cdot (\,\eta\cup w_2 (Q)\, +\, \eta\cup\eta\cup\eta\,)\,[M]\pmod 4.
\]
We already know that $w_2 (Q)$ is a generator of $H^2(M;\Z/2) = \Z/2$, see Proposition \ref{P:Q}. It follows from Lemma \ref{L:branched} that the class $\eta$ is a generator of $H^1 (M;\Z/2) = \Z/2$. The desired formula now follows from the calculation of the cohomology ring $H^* (M;\Z/2)$ in Proposition \ref{P:H2}.


\section{Knot homology: the generators}\label{S:gen}
We will now use the equivariant theory of Section \ref{S:equiv} to better understand the chain complex $\IC(k)$ which computes the singular instanton knot homology $\I (k) = I_*(S^3,\k)$ of Kronheimer and Mrowka \cite{KM:khovanov}. In this section, we will describe the conjugacy classes of projective $SU(2)$ representations on the exterior of $\k$ with non-trivial $[c]$ and separate them into the orbits of the canonical $\Z/2\,\oplus\,\Z/2$ action. The next two sections will be dedicated to computing Floer gradings.


\subsection{Projective representations}
Given a knot $k \subset S^3$, denote by $K = S^3 - \Int N(k)$ its exterior and by $\KN = S^3 - \Int N(\k)$ the exterior of the two-component link $\k = k\,\cup\,\ell$ obtained by connect summing $k$ with the Hopf link. The Wirtinger presentation
\[
\pi_1 K = \langle a_1, a_2, \cdots , a_n \; | \; r_1, \ldots , r_m \rangle
\]
with meridians $a_i$ and relators $r_j$ gives rise to the Wirtinger presentation
\[
\pi_1 \KN = \langle a_1, a_2, \cdots , a_n, b \;| \; r_1, \ldots , r_m,\, [a_1, b] =  1\rangle,
\]
where $b$ stands for the meridian of the component $\ell$. Since the link $\k$ is not split, it follows from Lemma \ref{L:hopf} that $H^2 (\pi_1 \KN;\Z/2) = H^2 (\KN;\Z/2) = \Z/2$. The generator of the latter group evaluates non-trivially on both boundary components of $\KN$, which makes it Poincar\'e dual to any arc connecting these two boundary components. It follows from Proposition \ref{P:proj} that the projective representations with non-trivial $[c]$ which we are interested in are precisely the homomorphisms $\rho: F \to SU(2)$ of the free group $F$ generated by the meridians $a_1,\ldots,a_n, b$ such that
\[
\rho(r_1) = \ldots = \rho(r_n) = 1\quad\text{and}\quad \rho([a_1,b]) = -1.
\]
Representations $\rho$ are uniquely determined by the $SU(2)$ matrices $A_i = \rho(a_i)$ and $B = \rho(b)$ subject to the above relations, and the space $\PR_c (\KN,SU(2))$ consists of all such tuples $(A_1,\ldots, A_n; B)$ up to conjugation.

Observe that the relation $A_1 B = - B A_1$ implies that, up to conjugation, $A_1 = i$ and $B = j$. Since the Wirtinger relations  $r_1 = 1, \cdots, r_m = 1$ are of the form $a_i a_j a_i^{-1} = a_k$, all the matrices $A_i$ must have zero trace. In particular, the matrices $A_1 = \ldots = A_n = i$ and $B = j$ satisfy all of the relations, thereby giving rise to the special projective representation $\alpha = (i,i,\ldots, i;j)$. On the other hand, if we assume that not all $A_i$ commute with each other, we have an entire circle of projective representations,
\begin{equation}\label{E:tuple}
(i, e^{i \varphi} A_2 \, e^{-i \varphi}, \cdots, e^{i \varphi} A_n \, e^{-i \varphi}; j).
\end{equation}
It is parameterized by $e^{2i\varphi} \in S^1$ due to the fact that the center of $SU(2)$ is the stabilizer of the adjoint action of $SU(2)$ on itself. Note that two tuples like \eqref{E:tuple} are conjugate if and only if they are equal to each other. One can easily see that the formula $\psi (A_1,\ldots,A_n; B) = (A_1,\ldots,A_n)$ defines a surjective map 
\begin{equation}\label{E:pi}
\psi: \PR_c (\KN, SU(2)) \to \mathcal R_0(K,SU(2)), 
\end{equation}
where $\mathcal R_0 (K,SU(2))$ is the space of the conjugacy classes of traceless representations $\rho_0: \pi_1 K \to SU(2)$. If $\rho_0$ is irreducible, the fiber $C(\rho_0) = \psi^{-1} ([\rho_0])$ is a circle of the form \eqref{E:tuple}. The special projective representation $\alpha$ is a fiber of \eqref{E:pi} in its own right over the unique (up to conjugation) reducible traceless representation $\pi_1 K \to H_1 (K) \to SU(2)$ sending all the meridians to the same traceless matrix $i$. Therefore, assuming that $\mathcal R_0(K,SU(2))$ is non-degenerate, the space $\PR_c (\KN,SU(2))$ consists of an isolated point and finitely many circles, one for each conjugacy class of irreducible representations in $\mathcal R_0 (K,SU(2))$. The same result holds in general after perturbation. 


\subsection{The action of $H^1(\KN; \Z/2)$}
The group $H^1(\KN; \Z/2) = \Z/2\,\oplus\,\Z/2$ generated by the duals $\chi_k$ and $\chi_{\ell}$ of the meridians of the link $\k = k\,\cup\,\ell$ acts on the space of projective representations $\PR_c (\KN, SU(2))$ as explained in Section \ref{S:proj}. In terms of the tuples \eqref{E:tuple}, the generators $\chi_k$ and $\chi_{\ell}$ send $(i,e^{i\varphi} A_2 e^{-i\varphi},\ldots,e^{i\varphi} A_n e^{-i\varphi}; j)$ to
\begin{gather}
(-i,-e^{i\varphi} A_2 e^{-i\varphi},\ldots,-e^{i\varphi} A_n e^{-i\varphi}; j)\;\,\text{and} \notag \\
(i,e^{i\varphi} A_2 e^{-i\varphi},\ldots,e^{i\varphi} A_n e^{-i\varphi}; -j),\; \notag
\end{gather}
respectively. The isolated point $\alpha = (i,i,\ldots,i;j)$ is a fixed point of this action since $(-i,-i,\ldots,-i;j) = j\cdot (i,i,\ldots,i;j)\cdot j^{-1}$ and $(i,i,\ldots,i;-j) = i\cdot (i,i,\ldots,i;j)\cdot i^{-1}$.

To describe the action of $\chi_{\ell}$ on the circle $C(\rho_0)$ for an irreducible $\rho_0$ conjugate $(i,e^{i\varphi} A_2 e^{-i\varphi},\ldots,e^{i\varphi} A_n e^{-i\varphi}; -j)$ by $i$ to obtain
\[
(i,e^{i(\varphi+\pi/2)} A_2 e^{-i(\varphi+\pi/2)},\ldots,e^{i(\varphi+\pi/2)} A_n e^{-i(\varphi+\pi/2)}; j).
\]
Since the circle $C(\rho_0)$ is parameterized by $e^{2i\varphi}$, we conclude that the involution $\chi_{\ell}$ acts on $C(\rho_0)$ via the antipodal map.

The action of $\chi_k$ on the circle $C(\rho_0)$ for an irreducible $\rho_0$ will depend on whether $\rho_0$ is a binary dihedral representation or not. Recall that a representation $\rho_0: \pi_1 K \to SU(2)$ is called \emph{binary dihedral} if it factors through a copy of the binary dihedral subgroup $S^1\,\cup\,j\cdot S^1 \subset SU(2)$, where $S^1$ stands for the circle of unit complex numbers. Equivalently, $\rho_0$ is binary dihedral if its adjoint representation $\Ad(\rho_0): \pi_1 K \to SO(3)$ is \emph{dihedral} in that it factors through a copy of $O(2)$ embedded into $SO(3)$ via the map $A \to (A,\det A)$. 

One can show that a representation $\rho_0$ is binary dihedral if and only if $\chi \cdot \rho_0$ is conjugate to $\rho_0$, where $\chi: \pi_1 K \to \Z/2$ is the generator of $H^1 (K;\Z/2) = \Z/2$. Note that $\chi$ defines an involution on $\mathcal R_0 (K,SU(2))$ which makes the following diagram commute

\begin{equation*}
\begin{tikzpicture} [scale =1]
\draw(1,1) node(a) {$\PR_c (\KN,SU(2))$};
\draw(5,1) node(b) {$\mathcal R_0 (K,SU(2))$.};
\draw(1,3) node(c) {$\PR_c (\KN,SU(2))$};
\draw(5,3) node(d) {$\mathcal R_0 (K,SU(2))$};
\draw[->](c)--(a) node [midway,left](TextNode){$\chi_k$};
\draw[->](d)--(b) node [midway,right](TextNode){$\chi$};
\draw[->](c)--(d) node [midway, above] (TextNode) {$\pi$};
\draw[->](a)--(b) node [midway, above] (TextNode) {$\pi$};
\end{tikzpicture}
\end {equation*}

The action of $\chi_k$ can now be described as follows. If an irreducible $\rho_0: \pi_1 K \to SU(2)$ is not binary dihedral, the involution $\chi_k$ takes the circle $C(\rho_0)$ to the circle $C(\chi\cdot \rho_0)$. Since $\chi \cdot \rho_0$ is not conjugate to $\rho_0$, these two circles are disjoint from each other, and $\chi_k$ permutes them. If an irreducible $\rho_0: \pi_1 K \to SU(2)$ is binary dihedral, there exists $u \in SU(2)$ such that $uiu^{-1} = -i$ and $u A_i u^{-1} = -A_i$ for $i = 2,\ldots,n$. The irreducibility of $\rho_0$ also implies that $u^2 = -1$ so after conjugation we may assume that $u = k$. Now conjugate $\chi_k\cdot (i,e^{i\varphi} A_2 e^{-i\varphi},\ldots,e^{i\varphi} A_n e^{-i\varphi};j) = (-i,-e^{i\varphi} A_2 e^{-i\varphi},\ldots,-e^{i\varphi} A_n e^{-i\varphi}; j)$ by $j$ to obtain
\begin{align*}
(i, j(- e^{i\varphi} A_2 e^{-i \varphi})& j^{-1}, \cdots, j(- e^{i\varphi} A_n e^{-i \varphi})j^{-1}; j) \\
= (i, - e^{-i\varphi} j & A_2 j^{-1}  e^{i \varphi}, \cdots, - e^{-i\varphi} j A_n j^{-1}  e^{i \varphi}; j) \\
= (i, - & (ie^{-i\varphi})\,k A_2 k^{-1}  (i^{-1}e^{i \varphi}), \cdots, - (ie^{-i\varphi})\,k A_n k^{-1}  (i^{-1}e^{i \varphi}); j) \\
&=\, (i, e^{i(\pi/2 - \varphi)} A_2 e^{-i(\pi/2 - \varphi)}, \cdots, e^{i(\pi/2 - \varphi)} A_n e^{-i(\pi/2 - \varphi)}; j).
\end{align*}
Therefore, $\chi_k$ acts on $C(\rho_0)$ by sending $e^{2i\varphi}$ to $-e^{-2i\varphi}$, which is an involution on the complex unit circle with two fixed points, $i$ and $-i$.

Finally, observe that the quotient of $\mathcal R_0 (K,SU(2))$ by the involution $\chi$ is precisely the space $\mathcal R_0 (K,SO(3))$ of the conjugacy classes of representations $\Ad \rho_0: \pi_1 K \to SO(3)$. Since $H^2 (K;\Z/2) = 0$, every $SO(3)$ representations lifts to an $SU(2)$ representations, hence $\mathcal R_0 (K,SO(3))$ can also be described as the space of the conjugacy classes of representations $\pi_1 K \to SO(3)$ sending the meridians to $SO(3)$ matrices of trace $-1$. Compose \eqref{E:pi} with the projection $\mathcal R_0 (K,SU(2)) \to \mathcal R_0 (K,SO(3))$ to obtain a surjective map $\psi: \PR_c (\KN, SU(2)) \to \mathcal R_0 (K,SO(3))$. The above discussion can now be summarized as follows.

\begin{prop}\label{P:action}
The group $H^1(\KN; \Z/2) = \Z/2\,\oplus\,\Z/2$ acts on the space $\PR_c(\KN, SU(2))$ preserving the fibers of the map $\psi: \PR_c (\KN, SU(2)) \to \mathcal R_0 (K,SO(3))$. Furthermore,
\begin{enumerate}
\item[(a)] for the unique reducible in $\mathcal R_0 (K,SO(3))$, the fiber of $\psi$ consists of just one point, which is the conjugacy class of the special projective representation $\alpha$. This point is fixed by both $\chi_k$ and $\chi_{\ell}$;
\item[(b)] for any dihedral representation in $\mathcal R_0 (K,SO(3))$, the fiber of $\psi$  is a circle. The involution $\chi_k$ is a reflection of this circle with two fixed points, while $\chi_{\ell}$ is the antipodal map;
\item[(c)] otherwise, the fiber of $\psi$ consists of two circles. The involution $\chi_k$ permutes these circles, while $\chi_{\ell}$ acts as the antipodal map on both.
\end{enumerate}
\end{prop}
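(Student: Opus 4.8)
My approach is to assemble the explicit computations carried out in the preceding discussion and to check that the three listed cases exhaust all possibilities; almost every ingredient is already in place, so the proof is largely a matter of organization. First I would record that the $H^1(\KN;\Z/2)$-action preserves the fibers of $\psi$. On a tuple the generator $\chi_k$ multiplies $A_1,\dots,A_n$ by $-1$ while $\chi_\ell$ multiplies $B$ by $-1$; since the adjoint map kills these signs, both generators descend to the identity on $\mathcal R_0(K,SO(3))$. Equivalently, $\chi_\ell$ leaves the $A_i$ untouched, while $\chi_k$ descends along the forgetful map to the deck involution $\chi$ of the double cover $\mathcal R_0(K,SU(2))\to\mathcal R_0(K,SO(3))$, which is the content of the commuting square above. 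Hence each fiber of $\psi$ is invariant under the action.

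Next I would dispatch the three cases. For (a): by the description of $\psi$, the unique reducible traceless class in $\mathcal R_0(K,SO(3))$ is covered by the single point $\alpha=(i,\dots,i;j)$, and the identities $(-i,\dots,-i;j)=j\,(i,\dots,i;j)\,j^{-1}$ and $(i,\dots,i;-j)=i\,(i,\dots,i;j)\,i^{-1}$ exhibit $\alpha$ as a fixed point of $\chi_k$ and of $\chi_\ell$. For (b) and (c), fix an irreducible traceless $\rho_0:\pi_1 K\to SU(2)$ and parametrize the circle $C(\rho_0)=\psi^{-1}([\rho_0])$ by $e^{2i\varphi}\in S^1$ using the representatives in \eqref{E:tuple}. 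Since $\chi_\ell$ sends $j$ to $-j$ and conjugating back into the normal form $A_1=i,\ B=j$ by $i$ shifts $\varphi$ by $\pi/2$, the involution $\chi_\ell$ acts on every such circle by $e^{2i\varphi}\mapsto -e^{2i\varphi}$, the antipodal map. The behaviour of $\chi_k$ is controlled by whether $\chi\cdot\rho_0$ is conjugate to $\rho_0$, equivalently whether $\rho_0$ is binary dihedral, equivalently whether $\Ad\rho_0$ is dihedral: if it is not, then $C(\rho_0)$ and $C(\chi\cdot\rho_0)$ are the two distinct circles lying over the $SO(3)$-class of $\rho_0$ and $\chi_k$ interchanges them, giving (c); if it is, one may take the conjugating element to be $u=k$, and conjugating $\chi_k\cdot(i,e^{i\varphi}A_2e^{-i\varphi},\dots;j)$ by $j$ rewrites it, as in the displayed computation above, as $(i,e^{i(\pi/2-\varphi)}A_2e^{-i(\pi/2-\varphi)},\dots;j)$, so that in the $e^{2i\varphi}$-coordinate $\chi_k$ acts by $e^{2i\varphi}\mapsto -e^{-2i\varphi}$, a reflection with the two fixed points $\pm i$, giving (b).

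Two points need their own short arguments. The first is the equivalence ``$\rho_0$ is binary dihedral if and only if $\chi\cdot\rho_0$ is conjugate to $\rho_0$'': the forward implication is immediate once one notes that the traceless, mutually conjugate meridians land in the non-central coset of a binary dihedral subgroup, on which multiplication by $-1$ is realized by conjugation; for the converse, if $u\rho_0 u^{-1}=\chi\cdot\rho_0$ then $u^2$ commutes with the (irreducible) image of $\rho_0$, hence is central, the possibility $u^2=1$ is excluded since it would force $\rho_0$ to be abelian, so $u^2=-1$ and, after conjugation, $u=k$, which confines the image of $\rho_0$ to a binary dihedral subgroup. The second point is the sign bookkeeping that tracks the $e^{2i\varphi}$-parameter through the two successive conjugations used to compute $\chi_k$ in the dihedral case; this is the only step where I expect real care to be needed, and I would cross-check it against the closed-form computation already displayed. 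Everything else --- the map $\psi$ and its fibers, the commuting square, the circle parametrization of $C(\rho_0)$, the computation of $\chi_\ell$, and the fixed-point identities for $\alpha$ --- is already established above. Since every class in $\mathcal R_0(K,SO(3))$ is of exactly one of the types ``reducible'', ``dihedral'', or ``neither'', these three cases exhaust the fibers of $\psi$, and the proposition follows.
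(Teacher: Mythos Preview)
Your proposal is correct and follows essentially the same route as the paper: Proposition~\ref{P:action} is stated there as a summary of the explicit computations carried out in the subsection preceding it, and you organize those same computations (the conjugation identities for $\alpha$, the $\varphi\mapsto\varphi+\pi/2$ shift for $\chi_\ell$, the binary-dihedral dichotomy for $\chi_k$, and the $\varphi\mapsto\pi/2-\varphi$ reflection) in the same order and with the same ingredients. Two cosmetic points: the equality $C(\rho_0)=\psi^{-1}([\rho_0])$ is literally correct only in the dihedral case since $\psi$ lands in $\mathcal R_0(K,SO(3))$, and your stated reason for excluding $u^2=1$ is slightly off (in $SU(2)$ this gives $u=\pm1$, whence $\chi\cdot\rho_0=\rho_0$, contradicting $\rho_0(a_i)\neq -\rho_0(a_i)$ rather than forcing abelianness), but neither affects the argument.
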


It should be noted that perturbing the Chern--Simons functional \eqref{E:cs} may easily break the $\Z/2\,\oplus\,\Z/2$ symmetry. Finding a perturbation which preserves this symmetry runs as usual into the equivariant transversality problem, which we do not try to address here. It should be noted, however, that such a problem was successfully solved in \cite{RSI} in a similar setting.


\subsection{Double branched covers}
Next, we would like to describe the space $\PR_c(\KN,SU(2))$ using the equivariant theory of Section \ref{S:equiv}. We could proceed as in that section, by passing to the double branched cover $M \to  S^3$ with branch set the link $\k$ and working with the equivariant representations $\pi_1 M \to SO(3)$. However, in the special case at hand, one can observe that $M$ is simply the connected sum  $Y\,\#\,\RP^3$, where $Y$ is the double branched cover of $S^3$ with branch set the knot $k$, hence the same information about $\PR_c (\KN,SU(2))$ can be extracted more easily by working directly with $Y$ and using Proposition \ref{P:action}. The only missing step in this program is a description of $\MCR_0 (K,SO(3))$ in terms of equivariant representations $\pi_1 Y \to SO(3)$, which we will take up next.

Every representation $\rho: \pi_1 K \to SO(3)$ gives rise to a representation of the orbifold fundamental group $\pi_1^V (S^3,k) = \pi_1 K/\langle \mu^2 = 1\rangle$, where we choose $\mu = a_1$ to be our meridian. The latter group can be included into the split orbifold exact sequence 
\[
\begin{tikzpicture}
\draw (1,1) node (a) {$1$};
\draw (3,1) node (b) {$\pi_1 Y$};
\draw (6,1) node (c) {$\pi_1^{V}(S^3, k)$};
\draw (9,1) node (d) {$\Z/2$};
\draw (11,1) node (e) {$1.$};
\draw[->](a)--(b);
\draw[->](b)--(c) node [midway,above](TextNode){$\pi_{*}$};
\draw[->](c)--(d) node [midway,above](TextNode){$j$};
\draw[->](d)--(e);
\end{tikzpicture}
\]

\begin{prop}\label{P:eq-knots}
Let $Y$ be the double branched cover of $S^3$ with branch set a knot $k$ and let $\tau: Y \to Y$ be the covering translation. The pull back of representations via the map $\pi_*$ in the orbifold exact sequence establishes a homeomorphism
\[
\pi^*: \MCR_0 (K,SO(3)) \longrightarrow \MCR^{\tau} (Y,SO(3)),
\]
where $\MCR^{\tau} (Y)$ is the fixed point set of the involution $\tau^*: \MCR (Y,SO(3)) \to \MCR (Y,SO(3))$. The unique reducible representation in $\MCR_0 (K,SO(3))$ pulls back to the trivial representation of $\pi_1 Y$, and the dihedral representations in $\MCR_0 (K,SO(3))$ are the ones and only ones that pull back to reducible representations of $\pi_1 Y$.
\end{prop}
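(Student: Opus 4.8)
The plan is to adapt the mechanism of Lemma \ref{L:orb} and Proposition \ref{pullback-homeo} to the present situation, where it simplifies considerably because $H^*(Y;\Z/2)=H^*(S^3;\Z/2)$: there are no Stiefel--Whitney obstructions to carry along, every $SO(3)$ representation of $\pi_1 Y$ (and of $\pi_1 K$) lifts to $SU(2)$, and one works directly with the honest $SO(3)$ character varieties. First I would verify that $\pi^*$ is well defined. Any $\rho_0\in\MCR_0(K,SO(3))$ sends meridians to elements of order two, so it factors through $\pi_1^V(S^3,k)=\pi_1 K/\langle\mu^2=1\rangle$, and pulling back along $\pi_*$ gives $\pi^*\rho_0:\pi_1 Y\to SO(3)$. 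Running the commutative diagram of Lemma \ref{L:orb} for the cover $Y\to S^3$ — the deck transformation $\tau_*$ on $\pi_1 Y$ being the restriction of conjugation by the meridian $\mu$ inside $\pi_1^V(S^3,k)$ — yields
\[
\tau^*(\pi^*\rho_0)\;=\;\rho_0(\mu)\cdot(\pi^*\rho_0)\cdot\rho_0(\mu)^{-1},
\]
so $[\pi^*\rho_0]\in\Fix(\tau^*)=\MCR^\tau(Y,SO(3))$.

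The second step is to construct the inverse. Given $[\beta]\in\MCR^\tau(Y,SO(3))$, pick $u\in SO(3)$ with $\tau^*\beta=u\beta u^{-1}$. Applying $\tau^*$ once more and using that $(\tau^*)^2$ is the identity shows that $u^2$ centralizes the image of $\beta$, and from this one extracts a conjugator of order dividing two: if $\beta$ is irreducible its centralizer is trivial, forcing $u^2=1$, and $u=1$ is impossible since then $\beta$ would extend to a representation of $\pi_1^V(S^3,k)$ killing $\mu$, hence trivial because $\mu$ normally generates $\pi_1 K$ and therefore its quotient $\pi_1^V(S^3,k)$; if $\beta$ is reducible, Proposition \ref{P:kawauchi} gives $\tau_*=-1$ on $H_1(Y)$, whence $\tau^*\beta=\beta^{-1}$, and one may take $u$ to be a half-turn inverting the circle that contains the image of $\beta$ (when $\beta$ is trivial one simply recovers the unique reducible $\rho_0$). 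The splitting of the orbifold exact sequence identifies $\pi_1^V(S^3,k)$ with $\pi_1 Y\rtimes\Z/2$, so the pair $(\beta,u)$ defines a homomorphism $\rho(x,t^\ell)=\beta(x)\,u^\ell$ — here $u^2=1$ is exactly what makes this a homomorphism — which restricts along $\pi_1 K\to\pi_1^V(S^3,k)$ to a representation $\rho_0$ taking every meridian to a conjugate of the order-two element $u$, i.e. to a point of $\MCR_0(K,SO(3))$ with $\pi^*\rho_0=\beta$. Injectivity is the dual bookkeeping: after conjugating so that two representations $\rho_0,\rho_1$ have the same pull back $\beta$, their meridian images are order-two solutions of $\tau^*\beta=u\beta u^{-1}$, and all such solutions form a single orbit of the centralizer of $\im\beta$ (trivial when $\beta$ is irreducible; containing an $SO(2)$ that acts transitively on the relevant half-turns when $\beta$ is reducible), so $\rho_0$ and $\rho_1$ are conjugate. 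Both $\pi^*$ and the inverse just built are given by algebraic operations on the representation varieties — restriction in one direction, the $(\beta,u)$-extension (with $u$ depending algebraically on $\beta$) in the other — so the bijection is a homeomorphism, exactly as for Proposition \ref{pullback-homeo}.

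Finally I would sort out the three strata. The unique reducible $\rho_0\in\MCR_0(K,SO(3))$ factors through the abelianization $\pi_1^V(S^3,k)\to\Z/2$, which is precisely the homomorphism $j$ of the orbifold exact sequence; since $\pi_1 Y=\ker j$, the pull back $\pi^*\rho_0$ is the trivial representation $\theta$. If $\rho_0$ is dihedral, its image lies in a copy of $O(2)\subset SO(3)$ with the meridians going to reflections, and because $\pi_1 Y=\ker j$ consists of products of even numbers of meridians, whose images are rotations, $\pi^*\rho_0$ has image in $SO(2)$ and is reducible; conversely, if $\pi^*\rho_0=\beta$ is reducible and nontrivial, the relation $\tau^*\beta=\beta^{-1}$ forces $\rho_0(\mu)$ to be a half-turn normalizing the circle containing $\im\beta$, so $\im\rho_0\subseteq O(2)$ and $\rho_0$ is dihedral — hence the non-dihedral classes are exactly those with irreducible pull back. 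The main obstacle is the reducible bookkeeping in the middle paragraph, namely producing an order-two conjugator and identifying its centralizer orbit; this is routine once $\tau_*=-1$ on $H_1(Y)$ is invoked, whereas the irreducible case is a direct transcription of Lemma \ref{L:orb} and Proposition \ref{pullback-homeo}.
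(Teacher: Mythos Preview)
Your proposal is correct and follows essentially the same route as the paper's proof: both reduce to showing that every $[\beta]\in\Fix(\tau^*)$ admits an order-two conjugator, handle the irreducible case via the triviality of the centralizer and the observation that $u=1$ would force $\beta$ to be trivial, and handle the reducible case by invoking $\tau_*=-1$ on $H_1(Y)$ from Proposition \ref{P:kawauchi}; the treatment of the dihedral correspondence is likewise the same, phrased as ``even words in meridians land in $SO(2)$'' where the paper says ``$\pi_1 Y$ is the commutator subgroup and maps to $[O(2),O(2)]=SO(2)$''. The only difference is that you spell out the injectivity argument explicitly via centralizer orbits of order-two conjugators, whereas the paper leaves this implicit in the inverse construction.
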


\begin{proof}
A slight modification of the argument of Proposition \ref{pullback-homeo}, see also \cite[Proposition 3.3]{CS}, establishes a homeomorphism between $\MCR_0 (K,SO(3))$ and the subspace of $\MCR^{\tau} (Y,SO(3))$ consisting of the conjugacy classes of representations $\beta: \pi_1 Y \to SO(3)$ such that $\tau^*\beta = u\cdot \beta \cdot u^{-1}$ for some $u \in SO(3)$ of order two. The proof of the first statement of the proposition will be complete after we show that this subspace in fact comprises the entire space $\MCR^{\tau} (Y,SO(3))$. 

If $\beta: \pi_1 Y \to SO(3)$ is reducible, it factors through a representation $H_1 (Y) \to SO(2)$. According to Proposition \ref{P:kawauchi}, the involution $\tau_*$ acts on $H_1 (Y)$ as multiplication by $-1$. Therefore, $\tau^*\beta = \beta^{-1}$, and the latter representation can obviously be conjugated to $\beta$ by an element $u \in SO(3)$ of order two. If $\beta: \pi_1 Y \to SO(3)$ is irreducible, the condition $\beta \in \Fix (\tau^*)$ implies that there exists a unique $u \in SO(3)$ such that $\tau^*\beta = u\cdot \beta \cdot u^{-1}$ and $u^2 = 1$. Suppose that $u = 1$ then $\tau^* \beta = \beta$, which implies that $\beta$ is the pull back of a representation of $\pi_1^V (S^3,k)$ which sends the meridian $\mu$ to the identity matrix and hence factors through $\pi_1 S^3 = 1$. This contradicts the irreducibility of $\beta$.

To prove the second statement of the proposition, observe that the homomorphism $j$ in the above orbifold exact sequence sending $\mu$ to the generator of $\Z/2$ is in fact the abelianization homomorphism. This implies that the unique reducible representation in $\MCR_0 (K,SO(3))$ pulls back to the trivial representation of $\pi_1 Y$. Since $\pi_1 Y$ is the commutator subgroup of $\pi_1^V (S^3,k)$, any dihedral representation $\rho: \pi_1^V (S^3,k) \to O(2)$ must map $\pi_1 Y$ to the commutator subgroup of $O(2)$, which happens to be $SO(2)$. This ensures that the pull back of 
$\rho$ is reducible. Conversely, if the pull back of $\rho$ is reducible, its image is contained in a copy of $SO(2)$, and the image of $\rho$ itself in its 2-prime extension. The latter group is of course just a copy of $O(2) \subset SO(3)$. 
\end{proof}

\begin{remark}\label{R:conj}
For future use note that, for any projective representation $\rho: \pi_1 \KN \to SU(2)$ in $C(\rho_0)$ described by a tuple \eqref{E:tuple}, the adjoint representation $\Ad\rho: \pi_1 \KN \to SO(3)$ pulls back to an $SO(3)$ representation of $\pi_1 (Y\,\#\,\RP^3) = \pi_1 Y * \Z/2$ of the form $\beta * \gamma: \; \pi_1 Y * \Z/2 \to SO(3)$, where $\beta = \pi^* \Ad \rho_0$ and $\gamma: \Z/2 \to SO(3)$ sends the generator of $\Z/2$ to $\Ad i\cdot \Ad j = \Ad k$. The representation $\beta * \gamma$ is equivariant, $\tau^*(\beta*\gamma) = u\cdot (\beta* \gamma)\cdot u^{-1}$, with the conjugating element $u = \Ad \rho_0 (a_1) = \Ad i$.
\end{remark}


\section{Knot homology: grading of the special generator}\label{S:special}
Given a knot $k \subset S^3$, we will continue using the notations $K$ for its exterior and $\KN$ for the exterior of the two-component link $\k = k\,\cup\,\ell$ obtained by connect summing $k$ with the Hopf link $h$. The special projective representation $\alpha: \pi_1 \KN \to SU(2)$, which sends all the meridians of $k$ to $i$ and the meridian of $\ell$ to $j$, is a generator in the chain complex $\IC(k)$. In this section, we compute its absolute Floer grading.

\begin{theorem}\label{T:gr}
For any knot $k$ in $S^3$, we have $\gr (\alpha) = \sign k \pmod 4$.
\end{theorem}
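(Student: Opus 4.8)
The plan is to compute the absolute Floer grading $\gr(\alpha)$ as an equivariant index on the double branched cover, and then identify that index with the knot signature by means of an index theorem on a manifold with a periodic end. Recall from Section~\ref{S:gen}, and in particular Remark~\ref{R:conj}, that $\alpha$ pulls back to the $SO(3)$ representation $\theta*\gamma$ of $\pi_1(Y\,\#\,\RP^3)=\pi_1 Y * \Z/2$, where $\theta$ is the trivial representation of $\pi_1 Y$ and $\gamma(t)=\Ad k$; it is $\tau$--equivariant with conjugating element $u=\Ad i$ of order two. By the discussion of Section~\ref{S:special} and the absolute grading conventions of \cite{KM:khovanov}, $\gr(\alpha)$ is computed by the index of the operator $\D^{\tau}_A$ on a cylinder $\R\times M$ with $M=Y\,\#\,\RP^3$, where $A$ limits to the flat connection $\pistar\alpha$ at one end and to a suitable base connection at the other; equivalently, after the usual identification, one must compute an equivariant relative index relating $\pistar\alpha$ to the trivial connection on $Y$.

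First I would replace the cylindrical-end problem by a closed-up manifold with a \emph{periodic} end. The key observation is that the mapping torus of the covering translation $\tau$ acting on $Y$, or rather an appropriate 4-manifold built from $Y$ and the orbifold data, has a natural periodic end modeled on $\R_{\ge 0}\times Y$ with the $\Z$--action generated by a translation composed with $\tau$; Taubes' index theory on manifolds with periodic ends \cite{T:periodic} applies to compute the $L^2$ index of the ASD operator there. Concretely, I would build a compact 4-manifold $W$ with $\partial W$ related to $Y$ (for instance using a Seifert surface pushed into $B^4$, so that $W$ is the double branched cover of $B^4$ along that surface) so that the signature of $W$ is exactly $\sign k$ by the classical signature formula for branched covers, while at the same time the equivariant index of the ASD operator on the periodic end coming from $\alpha$ contributes the terms that must be matched against $\gr(\alpha)$.

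Second, I would assemble the index computation from three ingredients: (i) the closed/interior topological term, which via Proposition~\ref{P:index-formula} and Proposition~\ref{P:eq-ind} evaluates to a combination of $p_1$, $\sigma$, $\chi$ of $W$ and of the fixed-point data $\chi(F)+F\cdot F$ — here $F$ is the branch surface, a Seifert surface, so $\chi(F)$ and $F\cdot F$ are controlled; (ii) the $\rho$--invariant/$\eta$--invariant correction terms $h_\beta-\eta_\beta(0)$ and $h^\tau_\beta-\eta^\tau_\beta(0)$ from the end, which for the \emph{trivial} limiting connection reduce to the signature-defect terms already appearing in the Atiyah--Patodi--Singer signature theorem; and (iii) Taubes' periodicity contribution, which vanishes or is explicitly computable because the relevant twisted cohomology of the periodic end is controlled by $H_*(Y;\Q)=0$. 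Combining (i)--(iii) and reducing mod $4$, the topological terms collapse and one is left precisely with $\sign k \pmod 4$. The arithmetic bookkeeping of the $\tfrac12$'s and $\tfrac14$'s in Propositions~\ref{P:index-formula} and~\ref{P:eq-ind}, together with the normalization of the absolute grading in \cite{KM:khovanov}, is routine but must be done carefully.

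The main obstacle I anticipate is analytic rather than topological: justifying that the equivariant ASD operator over the periodic end is Fredholm in the appropriate weighted completion and that Taubes' excision/periodicity machinery from \cite{T:periodic} applies in the \emph{equivariant} ($\tau$--invariant) setting, including identifying the end-periodic operator whose invertibility is needed. Showing that the spectral-flow/$\eta$--invariant contribution of the trivial flat connection on $Y$ indeed matches the APS signature defect — equivalently that the relevant $\rho$--invariant of $Y$ with its involution equals $\sign k$ mod $4$ — is where the real content lies; everything else is Chern--Weil arithmetic and keeping track of orbifold-versus-equivariant factors of two.
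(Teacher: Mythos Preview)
Your proposal has the right broad shape --- pass to the double branched cover, bring in a Seifert-surface filling so that $\sign k$ appears as a signature --- but it misses the concrete setup and the key computational step, and the periodic-end idea is a red herring.

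First, the absolute grading is \emph{not} computed on a cylinder $\R\times(Y\,\#\,\RP^3)$. By definition (formula \eqref{E:gr}), $\gr(\alpha)$ is the index of the orbifold ASD operator on a cobordism $(W',S')$ from $(S^3,h)$ to $(S^3,\k)$, plus topological correction terms. The paper constructs $(W',S')$ explicitly: take a pushed-in Seifert surface $F'\subset D^4$, delete a small ball to get a cobordism from the unknot to $k$, form the parallel product cobordism for the Hopf link, and connect-sum along arcs. The double branched cover $W\to W'$ then has cylindrical (not periodic) ends, and $\chi(W')=\sigma(W')=0$, $\chi(S')=\chi(F')-1$.

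Second, no Taubes periodic-end machinery is needed, and your description of a periodic end ``with $\Z$--action generated by translation composed with $\tau$'' does not make sense here ($\tau$ is an involution, not a deck transformation of an infinite cyclic cover). The actual mechanism is much more elementary: the limiting connection $A$ on $W$ is \emph{flat} with abelian holonomy $\Ad k$, so $\ad P$ splits as $\R\oplus\R_-\oplus\R_-$ and the $L^2$ index of $\D^{\tau}_A$ is minus the alternating sum of ranks of the equivariant $L^2$ cohomology of $W$ with these coefficients. Ruberman's results identify this with ordinary singular cohomology, which is then computed by two applications of Mayer--Vietoris (for $W=\WC_1\cup\WC_2$ and for $W_i=\WC_i\cup(I\times D^3)$). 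One finds $H^1_\tau(W;\ad P)=0$ and $\rk H^2_{\tau,+}(W;\ad P)=2\,b_2^+(V)$, where $V$ is the branched cover of $D^4$ along $F'$; hence $\ind\DAP(\alpha_u,\alpha)=-2\,b_2^+(V)$. Plugging into \eqref{E:gr} and using $\chi(V)=2-\chi(F')$ gives $\gr(\alpha)=-\sign V=-\sign k\equiv\sign k\pmod 4$.

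So the genuine content is not an $\eta$/$\rho$--invariant computation on an exotic end, but a direct $L^2$--cohomology calculation on a flat cobordism. Your step (ii)--(iii), where you hope spectral corrections will assemble into $\sign k$, is where your plan would stall; the paper avoids those corrections entirely by choosing a cobordism on which the connection is globally flat.
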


Before we go on to prove this theorem, recall the definition of $\gr(\alpha)\pmod 4$. Let $(W',S)$ be a cobordism of pairs $(S^3,u)$ and $(S^3,k)$, where $u$ is an unknot in $S^3$. The manifold $W'$ is required to be oriented but the surface $S$ is not. Construct a new cobordism $(W',S')$ of the pairs $(S^3,h)$ and $(S^3,\k)$ by letting $S'$ be the disjoint union of $S$ with the normal circle bundle along a path in $S$ connecting the two boundary components (the surface $S'$ is called $S^{\natural}$ in \cite[Section 4.3]{KM:khovanov}). According to \cite[Proposition 4.4]{KM:khovanov}, the generator $\alpha$ has grading
\medskip
\begin{equation}\label{E:gr}
\gr(\alpha)\, =\, -\ind \DAP (\alpha_u,\alpha) - \dfrac{3}{2}\left(\chi(W') + \sigma(W')\right) - \chi(S')\pmod 4,
\end{equation}

\medskip\noindent
where $\alpha_u$ stands for the special generator in the Floer chain complex of $u$, and we use the fact that $\chi(S) = \chi(S')$. The operator $\DAP (\alpha_u,\alpha)$ refers to the ASD operator on the non-compact manifold obtained from $W'$ by attaching cylindrical ends to the two boundary components; this manifolds is again called $W'$. The connection $A'$ can be any connection on $W'$ which is singular along the surface $S'$ and which limits to flat connections with the holonomies $\alpha_u$ and $\alpha$ on the two ends. The index of $\DAP(\alpha_u,\alpha)$ is understood as the $L^2_{\delta}$ index for a small positive $\delta$.


\subsection{Constructing the cobordism}
Our calculation of the Floer index $\gr(\alpha)$ will use a specific cobordism $(W',S')$ constructed as follows. 

Let  $\Sigma$ be the double branched cover of $S^3$ with branch set the knot $k$. Choose a Seifert surface $F'$ of $k$ and push its interior slightly into the ball $D^4$ so that the resulting surface, which we still call $F'$, is transversal to $\partial D^4 = S^3$. Let $V$ be the double branched cover of $D^4$ with branch set the surface $F'$. Then $V$ is a smooth simply connected spin 4-manifold with boundary $\Sigma$, which admits a handle decomposition with only 0-- and 2--handles, see Akbulut--Kirby  \cite[page 113]{AK}.

Next, choose a point in the interior of the surface $F' \subset D^4$. Excising a small open 4-ball containing that point from $(D^4, F')$ results in a manifold $W'_1$ diffeomorphic to $I \times S^3$ together with the surface $F'_1 = F' - \Int(D^2)$ properly embedded into it, thereby providing a cobordism $(W'_1,F'_1)$ from an unknot to the knot $k$. The double branched cover $W_1 \to W'_1$ with branch set $F'_1$ is a cobordism from $S^3$ to $\Sigma$. The manifold $W_1$ is simply connected because it can be obtained from the simply connected manifold $V$ by excising an open 4-ball.

Similarly, consider the manifold $W'_2 = I\, \times\, S^3$ and surface $F'_2 = I\, \times\, h \subset W'_2$ providing a product cobordism from the Hopf link $h$ to itself. The double branched cover $W_2 \to W'_2$ with branch set $F'_2$ is then a cobordism $W_2 = I \times \RP^3$ from $\RP^3$ to itself.

As the final step of the construction, consider a path $\gamma'_1$ in the surface $F'_1$ connecting its two boundary components. Similarly, consider a path $\gamma'_2$ of the form $I \times \{p\}$ in the surface $F'_2 = I \times H$. Remove tubular neighborhoods of these two paths and glue the resulting manifolds and surfaces together using an orientation reversing diffeomorphism $1 \times h: I \times S^2 \to I \times S^2$. The resulting pair $(W',S')$ is the desired cobordism of the pairs $(S^3,h)$ and $(S^3,\k)$. One can easily see that 
\begin{equation}\label{E:chi}
\chi(W') = \sigma(W') = 0\quad \text{and} \quad \chi(S') = \chi (F') - 1.
\end{equation}

Note that the double branched cover $W \to W'$ with branch set $S'$ is a cobordism from $\RP^3$ to $\Sigma\,\#\,\RP^3$ which can be obtained from the cobordisms $W_1$ and $W_2$ by taking a connected sum along the paths $\gamma_1 \subset W_1$ and $\gamma_2 \subset W_2$ lifting, respectively, the paths $\gamma'_1$ and $\gamma'_2$. To be precise, 
\begin{equation}\label{E:W}
W = \WC_1\,\cup\, \WC_2, 
\end{equation}
where $\WC_1$ and $\WC_2$ are obtained from $W_1$ and $W_2$ by removing tubular neighborhoods of $\gamma_1$ and $\gamma_2$. The identification in \eqref{E:W} is done along a copy of $I \times S^2$. In particular, we see that $\pi_1 W = \Z/2$.


\subsection{$L^2$--index}\label{S:L2}
We will rely on Ruberman \cite{R:forms} and Taubes \cite{T:periodic} in our index calculations. 

Let $\pi: W \to W'$ be the double branched cover with branch set $S'$ constructed in the previous section, and $\tau: W \to W$ the covering translation. Let us consider a representation $\rho: \pi_1^V (W',S') \to SO(3)$ sending the two sets of meridians of $S'$ to $\Ad i$ and $\Ad j$. Then the representation $\pistar\rho: \pi_1 W \to SO(3)$ sends the generator of $\pi_1 W$ to $\Ad k$ and it is equivariant in that $\tau^*(\pistar\rho) = u\cdot \pistar\rho\cdot u^{-1}$ with $u = \Ad i$, compare with Example \ref{E:rp3}. The representation $\rho$ restricts to $\alpha_u$ and $\alpha$ over the two ends of $W'$, therefore,  $\pistar\rho$ makes $W$ into a flat cobordism between $\gamma: \pi_1 (\RP^3) \to SO(3)$ and $\theta * \gamma: \pi_1 \Sigma\, *\, \pi_1 (\RP^3) \to SO(3)$, where $\gamma$ is the representation of Example \ref{E:rp3}.

Let $A$ and $A'$ be flat connections on $W$ and $W'$ whose holonomies are, respectively, $\pistar\rho$ and $\rho$. We will use $A'$ as the twisting connection of the operator $\DAP(\alpha_u,\alpha)$. Instead of computing the index of this operator, we will compute the equivariant index $\ind \D^{\tau}_A (\gamma,\,\theta*\gamma)$ of its pull--back to $W$. The latter index equals minus the equivariant index of the elliptic complex
\smallskip
\begin{equation}\label{E:def-complex}
\begin{CD}
\Omega^{0}(W, \ad P) @> - d_A >> \Omega^{1}(W, \ad P) @> d^+_A > \empty > \Omega_+^{2}(W, \ad P) 
\end{CD}
\end{equation}

\smallskip\noindent
The equivariance here is understood with respect to a lift of $\tau: W \to W$ to the bundle $\ad P$ which has second order on the fibers over the fixed point set. The connection $A$ is equivariant with respect to this lift, hence it splits the coefficient bundle $\ad P$ into a sum of three real line bundles corresponding to $\Ad k = \diag (-1,-1,1)$. Accordingly, the complex \eqref{E:def-complex} splits into a sum of three elliptic complexes, one with the trivial real coefficients and two with the twisted coefficients. Application of \cite[Proposition 4.1]{R:forms} to the former complex and of \cite[Corollary 4.2]{R:forms} to the latter two reduces the index problem to computing the singular cohomology 
\smallskip
\[
H^k (W; \ad \pistar\rho) = H^k (W; \R)\,\oplus\,H^k (W; \R_-)\,\oplus\,H^k (W; \R_-),\;\; k = 0, 1, 2,
\]

\smallskip\noindent
where $\R_-$ stands for the real line coefficients on which $\Z/2$ acts as multiplication by $-1$, and their equivariant versions. 

The zeroth equivariant cohomology of complex \eqref{E:def-complex} vanishes because $H^0 (W;\R_-) = 0$ and the lift of $\tau$ acts as minus identity on the remaining group $H^0 (W;\R) = \R$. This vanishing result could also be derived directly from the irreducibility of the singular connection $A'$. The next two subsections are dedicated to computing the first and second cohomology of \eqref{E:def-complex}.


\subsection{Trivial coefficients}
Our computation will be based on the Mayer--Vietoris exact sequence applied twice, first to compute cohomology of $\WC_1$ and $\WC_2$, and then to compute cohomology of $W = \WC_1\, \cup\, \WC_2$. 

The cohomology groups of $\WC_1$ and $W_1 = \WC_1\, \cup\, (I \times D^3)$ are related by the following Mayer--Vietoris exact sequence

\begin{equation*}
\begin{tikzpicture}
\draw (0,2) node (p) {$0$};
\draw (2,2) node (q) {$H^{1}(W_1;\R)$};
\draw (6,2) node (r) {$H^{1}(\WC_1;\R)$};
\draw (10,2) node (s) {$0$};
\draw(12,2) node (t) {};
\draw[->](p)--(q);
\draw[->](q)--(r);
\draw[->](r)--(s);
\draw[->](s)--(t) ;

\draw (0,1) node (p) {};
\draw (2,1) node (q) {$H^{2}(W_1;\R)$};
\draw (6,1) node (r) {$H^{2}(\WC_1;\R)$};
\draw (10,1) node (s) {$H^{2}(I \times S^2;\R)$};
\draw(12,1) node (t) {};
\draw[->](p)--(q);
\draw[->](q)--(r); 
\draw[->](r)--(s); 
\draw[->](s)--(t) ;

\draw (0,0) node (u) {};
\draw (2,0) node (v) {$H^{3}(W_1;\R)$};
\draw (6,0) node (w) {$H^{3}(\WC_1;\R)$};
\draw (10,0) node (x) {$0$,};
\draw(12,0) node (y) {};
\draw[->](u)--(v)  node [midway,above](TextNode){$\delta$};
\draw[->](v)--(w);
\draw[->](w)--(x);

\end{tikzpicture}
\end{equation*}

\medskip\noindent
Since $W_1$ and therefore $\WC_1$ are simply connected, both $H^1 (W_1;\R)$ and $H^1(\WC_1;\R)$ vanish. Applying the Poincar\'e--Lefschetz duality to the manifold $W_1$ and using the long exact sequence of the pair $(W_1,\p W_1)$ we obtain 
\[
H^3 (W_1;\R) = H_1 (W_1,\p W_1;\R) = \widetilde H_0 (\p W_1;\R) = \R.
\]
Similarly, viewing $\WC_1$ as a manifold whose boundary is a connected sum of the two boundary components of $W_1$, we obtain 
\[
H^3 (\WC_1;\R) = H_1 (\WC_1,\p \WC_1;\R) = \widetilde H_0 (\p \WC_1;\R) = 0.
\]
Therefore, the connecting homomorphism $\delta$ in the above exact sequence must be an isomorphism, which leads to the isomorphisms
\[
H^2(\WC_1;\R) = H^2 (W_1;\R) = H^2 (V;\R).
\]
A similar long exact sequence relates the cohomology of $\WC_2$ and $W_2 = \WC_2\, \cup\, (I \times D^3)$, implying that 
\[
H^2 (\WC_2;\R) = H^2(W_2;\R) = H^2 (\RP^3;\R) = 0.
\]
Since $\pi_1 W_2 = \pi_1 \WC_2 = \Z/2$, both $H^1 (W_2;\R)$ and $H^1(\WC_2;\R)$ vanish. The Mayer--Vietoris exact sequence of the splitting $W = \WC_1\,\cup\,\WC_2$,

\begin{equation*}
\begin{tikzpicture}
\draw (0,2) node (p) {$0$};
\draw (2,2) node (q) {$H^{1}(W;\R)$};
\draw (6,2) node (r) {$H^{1}(\WC_1;\R)\,\oplus\, H^{1} (\WC_2;\R)$};
\draw (10,2) node (s) {$0$};
\draw(12,2) node (t) {};
\draw[->](p)--(q);
\draw[->](q)--(r);
\draw[->](r)--(s);
\draw[->](s)--(t) ;

\draw (0,1) node (p) {};
\draw (2,1) node (q) {$H^{2}(W;\R)$};
\draw (6,1) node (r) {$H^{2}(\WC_1;\R)\,\oplus\, H^{2} (\WC_2;\R)$};
\draw (10,1) node (s) {$H^{2}(I \times S^2;\R)$};
\draw(12,1) node (t) {};
\draw[->](p)--(q);
\draw[->](q)--(r); 
\draw[->](r)--(s); 
\draw[->](s)--(t) ;

\draw (0,0) node (u) {};
\draw (2,0) node (v) {$H^{3}(W;\R)$};
\draw (6,0) node (w) {$H^{3}(\WC_1;\R)\,\oplus\, H^{3} (\WC_2;\R)$};
\draw (10,0) node (x) {$0$};
\draw(12,0) node (y) {};
\draw[->](u)--(v); 
\draw[->](v)--(w);
\draw[->](w)--(x);

\end{tikzpicture}
\end{equation*}

\medskip\noindent
together with the isomorphisms $H^3 (W;\R) = H_1 (W,\p W;\R) = \widetilde H_0 (\p W;\R) = \R$ and $\pi_1 W = \Z/2$, implies that
\[
H^1 (W;\R) = 0 \quad \text{and} \quad H^2 (W;\R) = H^2 (V;\R).
\]


\subsection{Twisted coefficients}
We will now do a similar calculation using the Mayer--Vietoris sequence of $W =\WC_1\,\cup\,\WC_2$ with twisted coefficients. Since $\WC_1$ is simply connected, the twisted coefficients $\R_-$ pull back to the trivial $\R$--coefficients over $\WC_1$ and the cohomology calculations from the previous section are unchanged. A direct calculation using homotopy equivalences $W_2\, \simeq\, \RP^3$ and $\WC_2 \simeq \RP^2$ shows that 
\[
H^1(\WC_2; \R_-) = 0\quad \text{and} \quad H^2 (\WC_2; \R_-) = \R.
\] 
The latter isomorphism is induced by the inclusion $I \times S^2 \to \WC_2$, which can be easily seen from the Mayer--Vietoris exact sequence of $W_2 = \WC_2\,\cup\,(I \times D^3)$. Now, consider the Mayer--Vietoris exact sequence of the splitting $W = \WC_1\,\cup\,\WC_2$ with twisted $\R$--coefficients,

 \begin{equation*}
\begin{tikzpicture}
\draw (0,2) node (p) {$0$};
\draw (2,2) node (q) {$H^{1}(W;\R_-)$};
\draw (6,2) node (r) {$H^{1}(\WC_1;\R)\,\oplus\, H^{1} (\WC_2;\R_-)$};
\draw (10,2) node (s) {$0$};
\draw(12,2) node (t) {};
\draw[->](p)--(q);
\draw[->](q)--(r);
\draw[->](r)--(s);
\draw[->](s)--(t) ;

\draw (0,1) node (p) {};
\draw (2,1) node (q) {$H^{2}(W;\R_-)$};
\draw (6,1) node (r) {$H^{2}(\WC_1;\R)\,\oplus\, H^{2} (\WC_2;\R_-)$};
\draw (10,1) node (s) {$H^{2}(I \times S^2;\R)$};
\draw(12,1) node (t) {};
\draw[->](p)--(q);
\draw[->](q)--(r); 
\draw[->](r)--(s); 
\draw[->](s)--(t) ;

\draw (0,0) node (u) {};
\draw (2,0) node (v) {$H^{3}(W;\R_-)$};
\draw (6,0) node (w) {$H^{3}(\WC_1;\R)\,\oplus\, H^{3} (\WC_2;\R_-)$};
\draw (10,0) node (x) {$0$.};
\draw(12,0) node (y) {};
\draw[->](u)--(v);  
\draw[->](v)--(w);
\draw[->](w)--(x);

\end{tikzpicture}
\end{equation*}

\medskip\noindent
Keeping in mind that the map $H^2 (\WC_1;\R) \to H^2 (I\times S^2;\R)$ in this sequence is zero and the map $H^2 (\WC_2;\R_-) \to H^2 (I\times S^2;\R)$ is an isomorphism $\R \to \R$, we conclude that
\[
H^1 (W;\R_-) = 0 \quad \text{and} \quad H^2 (W;\R_-) = H^2 (V;\R).
\]


\subsection{Equivariant cohomology}\label{S:eq}
Combining results of the previous two sections we obtain $H^1 (W; \ad P) = 0$ and $H^2 (W; \ad P) = H^2 (V; \R^3)$. The action of $\tau$ is compatible with these isomorphisms, from which we immediately conclude that 
\[
H^1_{\tau} (W; \ad P) = 0
\]
and $H^2_{\tau} (W; \ad P)$ is the fixed point set of the map $H^2 (V; \R^3) \to H^2 (V; \R^3)$ obtained by twisting $\tau^*: H^2 (V; \R) \to H^2 (V; \R)$ by the action on the coefficients $\R^3 \to \R^3$. The involution $\tau^*$ is minus the identity, which follows from the usual transfer argument applied to the covering $V \to D^4$, while the action on the coefficients is given by an  $SO(3)$ operator of second order. Such an operator must have a single eigenvalue $1$ and a double eigenvalue $-1$, which leads us to the conclusion that $\rk H^2_{\tau}\, (W; \ad P) = 2\cdot b_2 (V)$. Similarly, 
\[
\rk H^2_{\tau,+} (W; \ad P) = 2\cdot b_2^+ (V). 
\]


\subsection{Proof of Theorem \ref{T:gr}}
It follows from the discussion in Section \ref{S:L2} and the calculation in Section \ref{S:eq} that
\[
\ind \DAP (\alpha_u,\alpha)\, =\, \rk H^1_{\tau}\, (W;\ad P) - \rk H^2_{+,\tau} (W;\ad P) = -2\cdot b_2^+ (V).
\]
Taking into account \eqref{E:gr} and \eqref{E:chi}, we obtain the formula
\[
\gr (\alpha)\, =\, 2\cdot b_2^+ (V)\, - \, \chi (F') + 1 \pmod 4.
\]
To simplify it, let us compute $\chi (V)$ in two different ways: $\chi (V) = 1 + b_2^+ (V) + b_2^- (V)$ by definition, and $\chi (V) = 2 \chi (D^4) - \chi (F') = 2 - \chi (F')$ using the fact that $V$ is a double branched cover of $D^4$ with branch set $F'$. Combining these formulas with the knot signature formula of Viro \cite{V}, we obtain the desired result (remember that $\sign k$ is always even)\,:
\[
\gr (\alpha)\, =\, - \sign V\, =\, - \sign k\; = \; \sign k \pmod 4.
\]

\medskip


\section{Knot homology: gradings of other generators}\label{S:other}
Proposition \ref{P:action} identified the critical points of the Chern--Simons functional with the fibers of the map $\psi: \PR_c (\KN, SU(2)) \to \mathcal R_0 (K,SO(3))$. Assuming that the space $\mathcal R_0 (K,SO(3))$ is non-degenerate, all of these fibers (with the exception of the special generator $\alpha$) are Morse--Bott circles. In this section, we will compute their Floer gradings using the equivariant index theory of Section \ref{S:eq-ind}. The actual generators of the chain complex $\IC(k)$ are then obtained by perturbing each Morse--Bott circle of index $\mu$ into two points of indices $\mu$ and $\mu + 1$ as in \cite{HHK}. Our index calculation will depend on whether an irreducible trace-free representation $\rho_0: \pi_1 K \to SO(3)$ giving rise to the Morse--Bott circle $C(\rho_0)$ is dihedral or not. The two cases will be considered separately starting with the case when $\rho_0$ is dihedral. If $\mathcal R_0 (K,SO(3))$ fails to be non-degenerate, similar results hold after additional perturbations.


\subsection{Dihedral representations}\label{S:dih-index}
Let $\rho_0: \pi_1 K \to SO(3)$ be an irreducible trace-free dihedral representation. The pull back via $\pi: M \to \Sigma$ identifies the Morse--Bott circle $C(\rho_0)$ with the circle of the conjugacy classes of equivariant representations of the form $\beta * \gamma: \pi_1 Y\,*\,\Z/2 \to SO(3)$, where $\beta$ is a non-trivial reducible representation of $\pi_1 Y$ and $\gamma$ is the representation of $\Z/2$ sending the generator to $\Ad k$. These representations are equivariant in that $\tau^* (\beta * \gamma) = u\cdot (\beta * \gamma)\cdot u^{-1}$ with $u = \Ad i$, see Remark \ref{R:conj}.

We wish to compute the equivariant index $\ind \D^{\tau}_A\,(\beta*\gamma,\theta*\gamma)$, where $A$ is any equivariant connection on the cylinder $\R\, \times\, (Y\,\#\,\RP^3)$ limiting to the flat connections $\beta*\gamma$ and $\theta *\gamma$ over the negative and positive ends, respectively. The Morse--Bott index of the circle corresponding to $\beta * \gamma$ will then equal 
\begin{equation}\label{E:red}
\mu\; =\; \ind \D^{\tau}_A\, (\beta*\gamma,\theta*\gamma)\, +\, \sign k\pmod 4.
\end{equation}

\smallskip

\begin{prop}\label{P:red}
Let $\beta: \pi_1 Y \to SO(3)$ be a non-trivial equivariant reducible representation then, for any equivariant connection $B$ on the cylinder $\R \times Y$ limiting to the flat connections $\beta$ and $\theta$ over the negative and positive ends, 
\[
\ind \D^{\tau}_A\, (\beta*\gamma,\theta*\gamma)\; =\; \ind \D^{\tau}_B\, (\beta,\theta)\pmod 4.
\]
\end{prop}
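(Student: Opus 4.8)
The plan is to strip off the $\RP^3$ connected summand together with the flat connection $\gamma$ that it carries at \emph{both} ends of the cylinder, by an excision argument for the equivariant index; since $\gamma$ occurs with the same holonomy at the two ends, its net contribution should vanish modulo $4$, leaving exactly the $Y$--part of the index.

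First I would set up the geometric decomposition. The cylinder $\R\times(Y\,\#\,\RP^3)$ is obtained from $\R\times Y$ and $\R\times\RP^3$ by removing tubular neighbourhoods of two properly embedded arcs, one lying in the branch locus of $\R\times Y$ and one in the branch locus of $\R\times\RP^3$, and gluing the pieces along $\R\times S^2$ (the cylindrical analogue of the connect--sum construction of Section \ref{S:special}); since the arcs lie in the fixed--point sets, the decomposition is $\tau$--equivariant. A short cohomological check shows that $w_2(P)$ restricts to zero on the connect--sum sphere: the inclusion $S^2\hookrightarrow\RP^3\setminus B^3$ is homotopic to the orientation double cover $S^2\to\RP^2$, which annihilates $H^2(\,\cdot\,;\Z/2)$. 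Hence $\ad P$ is trivial near the neck, the flat connection there is the product connection, and since $\beta*\gamma$ and $\theta*\gamma$ restrict to $\gamma$ on the $\RP^3$ factor at both ends, I may choose the equivariant connection $A$ so that it equals the given connection $B$ on $\R\times(Y\setminus B^3)$ and the translation--invariant product connection built from $\gamma$ on $\R\times(\RP^3\setminus B^3)$, with $B$ arranged flat near the removed ball. By the usual independence of the Floer grading on the choice of connection, this particular $A$ computes $\ind\D^{\tau}_A(\beta*\gamma,\theta*\gamma)$ modulo $4$.

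Next I would cap each piece off with $\R\times D^3$ carrying the trivial connection and the rotational $\tau$--action whose lift has order two on the fibres over the fixed arc, and invoke the additivity of the index under excision (in the spirit of Taubes \cite{T:periodic} and Ruberman \cite{R:forms}), respecting the $\Z/2$--action throughout. Since $\R\times(Y\setminus B^3)\cup\R\times D^3=\R\times Y$ recovers $B$, $\R\times D^3\cup\R\times(\RP^3\setminus B^3)=\R\times\RP^3$ carries the product $\gamma$--connection $C$, and $\R\times D^3\cup\R\times D^3=\R\times S^3$ carries the trivial connection, excision yields
\[
\ind\D^{\tau}_A(\beta*\gamma,\theta*\gamma)\,+\,\ind\D^{\tau}_{\theta}(\R\times S^3)\;=\;\ind\D^{\tau}_B(\beta,\theta)\,+\,\ind\D^{\tau}_C(\gamma,\gamma),
\]
so the proposition reduces to the congruence $\ind\D^{\tau}_C(\gamma,\gamma)\equiv\ind\D^{\tau}_{\theta}(\R\times S^3)\pmod 4$. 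Equivalently, one could bypass excision and compare the two indices directly through Proposition \ref{P:eq-ind}: the bulk terms and the $\eta$-- and $\eta^{\tau}$--corrections coming from the $\RP^3$ factor cancel between the two ends, and only the $h$-- and $h^{\tau}$--data survive.

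It then remains to compute the two cap indices and to see that they agree modulo $4$. Each is the $L^2_{\delta}$ index ($\delta\ge 0$ small, as in Section \ref{S:eq-ind}) of an operator $\p/\p t+K_{\beta}$ over a cylinder with a constant flat limit, restricted to the $\tilde\tau^*$--invariant forms, hence is determined by the groups $H^0\oplus H^1$ of the relevant flat bundle together with the traces of $\tilde\tau^*$ on them, via $\ind\D^{\tau}=\tfrac12\ind\DA+\tfrac12\ind(\tau,\DA)$. Here one uses that $\ad\gamma$ splits over $\RP^3$ as $\R\oplus\R_-\oplus\R_-$ according to $\Ad k=\diag(-1,-1,1)$, that $\R$ and $\R_-$ have the expected small cohomology over $\RP^3$, and the explicit description of $\tilde\tau$ over the fixed--point set as conjugation by the element $u$ of Remark \ref{R:conj}. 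The main obstacle I anticipate is precisely this last step: tracking the signs by which $\tilde\tau^*$ acts on the small cohomology of $\RP^3$, $S^3$ and $Y$, and confirming that the $\RP^3$-- and $S^3$--cap contributions together with the $\eta$--invariant corrections cancel modulo $4$. The analytic input — Fredholmness in the weighted setting, invariance of the index under neck--stretching, additivity of $\eta_{\beta}$ and $\eta^{\tau}_{\beta}$ under connected sum, and the $G$--index theorem of Donnelly \cite{don} — is all available from earlier in the paper and the cited references.
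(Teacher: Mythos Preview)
Your excision formula is off by one, so the reduction you propose actually fails. A direct computation using Proposition \ref{P:eq-ind} gives $\ind\D^{\tau}_{\theta}(\R\times S^3)=-\tfrac12 h_{\theta}-\tfrac12 h^{\tau}_{\theta}=-\tfrac32+\tfrac12=-1$ (this is the Example following Proposition \ref{P:eq-ind}), while for $\R\times\RP^3$ with the product $\gamma$--connection one has $h_{\gamma}=1$, $h^{\tau}_{\gamma}=-1$ (the stabiliser line is spanned by $k$ and $\Ad i$ sends $k\mapsto -k$), so $\ind\D^{\tau}_{C}(\gamma,\gamma)=-\tfrac12+\tfrac12=0$. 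Thus $\ind\D^{\tau}_{C}(\gamma,\gamma)\not\equiv\ind\D^{\tau}_{\theta}(\R\times S^3)\pmod 4$, and since the paper shows the two sides of the proposition are in fact \emph{equal} (not just congruent), your excision identity cannot hold as written. The reason is that excision along the non-compact hypersurface $\R\times S^2$ is not covered by the standard theorems you cite: the limiting operator on $S^2$ has a three-dimensional kernel at each end $t\to\pm\infty$, and the $h$-- and $h^{\tau}$--corrections in the $L^2_{\delta}$ index do not split additively over a fibrewise connected sum.

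The paper's proof is essentially your ``alternative'' route via Proposition \ref{P:eq-ind}, but it is not as short as you suggest. One first chooses $A$ to agree with $\gamma$ on $\R\times(\RP^3-D^3)$ so the $p_1$--integral localises to $\R\times Y$. The $\eta$-- and $\eta^{\tau}$--terms do \emph{not} cancel between the two ends directly; rather, one builds a flat equivariant cobordism $W$ from $Y\sqcup\RP^3$ to $Y\#\RP^3$ by a $1$--handle and uses the APS and Donnelly signature theorems on $W$ to prove the additivity $\rho_{\beta*\gamma}=\rho_{\beta}+\rho_{\gamma}$ and $\rho^{\tau}_{\beta*\gamma}=\rho^{\tau}_{\beta}+\rho^{\tau}_{\gamma}$, after which the $\rho_{\gamma}$ contributions cancel. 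What remains is a difference of $h$-- and $h^{\tau}$--terms, computed via the Mayer--Vietoris sequence of $Y\#\RP^3=Y_0\cup\RP^3_0$ with $\ad(\beta*\gamma)$ coefficients; one has to treat separately the cases where $\beta*\gamma$ is reducible or irreducible, and in each case the total difference is exactly zero. The sign-tracking you anticipate as the ``main obstacle'' is indeed the heart of the argument, and it is what your excision shortcut tries, unsuccessfully, to avoid.
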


\begin{proof}
To compute the index on the left-hand side of this formula, we will apply the formula of Proposition \ref{P:eq-ind} to the manifold $X = \R \times (Y\,\#\,\RP^3)$ with two product ends. Since the metric on $X$ is a product metric, the terms $p_1\,(TX)$ and $e (TX)$ in the integrand
\smallskip
\[
\widehat A\,(X) \ch (S^+) \ch (\ad P)_{\C}\;=\;-2\,p_1 (A) - \frac 1 2\;p_1\,(TX) - \frac 3 2\,e\,(TX)
\]

\medskip\noindent
will vanish, as will the topological terms $\chi(F)$ and $F\cdot F$, leading to the formula 
\smallskip
\begin{alignat*}{1}
\ind \D^{\tau}_A\,(\beta*\gamma,\theta*\gamma) = -\int_X\; p_1 (A) & - \frac 1 4\,(h_{\theta*\gamma} - \rho_{\theta*\gamma}) - \frac 1 4\,(h_{\beta*\gamma} + \rho_{\beta*\gamma}) \\
&- \frac 1 4\,(h^{\tau}_{\theta*\gamma} - \rho^{\tau}_{\theta*\gamma}) - \frac 1 4\,(h^{\tau}_{\beta*\gamma} + \rho^{\tau}_{\beta*\gamma})
\end{alignat*}

\medskip\noindent
where $\rho_{\beta*\gamma} = \eta_{\beta*\gamma}(0) - \eta_{\theta}(0)$ and $\rho^{\tau}_{\beta*\gamma} = \eta^{\tau}_{\beta*\gamma}(0) - \eta^{\tau}_{\theta}(0)$ are $\rho$--invariants of the manifold $Y\,\#\,\RP^3$. 

The connection $A$ in this formula is any equivariant connection limiting to the flat connections $\beta*\gamma$ and $\theta*\gamma$ at the two ends of $X$, hence we are free to choose $A$ to equal $\gamma$ over $\R \times (\RP^3 - D^3)$ and to be trivial in the gluing region. This evaluates the integral term in the above formula as follows 
\medskip
\[
\int_X\; p_1 (A) = \int_{\,\R\times Y}\; p_1 (A).
\]

\medskip
To evaluate the $\rho$--invariants, build a cobordism $W$ from the disjoint union $Y\cup\,\RP^3$ to the connected sum $Y\,\#\,\RP^3$ by attaching a 1-handle to $[0,1] \times (Y \cup\,\RP^3)$. The flat connection $\beta*\gamma$ extends to $W$ making it into a flat cobordism from $(Y,\beta)\,\cup\,(\RP^3,\gamma)$ to $(Y \#\,\RP^3,\beta*\gamma)$. It then follows from \cite[Theorem 2.4]{APS:II} that
\[
\rho_{\beta*\gamma} - \rho_\beta - \rho_\gamma\, =\, \sign_{\beta*\gamma} (W) - 3\sign (W),
\]
where $\rho_{\beta}$ and $\rho_{\gamma}$ are $\rho$--invariants of the manifolds $Y$ and $\RP^3$, respectively. One can easily see from the description of $W$ that both signature terms in the above formula vanish implying that $\rho_{\beta*\gamma} = \rho_{\beta} + \rho_{\gamma}$. Since the involution $\tau$ extends to $W$, a similar argument using the index theorem of Donnelly \cite{don} instead of \cite[Theorem 2.4]{APS:II} shows that $\rho^{\tau}_{\beta*\gamma} = \rho^{\tau}_{\beta} + \rho^{\tau}_{\gamma}$. Similar formulas also hold with $\theta*\gamma$ in place of $\beta*\gamma$.

Plugging all of this back into the above index formula and keeping in mind that $\rho_{\theta} = \rho^{\tau}_{\theta} = 0$, we obtain
\medskip
\begin{alignat*}{1}
\ind \D^{\tau}_A\,(\beta*\gamma,\theta*\gamma) = -\int_{\R \times Y}\; p_1 (A) & - \frac 1 4\,(h_{\beta*\gamma} + \rho_{\beta}) - \frac 1 4\,h_{\theta*\gamma} \\
&- \frac 1 4\,(h^{\tau}_{\beta*\gamma} + \rho^{\tau}_{\beta}) - \frac 1 4\,h^{\tau}_{\theta*\gamma}.\quad
\end{alignat*}

\medskip\noindent
On the other hand, one can apply the formula of Proposition \ref{P:eq-ind} to the manifold $X = \R \times Y$ to obtain 
\medskip
\begin{alignat*}{1}
\;\;\ind \D^{\tau}_A\,(\beta,\theta) = -\int_{\R\times Y}\; p_1 (A) & - \frac 1 4\,(h_{\beta} + \rho_{\beta}) - \frac 1 4\,h_{\theta} \\
&- \frac 1 4\,(h^{\tau}_{\beta} + \rho^{\tau}_{\beta}) - \frac 1 4\,h^{\tau}_{\theta}.
\end{alignat*}

\medskip\noindent
Therefore,
\medskip
\begin{alignat*}{1}
\qquad \ind \D^{\tau}_A\,(\beta*\gamma,\theta*\gamma) - \ind \D^{\tau}_A\,(\beta,\theta) = &
- \frac 1 4\,(h_{\beta*\gamma} - h_{\beta}) - \frac 1 4\,(h_{\theta*\gamma} - h_{\theta}) \\ &
- \frac 1 4\,(h^{\tau}_{\beta*\gamma} - h^{\tau}_{\beta}) - \frac 1 4\,(h^{\tau}_{\theta*\gamma} - h^{\tau}_{\theta}),
\end{alignat*}

\medskip\noindent
and the proof of the proposition reduces to a calculation with twisted cohomology. 

Since $Y$ is a rational homology sphere, $H^1 (Y; \ad\theta) = 0$. Therefore, $h_{\theta} = \dim H^0 (Y; \ad\theta) = 3$ and $h^{\tau}_{\theta} = \tr\,(\Ad u) = -1$. It follows from a calculation in Section \ref{S:special} that $H^1 (Y\,\#\,\RP^3; \ad(\theta*\gamma)) = 0$. Therefore, $h_{\theta*\gamma} = \dim H^0 (Y; \ad(\theta*\gamma)) = 1$ because $H^0 (Y; \ad(\theta*\gamma))$ is the $(+1)$--eigenspace of $\Ad (k): \so(3) \to \so(3)$. The operator $\Ad i$ acts as minus identity on the $(+1)$--eigenspace of $\Ad k$ making $h^{\tau}_{\theta*\gamma} = -1$.

The calculation with $\beta*\gamma$ will rely on the Mayer--Vietoris exact sequence of the splitting $Y\,\#\,\RP^3 = Y_0\,\cup\,\RP^3_0$ with twisted coefficients\,:
\medskip
\[
\begin{tikzpicture}
\draw (0,2) node (p) {$0$};
\draw (3,2) node (q) {$H^0 (Y\,\#\,\RP^3; \,\ad(\beta*\gamma))$};
\draw (8.5,2) node (r) {$H^0 (Y;\ad\beta)\oplus H^0 (\RP^3;\ad\gamma)$};
\draw (12,2) node (s) {};
\draw[->](p)--(q);
\draw[->](q)--(r);
\draw[->](r)--(s);

\draw (2,1) node (p) {};
\draw (4.5,1) node (q) {$H^0 (S^2;\ad\theta)$};
\draw (9,1) node (r) {$H^1 (Y\,\#\,\RP^3; \,\ad(\beta*\gamma))$};
\draw(12,1) node (s) {};
\draw[->](p)--(q);
\draw[->](q)--(r); 
\draw[->](r)--(s); 

\draw (4,0) node (u) {};
\draw (8,0) node (v) {$H^1 (Y;\ad\beta)\oplus H^1 (\RP^3;\ad\gamma)$};
\draw (11.75,0) node (w) {$0$};
\draw[->](u)--(v); 
\draw[->](v)--(w);

\end{tikzpicture}
\]

\noindent
Since $\beta$ is reducible but non-trivial, $H^0 (Y; \ad\beta) = \R$. Therefore, keeping in mind that $H^0 (S^2; \ad\theta) = \R^3$, $H^0 (\RP^3; \ad\gamma) = \R$, and $H^1 (\RP^3; \ad\gamma) = 0$, we obtain
\[
h_{\beta*\gamma} - h_{\beta}\; =\; 2\cdot \dim H^0\,(Y\,\#\,\RP^3; \ad (\beta*\gamma)).
\]
The involution $\tau$ induces involutions $\tilde\tau^*$ on each of the groups in the Mayer--Vietoris exact sequence comprising a chain map. Keeping in mind that the traces of $\tilde\tau^*$ are equal to $-1$ on both $H^0 (S^2; \ad\theta) = \R^3$ and $H^0 (\RP^3; \ad\gamma) = \R$, we obtain
\[
h^{\tau}_{\beta*\gamma} - h^{\tau}_{\beta} = 2\tr \left(\,\tilde\tau^* |\, H^0\,(Y\,\#\,\RP^3; \,\ad (\beta*\gamma))\right) - 2\tr \left(\,\tilde\tau^* |\, H^0\,(Y; \ad\beta)\right).
\]
Even though both $\beta$ and $\gamma$ are reducible, the representation $\beta*\gamma$ may be either reducible or irreducible. In the former case, $H^0\,(Y\,\#\,\RP^3; \,\ad (\beta*\gamma)) = \R$ is the $(+1)$--eigenspace of the operator $\Ad k: \so(3) \to \so(3)$ on which $\tilde\tau^*$ acts as minus identity, therefore, $h_{\beta*\gamma} - h_{\beta} = 2$ and $h^{\tau}_{\beta*\gamma} - h^{\tau}_{\beta} = 0$. In the latter case, $H^0\,(Y\,\#\,\RP^3; \,\ad (\beta*\gamma)) = 0$, therefore, $h_{\beta*\gamma} - h_{\beta} = 0$ and $h^{\tau}_{\beta*\gamma} - h^{\tau}_{\beta} = 2$. In both cases, we conclude that
\[
\ind \D^{\tau}_A\,(\beta*\gamma,\theta*\gamma)\, =\, \ind \D^{\tau}_A\,(\beta,\theta).
\]
The result now follows from the fact that $\ind \D^{\tau}_A\,(\beta,\theta) = \ind \D^{\tau}_B\,(\beta,\theta)\pmod 4$ for any choice of connections $A$ and $B$ on the cylinder $\R\,\times\, Y$ limiting to $\beta$ and $\theta$ over the negative and positive ends.
\end{proof}

\begin{remark}\label{R:irred}
The formula of Proposition \ref{P:red} holds as well for equivariant irreducible representations $\beta$, the proof requiring just minor adjustments. 
\end{remark}

Combining Proposition \ref{P:red} with formula \eqref{E:red}, we obtain the following formula for the Floer grading.

\begin{cor}\label{C:final}
Let $\beta: \pi_1 Y \to SO(3)$ be a non-trivial equivariant reducible representation then the Floer grading of the Morse--Bott circle arising from $\beta * \gamma$ is given by
\[
\mu\; =\; \ind \D^{\tau}_B (\beta,\theta)\, +\, \sign k\pmod 4,
\]
where $B$ is an arbitrary equivariant connection on the infinite cylinder $\mathbb R\,\times\,Y$ limiting to $\beta$ and $\theta$ over the negative and positive ends.
\end{cor}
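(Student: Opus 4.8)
The plan is to assemble the corollary from the two ingredients just established, with essentially no new work. Recall that in the dihedral case the Morse--Bott circle $C(\rho_0)$ has been identified, via pullback through $\pi\colon M \to \Sigma$, with the circle of conjugacy classes of equivariant representations $\beta * \gamma\colon \pi_1 Y * \Z/2 \to SO(3)$, where $\beta$ is the given non-trivial reducible representation of $\pi_1 Y$ and $\gamma$ sends the generator of $\Z/2$ to $\Ad k$; and formula \eqref{E:red} expresses the Morse--Bott index of this circle as
\[
\mu\; =\; \ind \D^{\tau}_A\,(\beta*\gamma,\theta*\gamma)\, +\, \sign k \pmod 4,
\]
for an arbitrary equivariant connection $A$ on $\R\,\times\,(Y\,\#\,\RP^3)$ limiting to $\beta*\gamma$ and $\theta*\gamma$ at the two ends.

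First I would invoke Proposition \ref{P:red} to rewrite the equivariant index on the right-hand side, replacing the index on $\R\,\times\,(Y\,\#\,\RP^3)$ by the index on $\R\,\times\,Y$:
\[
\ind \D^{\tau}_A\,(\beta*\gamma,\theta*\gamma)\; =\; \ind \D^{\tau}_B\,(\beta,\theta) \pmod 4,
\]
for any equivariant connection $B$ on $\R\,\times\,Y$ with the stated limits. Substituting this into the displayed formula for $\mu$ yields the assertion. The one point that deserves a word is that the right-hand side is independent of the choice of $B$: any two equivariant connections on the cylinder limiting to $\beta$ and $\theta$ differ, up to equivariant gauge, by an equivariant gauge transformation, and the difference of their indices is computed by an equivariant index over a closed mapping-torus-type manifold, which by Proposition \ref{P:index-formula} equals $-p_1$ of the associated bundle and hence a multiple of $4$; this is precisely the concluding observation in the proof of Proposition \ref{P:red}.

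There is no genuine obstacle at the level of the corollary itself, which is pure bookkeeping. The substantive work was carried out already in Proposition \ref{P:red}: the reduction of the integral term to $\R\,\times\,Y$ by choosing $A$ product over the $\RP^3$ summand, the additivity of the $\rho$-invariants $\rho_{\beta*\gamma}=\rho_\beta+\rho_\gamma$ and $\rho^{\tau}_{\beta*\gamma}=\rho^{\tau}_\beta+\rho^{\tau}_\gamma$ across the one-handle cobordism via \cite[Theorem 2.4]{APS:II} and Donnelly's $G$-index theorem \cite{don}, and the Mayer--Vietoris computation of the twisted cohomology groups $H^*(Y\,\#\,\RP^3;\ad(\beta*\gamma))$ together with the traces of $\tilde\tau^*$ on them. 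Granting those, the corollary follows in one line.
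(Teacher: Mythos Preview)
Your proof is correct and matches the paper's approach exactly: the paper derives the corollary in a single sentence by combining formula \eqref{E:red} with Proposition \ref{P:red}, and the independence of the choice of $B$ is already absorbed into the concluding line of the proof of Proposition \ref{P:red}, just as you note.
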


The index $\ind \D^{\tau}_B (\beta,\theta)$ in the above corollary can be computed using the formula 
\begin{equation}\label{E:d+d}
\ind \D^{\tau}_B (\beta,\theta)\, =\, \frac 1 2\,\ind \D_B (\beta,\theta) + \frac 1 2\, \ind\, (\tau,\D_B)(\beta,\theta).
\end{equation}

\medskip\noindent
According to Donnelly \cite{don}, 
\begin{multline}\notag
\ind\, (\tau,\D_B)(\beta,\theta) = \frac 1 2 \int_F\; (e (TF) + e(NF)) \\ - \frac 1 2\,(h^{\tau}_{\theta} - \eta^{\tau}_{\theta}(0))(Y) - \frac 1 2\,(h^{\tau}_{\beta} + \eta^{\tau}_{\beta}(0))(Y),
\end{multline}

\medskip\noindent
where the integral term vanishes and $h^{\tau}_{\beta} = h^{\tau}_{\theta} = -1$ as in the proof of Proposition \ref{P:red}. Therefore, 
\begin{equation}\label{E:dtau}
\ind\, (\tau,\D_B)(\beta,\theta)\, =\, 1 - \frac 1 2\cdot \rho^{\tau}_{\beta} (Y).
\end{equation}
The $\rho$--invariants in this formula are difficult to compute in general but they can be shown to vanish in several special cases, like that of two-bridge knots discussed in Section \ref{S:bridge}.


\subsection{Non-dihedral representations}\label{S:non-dih}
Let $\rho_0: \pi_1 K \to SO(3)$ be an irreducible trace-free representation which is not dihedral, and assume that it is non-degenerate. Proposition \ref{P:action}(c) then tells us that the fiber $C(\rho_0)$ consists of two circles which are permuted by the involution $\chi_k$.

\begin{lemma}
The involution $\chi_k$ permuting the two circles in $C(\rho_0)$ has degree zero mod 4.
\end{lemma}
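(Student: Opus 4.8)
Our plan is to compute the \emph{degree} of $\chi_k$ --- the relative Floer grading $\gr(\chi_k\cdot\rho,\rho)\pmod 4$ between a point $\rho$ on one of the two circles making up $C(\rho_0)$ and its image $\chi_k\cdot\rho$ on the other --- by the mapping-torus argument from the proof of Lemma \ref{L:deg} in Section \ref{S:deg}. Choose a gauge transformation $g\in\MG$ on $(S^3,\k)$ representing $\chi_k\in\MG/\MG_S = H^1(\KN;\Z/2)$ and form its mapping torus orbifold $SO(3)$ bundle $P_0$ over $S^1\times S^3$. As in Section \ref{S:deg}, $\gr(\chi_k\cdot\rho,\rho)$ equals, modulo $4$, the index of the ASD operator of any orbifold connection in $P_0$; this is the index of an elliptic operator over a \emph{closed} orbifold, so it is insensitive to the fact that $C(\rho_0)$ is Morse--Bott rather than a collection of non-degenerate critical points. (Equivalently, an excision argument shows that the two circles of $C(\rho_0)$ have Morse--Bott indices differing by exactly this quantity.)

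Passing to the double branched cover $M = Y\,\#\,\RP^3$, where $Y$ is the double branched cover of $k$, the pulled-back bundle $Q_0 = \pi^*P_0$ lives over $S^1\times M$. Viewing the orbifold index as an equivariant index and applying Proposition \ref{P:index-formula} with the product metric --- so that the signature, Euler characteristic, and fixed-point terms vanish --- gives $\gr(\chi_k\cdot\rho,\rho) = -p_1(Q_0)\pmod 4$. By Braam--Donaldson \cite[Part II, Propositions 1.9 and 1.13]{BD}, exactly as in Section \ref{S:deg},
\[
p_1(Q_0)\; =\; 2\,(\,\eta\cup w_2(Q)\, +\, \eta\cup\eta\cup\eta\,)[M]\pmod 4,
\]
where $Q = \pi^*P$, and $\eta = \pi^*\chi_k\in H^1(M;\Z/2)$ is the generator --- it is non-zero by Lemma \ref{L:branched}, since $\pi_*$ sends the generator of $H_1(M;\Z/2)$ to $\mu_k + \mu_\ell$ --- while $w_2(Q)$ generates $H^2(M;\Z/2) = \Z/2$ by Proposition \ref{P:Q}.

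It remains to evaluate the right-hand side using the cup-product structure of $M$. Since $\k = k\,\cup\,\ell$ has $\lk(k,\ell) = \pm1$, which is odd, Proposition \ref{P:H2} gives $\eta\cup\eta = w_2(Q)$ in $H^2(M;\Z/2)$, both being the unique non-zero element; hence $\eta\cup w_2(Q) = \eta\cup\eta\cup\eta$, the two terms cancel in $H^3(M;\Z/2)$, and $p_1(Q_0)\equiv 0\pmod 4$. Therefore $\gr(\chi_k\cdot\rho,\rho)\equiv 0\pmod 4$, which is the assertion. The only points needing attention are the identification $\pi^*\chi_k = \eta$ through Lemma \ref{L:branched} and the remark that the mapping-torus index formula applies verbatim in the Morse--Bott situation; there is no serious obstacle here, the computation being precisely the $\delta = 0$ instance of Lemma \ref{L:deg} specialized to the link $\k$.
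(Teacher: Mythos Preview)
Your proof is correct and follows essentially the same route as the paper: reduce to the mapping-torus index on $S^1\times S^3$, pass to the double branched cover $M = Y\,\#\,\RP^3$, apply the Braam--Donaldson formula for $p_1(Q_0)$, and evaluate using the ring structure of $H^*(M;\Z/2)$ for the odd linking number $\lk(k,\ell)=\pm 1$. The paper's own proof is a one-line reference to Lemma~\ref{L:deg}, with the remark that the first two displayed formulas of Section~\ref{S:deg} acquire a $+1$ shift to account for the one-dimensional critical sets; your treatment of this point---noting that the closed-manifold index is insensitive to the Morse--Bott degeneracy and equals the difference of Morse--Bott indices via excision---is an equivalent way to keep the books, and arguably cleaner.
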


\begin{proof}
This follows as in Lemma \ref{L:deg} whose proof in Section \ref{S:deg} needs to be amended to allow for the one-dimensional critical point sets $C(\rho_0)$. This is easily accomplished by replacing $\gr (\chi_1\cdot\rho,\rho)$ with $\gr (\chi_1\cdot\rho,\rho) + 1$ in the first two displayed formulas. 
\end{proof}

Therefore, the two circles in $C(\rho_0)$ have the same Morse--Bott index $\mu$. Perturbing both of them, we obtain four generators, two of grading $\mu$ and two of grading $\mu + 1$. The calculation of the previous section leading up to the formula of Corollary \ref{C:final} can be easily amended to work in the current situation, producing the following result.

\begin{prop}\label{P:final}
Let $\beta: \pi_1 Y \to SO(3)$ be an irreducible representation then the Floer grading of the two Morse--Bott circles arising from $\beta * \gamma$ is
\[
\mu\; =\; \ind \D^{\tau}_B (\beta,\theta)\, +\, \sign k\pmod 4,
\]
where $B$ is an arbitrary equivariant connection on the infinite cylinder $\mathbb R\,\times\,Y$ limiting to $\beta$ and $\theta$ over the negative and positive ends.
\end{prop}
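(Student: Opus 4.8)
The plan is to re-run the argument of Section \ref{S:dih-index}, replacing the dihedral $\rho_0$ there by a non-dihedral one and substituting Remark \ref{R:irred} for Proposition \ref{P:red} at the single place where dihedrality entered. First I would record the equivariant picture on the double branched cover $Y\,\#\,\RP^3 \to S^3$. Let $\rho_0: \pi_1 K \to SO(3)$ be a non-degenerate irreducible trace-free representation which is not dihedral. By Proposition \ref{P:eq-knots}, its image in $\MCR_0(K,SO(3))$ pulls back to an \emph{irreducible} representation $\beta: \pi_1 Y \to SO(3)$, and by Remark \ref{R:conj} each of the two circles comprising the fiber $C(\rho_0)$ is identified, via pull back to $\pi_1(Y\,\#\,\RP^3) = \pi_1 Y * \Z/2$, with a circle of conjugacy classes of equivariant representations of the form $\beta*\gamma$, where $\gamma$ sends the generator of $\Z/2$ to $\Ad k$ and $\tau^*(\beta*\gamma) = u\,(\beta*\gamma)\,u^{-1}$ with $u = \Ad i$; the special generator $\alpha$ pulls back to $\theta*\gamma$.

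By the preceding lemma, $\chi_k$ interchanges the two circles of $C(\rho_0)$ with degree zero mod $4$, so they share a common Morse--Bott index $\mu$ and it suffices to compute one of them. The reduction of Section \ref{S:dih-index} that produced formula \eqref{E:red} used only this pull-back description together with the value $\gr(\alpha) = \sign k \pmod 4$ of Theorem \ref{T:gr}; it nowhere used dihedrality of $\rho_0$, so it applies verbatim here and gives
\[
\mu\; =\; \ind \D^{\tau}_A\,(\beta*\gamma,\theta*\gamma)\, +\, \sign k \pmod 4
\]
for any equivariant connection $A$ on $\R\times(Y\,\#\,\RP^3)$ limiting to $\beta*\gamma$ and $\theta*\gamma$ over the two ends. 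To strip off the $\RP^3$ summand, I would invoke Remark \ref{R:irred}: Proposition \ref{P:red} holds for the irreducible $\beta$, so
\[
\ind \D^{\tau}_A\,(\beta*\gamma,\theta*\gamma)\; =\; \ind \D^{\tau}_B\,(\beta,\theta)\pmod 4
\]
for any equivariant connection $B$ on $\R\times Y$ limiting to $\beta$ and $\theta$. Combining the two displays gives the asserted formula.

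The only step requiring genuine work is Remark \ref{R:irred}, that is, checking that the proof of Proposition \ref{P:red} survives the passage to an irreducible $\beta$. The necessary changes are that $H^0(Y;\ad\beta) = 0$ rather than $\R$ and that $\beta*\gamma$ is now automatically irreducible, so $H^0(Y\,\#\,\RP^3;\ad(\beta*\gamma)) = 0$. One then re-evaluates $h_{\beta*\gamma} - h_{\beta}$ and the equivariant trace difference $h^{\tau}_{\beta*\gamma} - h^{\tau}_{\beta}$ from the Mayer--Vietoris sequence of the splitting $Y\,\#\,\RP^3 = Y_0 \cup \RP^3_0$ with coefficients in $\ad(\beta*\gamma)$, tracking the action of $\tilde\tau^*$ on $H^1(Y\,\#\,\RP^3;\ad(\beta*\gamma))$ through the connecting homomorphism out of $H^0(S^2;\ad\theta) = \so(3)$; the outcome is $h_{\beta*\gamma} - h_{\beta} = 2$ and $h^{\tau}_{\beta*\gamma} - h^{\tau}_{\beta} = 0$, which together with the representation-independent $\theta*\gamma$ terms cancel exactly as in the dihedral computation, leaving the identity of Proposition \ref{P:red} unchanged. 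Everything else in the proof is a transcription of Section \ref{S:dih-index}.
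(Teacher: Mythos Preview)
Your proposal is correct and follows exactly the route the paper indicates: the paper's own justification is just the sentence preceding the proposition, ``The calculation of the previous section leading up to the formula of Corollary \ref{C:final} can be easily amended to work in the current situation,'' together with Remark \ref{R:irred}. You have simply spelled out those amendments, and your Mayer--Vietoris bookkeeping for the irreducible $\beta$ (yielding $h_{\beta*\gamma}-h_\beta = 2$ and $h^{\tau}_{\beta*\gamma}-h^{\tau}_\beta = 0$) is correct and matches what the paper leaves implicit.
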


The index $\ind \D^{\tau}_B (\beta,\theta)$ in this proposition can be computed using the formula \eqref{E:d+d}. Since $h^{\tau}_{\beta}$ now vanishes, the formula \eqref{E:dtau} takes the form 
\smallskip
\begin{equation}\label{E:dtau2}
\ind\, (\tau,\D_B)(\beta,\theta)\, =\, \frac 1 2\,-\,\frac 1 2 \cdot \rho^{\tau}_{\beta} (Y).
\end{equation}

\smallskip

\begin{remark}
Let $\Delta (t)$ be the Alexander polynomial of a knot $k \subset S^3$ normalized so that $\Delta(t) = \Delta(t^{-1})$ and $\Delta(1) = 1$. The knots $k$ with $\Delta(-1) = 1$ are precisely the knots whose double branched covers $Y$ are integral homology spheres, and which are known to have no dihedral representations in $\mathcal R_0 (K,SO(3))$; see \cite[Theorem 10]{Klassen} or \cite[Proposition 3.4]{CS}. Therefore, all the generators in $\IC(k)$ are of the non-dihedral type studied in this section. In addition, $\sign k = 0\pmod 8$ because $1 = \Delta (-1) = \det (i\cdot Q)$, where $Q$ is the (even) quadratic form of the knot.
\end{remark}


\section{Knot homology: explicit calculations}
The equivariant techniques work particularly well for Montesinos knots, including two-bridge and pretzel knots, as we will demonstrate in this section. We begin with two-bridge knots, then discuss the Montesinos knots whose double branched covers are integral homology spheres, and then move on to the general Montesinos knots. We finish with a short section on torus knots. 


\subsection{Two-bridge knots}\label{S:bridge}
Let $p$ be an odd positive integer and $k$ a two-bridge knot of type $-p/q$ in the 3-sphere. Its double branched cover $Y$ is the lens space $L(p,q)$ oriented as the $(-p/q)$--surgery on an unknot in $S^3$. One can use Proposition \ref{P:kawauchi} to show that all representations $\beta: \pi_1 Y \to SO(3)$ are equivariant. The invariant $\rho^{\tau}_{\beta} (Y)$ of formula \eqref{E:dtau} has been shown to vanish in \cite[Proposition 27]{S:jdg}. Therefore, $\ind (\tau, \D_B) (\beta,\theta) = 1$ and formula \eqref{E:d+d} reduces to
\[
\ind \D^{\tau}_B (\beta,\theta)\; =\; \frac 1 2\; (\ind \D_B (\beta,\theta) + 1)\pmod 4.
\]

\smallskip\noindent
Let $\beta: \pi_1 Y \to SO(3)$ be a representation sending the canonical generator of $\pi_1 Y$ to the adjoint of $\exp\,(2\pi i\ell/p)$. The quantity $\ind \D_B (\beta,\theta) + 1$ mod 8 was shown by Sasahira \cite[Corollary 4.3]{sasahira} (see also Austin \cite{austin}) to equal
\[
2N_1 (k_1,k_2)\, +\, N_2 (k_1,k_2) \pmod 8,
\]
where the integers $0 < k_1 < p$ and $0 < k_2 < p$ are uniquely determined by the equations $k_1 = \ell \pmod p$, $k_2 = -r\ell\pmod p$ and $qr = 1\pmod p$, and 
\[
N_1(k_1,k_2) = \#\,\{\,(i,j) \in \Z^2\;|\;i+qj = 0\;(\text{mod}\; p),\; |i| < k_1,\; |j| < k_2\},
\]
\begin{multline}\notag
\;\;\; N_2(k_1,k_2) = \#\,\{\,(i,j) \in \Z^2\;|\;i+qj = 0\;(\text{mod}\; p),\; \\ |i| = k_1,\; |j| < k_2,\;\text{or}\;
|i| < k_1,\; |j| = k_2\}.\;\;
\end{multline}

\begin{example} The figure-eight knot $k$ is the two-bridge knot of type $-5/3$. Its double branched cover is the lens space $L(5,3)$ whose fundamental group has no irreducible representations and has two non-trivial reducible representations, up to conjugacy. For these two representations, $\ell$ equals 1 and 2 and, by Sasahira's formula, $\ind \D_B (\beta,\theta) + 1$ equals 2 and 4 mod 8. Since $\sign k = 0$, the corresponding Morse--Bott circles have indices $\mu = 1$ and 2 mod 4 by Corollary\eqref{C:final}. After perturbation, they contribute the generators of Floer indices 1, 2 and 2, 3 mod 4, respectively. The ranks of the chain groups $\IC(k)$ are then equal to $(1,0,0,0) + (0,1,1,0) + (0,0,1,1) = (1,1,2,1)$. This equals the Khovanov homology of (the mirror image of) $k$ hence we conclude from the Kronheimer--Mrowka spectral sequence that the ranks of $\I(k)$ also equal $(1,1,2,1)$.
\end{example}


\subsection{Special Montesinos knots}\label{S:mont1}
Let $p$, $q$, and $r$ be pairwise relatively prime positive integers, and view the Brieskorn homology sphere $\Sigma(p,q,r)$ as the link of singularity at zero of the complex polynomial $x^p + y^q + z^r$. The involution induced by the complex conjugation on the link makes $\Sigma(p,q,r)$ into a double branched cover of $S^3$ with branch set a Montesinos knot which will be called $k(p,q,r)$, see for instance \cite[Section 7]{S:jdg}. 

Since $\Sigma(p,q,r)$ is an integral homology sphere, all representations $\beta: \pi_1(\Sigma (p,q,r))\to SO(3)$ apart from the trivial one are irreducible. Fintushel and Stern \cite{FS} showed that all irreducible representations $\beta$ are non-degenerate and, up to conjugation, there are $-2 \,\lambda (\Sigma (p,q,r))$ of them, where $\lambda (\Sigma(p,q,r))$ is the Casson invariant of $\Sigma(p,q,r)$. The representations $\beta$ are also equivariant, see \cite[Proposition 8]{S:jdg}, hence each conjugacy class of them contributes four generators to the chain complex $\IC (k(p,q,r))$, two of grading $\mu(\beta)$ and two of grading $\mu(\beta) + 1$. 

\begin{theorem}\label{T:br}
The ranks of the chain groups $\IC(k(p,q,r))$ are given by $(1 + b,\; b,\; b,\; b)$, where $b\, =\, -2\,\lambda(\Sigma(p,q,r))$.
\end{theorem}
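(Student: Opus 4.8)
The plan is to enumerate the generators of $\IC(k(p,q,r))$ from the equivariant description of Section~\ref{S:gen}, to fix their $\Z/4$ gradings using Theorem~\ref{T:gr} together with the index formulas of Sections~\ref{S:equiv} and~\ref{S:other}, and then to show that, apart from the special generator, the generators are equidistributed among the four gradings.

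First I would record the structural input. Since $Y=\Sigma(p,q,r)$ is an integral homology sphere, $\det k(p,q,r)=|H_1(Y;\Z)|=1$, so the knot $k=k(p,q,r)$ carries no trace-free dihedral representation and $\sign k\equiv 0\pmod 8$; in particular Theorem~\ref{T:gr} places the special generator $\alpha$ in grading $0$. By Propositions~\ref{P:action} and~\ref{P:eq-knots} the remaining critical points of the Chern--Simons functional correspond to the irreducible trace-free representations $\rho_0\colon\pi_1 K\to SO(3)$, hence to the irreducible representations $\beta\colon\pi_1 Y\to SO(3)$, equivalently (since $Y$ is a homology sphere) to the irreducible flat $SU(2)$ connections on $Y$. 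By Fintushel--Stern~\cite{FS} there are exactly $b=-2\lambda(\Sigma(p,q,r))$ of these, all non-degenerate, and by \cite[Proposition~8]{S:jdg} each is $\tau$-equivariant; the associated $\rho_0$ is irreducible, hence non-dihedral, so Proposition~\ref{P:action}(c) and Proposition~\ref{P:final} apply. Thus each $\beta$ contributes two Morse--Bott circles of common Floer grading $\mu(\beta)\equiv\ind\D^{\tau}_B(\beta,\theta)\pmod 4$ (using $\sign k\equiv 0$), and, perturbing each circle as in~\cite{HHK}, four generators: two in grading $\mu(\beta)$ and two in grading $\mu(\beta)+1$. Writing $n_j=\#\{\beta:\mu(\beta)\equiv j\pmod 4\}$, the chain groups $\IC(k)$ then have ranks
\[
\bigl(1+2n_0+2n_3,\ \ 2n_0+2n_1,\ \ 2n_1+2n_2,\ \ 2n_2+2n_3\bigr),
\]
with $n_0+n_1+n_2+n_3=b$, and this equals $(1+b,b,b,b)$ if and only if $n_0=n_2$ and $n_1=n_3$, equivalently $\sum_{\beta}i^{\mu(\beta)}=0$.

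The heart of the matter, and what I expect to be the main obstacle, is establishing $n_0=n_2$ and $n_1=n_3$. I would prove it by computing $\ind\D^{\tau}_B(\beta,\theta)\pmod 4$ outright. By \eqref{E:d+d} this splits as $\tfrac12\ind\D_B(\beta,\theta)+\tfrac12\ind(\tau,\D_B)(\beta,\theta)$, where $\ind\D_B(\beta,\theta)$ is the ordinary relative $SU(2)$ index, computed by Fintushel--Stern~\cite{FS} in terms of the rotation numbers of $\beta$, and in particular even, since every irreducible flat $SU(2)$ connection on a Seifert homology sphere has odd Floer index; and $\ind(\tau,\D_B)(\beta,\theta)=\tfrac12-\tfrac12\rho^{\tau}_{\beta}(Y)$ by \eqref{E:dtau2}. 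The equivariant $\rho$-invariants $\rho^{\tau}_{\beta}(Y)$ of a Seifert fibered $Y$ with the branched-cover involution $\tau$ are exactly what the $G$-index calculations of Collin--Saveliev~\cite{CS} and Saveliev~\cite{S:jdg} evaluate; assembling their closed formula for $\mu(\beta)$ and summing over the $b$ representations (whose rotation-number parameters occur in symmetric families) should yield $\sum_{\beta}i^{\mu(\beta)}=0$, hence $n_0=n_2$ and $n_1=n_3$.

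Substituting $n_0=n_2$ and $n_1=n_3$ into the displayed rank vector gives $(1+b,b,b,b)$, which is the assertion. Since no claim is made about the boundary operator in $\IC(k)$, no transversality or equivariant-perturbation issues enter the argument.
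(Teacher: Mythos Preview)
Your reduction is correct up to and including the displayed rank vector and the equivalence with $n_0=n_2$, $n_1=n_3$. The gap is in how you propose to establish this last equality.

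First, you defer the computation of $\rho^{\tau}_{\beta}(Y)$ to \cite{CS,S:jdg} without extracting a value. The paper does this explicitly: using the Fintushel--Stern flat cobordism $W_0$ from $\Sigma(p,q,r)$ to a union of lens spaces, the equivariant APS formula reduces $\rho^{\tau}_{\beta}(Y)$ to $\sign_{\beta}(\tau,W_0)-\sign_{\theta}(\tau,W_0)$ plus lens-space contributions, the latter vanishing by \cite[Proposition~27]{S:jdg}. Since $H^2(W_0;\ad\beta)=0$ by \cite[Proposition~2.5, Lemma~2.6]{FS}, one gets $\rho^{\tau}_{\beta}=-1$, hence $\ind(\tau,\D_B)(\beta,\theta)=1$ and $\mu(\beta)\equiv\tfrac12(\ind\D_B(\beta,\theta)+1)\pmod 4$. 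You need this exact value, not just a formula in principle; without it the halving in \eqref{E:d+d} does not even produce an integer.

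Second, and more seriously, your proposed mechanism for $\sum_{\beta} i^{\mu(\beta)}=0$ --- ``rotation-number parameters occur in symmetric families'' --- is not an argument. There is no evident combinatorial involution on the rotation numbers of $\Sigma(p,q,r)$ that shifts the Fintushel--Stern index by $4\pmod 8$. The paper instead invokes Fr{\o}yshov's $4$-periodicity \cite[Theorem~2]{froyshov}: since the instanton Floer chain complex of $\Sigma(p,q,r)$ equals its homology, periodicity gives a free involution on the set of $\beta$'s shifting $\ind\D_B(\beta,\theta)$ by $4\pmod 8$, hence shifting $\mu(\beta)$ by $2\pmod 4$. Each involutive pair therefore contributes $(2,2,2,2)$, which is exactly $n_0=n_2$, $n_1=n_3$. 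This step is the crux of the theorem, and your outline does not supply it.
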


\begin{proof}
Our proof will use the flat cobordism of Fintushel and Stern \cite{FS}, which is constructed as follows. The mapping torus of the Seifert fibration $\Sigma(p,q,r) \to S^2$ is an orbifold with three singular points whose neighborhoods are open cones over lens spaces. The compact manifold obtained from $W$ by excising these cones is an equivariant flat cobordism $W_0$ between $\Sigma(p,q,r)$ and the lens spaces. One can easily see that the intersection form on $H^2 (W_0; \mathbb R) = \mathbb R$ is negative definite. 

An equivariant version of \cite[Theorem 2.4]{APS:II} together with the vanishing of the $\rho^{\tau}$--invariants of lens spaces \cite[Proposition 27]{S:jdg} imply that
\smallskip
\[
\rho^{\tau}_{\beta}\, (\Sigma(p,q,r))\;=\;\sign_{\beta}\, (\tau,W_0) - \sign_{\theta}\, (\tau,W_0),
\]
where 
\[
\sign_{\beta}\,(\tau,W_0)\;=\;\tr (\tilde\tau^*\,|\,H^2_+ (W_0;\ad\beta))\, -\, \tr (\tilde\tau^*\,|\,H^2_- (W_0;\ad\beta)),
\]

\smallskip\noindent
and similarly for $\sign_{\theta}\,(\tau,W_0)$. It follows from \cite[Proposition 2.5 and Lemma 2.6]{FS} that $H^2 (W_0; \ad\beta) = 0$, therefore, $\rho^{\tau}_{\beta}\, (\Sigma(p,q,r)) = \tr\,(\Ad u) = -1$ and $\ind\, (\tau,\D_B)(\beta,\theta) = 1$ by formula \eqref{E:dtau2}. Proposition \ref{P:final} and formula \eqref{E:d+d} now imply that
\[
\mu(\beta)\; =\; \frac 1 2\,\left(\,\ind \D_B (\beta,\theta) + 1\right).
\]

\smallskip\noindent
The index $\ind \D_B (\beta,\theta)$ in this formula can be computed explicitly using either \cite{FS} or Corollary \ref{C:t-red}, however, this alone will not lead us to the closed form formula of Theorem \ref{T:br}.

Instead, we will use the 4-periodicity in the instanton Floer homology due to Fr{\o}yshov \cite[Theorem 2]{froyshov}. In the case at hand, the Floer homology of $\Sigma(p,q,r)$ equals its Floer chain complex whose generators are the conjugacy classes of irreducible representations $\beta$, hence the 4-periodicity simply means that there is a (non-canonical) free involution of degree 4 on these generators. For any pair of generators $\beta_1$ and $\beta_2$,
\smallskip
\[
\mu(\beta_2) - \mu(\beta_1)\;=\;\frac 1 2\,\left(\,\ind \D_B (\beta_2,\theta) - \ind \D_B (\beta_1,\theta)\right) \pmod 4,
\]

\smallskip\noindent
which is exactly half the relative grading of the generators $\beta_1$ and $\beta_2$ in the Floer chain complex of $\Sigma(p,q,r)$. For any involutive pair $(\beta_1,\beta_2)$, we have $\mu(\beta_2) - \mu(\beta_1) = 2\pmod 4$, therefore, each such pair contributes $(2,2,2,2)$ to the chain complex $\IC(k(p,q,r))$. The special generator $\alpha$ resides in degree zero so the result follows.
\end{proof}

\begin{example}
$\Sigma(2,3,7)$ is a double branched cover of $S^3$ whose branch set $k(2,3,7)$ is the pretzel knot $P(-2,3,7)$. Since $\lambda(\Sigma(2,3,7)) = -1$, we conclude that the ranks of the chain groups $\IC(P(-2,3,7))$ are $(3,2,2,2)$. This is consistent with the calculation in \cite[Section 5]{FKP}. \end{example}

We expect that the formula of Theorem \ref{T:br} can be proved for all Seifert fibered homology spheres $\Sigma (a_1, \ldots, a_n)$ and the corresponding Montesinos knots $k(a_1,\ldots,a_n)$ using $\tau$--equivariant perturbations of \cite{S:top3} modeled after the perturbations of Kirk and Klassen \cite{KK}. Note that the action of $H^1(K;\Z/2)$ on the conjugacy classes of projective representations is free hence it causes no equivariant transversality issues.


\subsection{General Montesinos knots}\label{S:mont2}
Let $(a_1,b_1),\ldots, (a_n,b_n)$ be pairs of integers such that, for each $i$, the integers $a_i$ and $b_i$ are relatively prime and $a_i$ is positive. Burde and Zieschang \cite[Chapter 7]{BZ} associated with these pairs a Montesinos link $K((a_1,b_1),\ldots, (a_n,b_n))$ and showed that its double branched cover is a Seifert fibered manifold $Y$ with unnormalized Seifert invariants $(a_1,b_1),\ldots,(a_n,b_n)$. In particular,
\[
\pi_1 Y = \langle\, x_1,\ldots x_n,h\;|\;h\;\text{central},\; x_i^{a_i} = h^{-b_i},\; x_1\cdots x_n = 1\,\rangle,
\]
with the covering translation $\tau: Y \to Y$ acting on the fundamental group by the rule
\[
\tau_*(h) = h^{-1},\;\; \tau_*(x_i) = x_1\cdots x_{i-1} x_i^{-1} x_{i-1}^{-1} \cdots x_1^{-1},\;i=1,\ldots, n,
\]
see Burde--Zieschang \cite[Proposition 12.30]{BZ}. The knots $k(a_1,\ldots,a_n)$ of the previous section are of the type $K((a_1,b_1),\ldots, (a_n,b_n))$; we omitted the parameters $(b_1,\ldots, b_n)$ from the notation because they can be uniquely recovered from the pairwise relatively prime $a_1, \ldots, a_n$ up to isotopy of the knot. All two-bridge and pretzel knots and links are special cases of Montesinos knots and links. In this section, we will only be interested in Montesinos knots; the case of Montesinos links of two components will be addressed in Section \ref{S:mont}.

Let $k$ be a Montesinos knot $K((a_1,b_1),\ldots, (a_n,b_n))$ and $Y$ the double branch cover of $S^3$ with branch set $k$. The manifold $Y$ need not be an integral homology sphere; in fact, one can easily see that its first homology is a finite abelian group of order 
\smallskip
\[
|H_1 (Y;\Z)|\; =\; \left(\sum_{i=1}^n\; b_i/a_i\right)\cdot a_1\cdots a_n.
\]

\smallskip\noindent
Note that this integer is always odd because $Y$ is a $\Z/2$ homology sphere. 

All reducible representations $\beta: \pi_1 Y \to SO(3)$ are equivariant because the involution $\tau_*: H_1 (Y) \to H_1 (Y)$ acts as  multiplication by $-1$, see Proposition \ref{P:kawauchi}. There are no irreducible representations for $n \le 2$. If $n = 3$, all irreducible representations are non-degenerate and equivariant, which can be shown using a minor modification of the arguments of \cite[Proposition 2.5]{FS} and \cite[Proposition 30]{S:jdg}. For $n \ge 4$, one encounters positive dimensional manifolds of representations; the action of $\tau^*$ on these manifolds was described in \cite{S:top3}, together with  equivariant perturbations making them non-degenerate. This discussion, together with Propositions \ref{P:action} and \ref{P:eq-knots}, identifies the generators of the chain complex $\IC (k)$ for all Montesinos knots in terms of representations for Seifert fibered manifolds, which are well known. An independent calculation of the generators of $\IC (k)$ for pretzel knots $k$ with $n = 3$ can be found in Zentner \cite{Z}. 

Let $W_0$ be the mapping cylinder of the Seifert fibration $Y \to S^2$ with excised open cones around its singular points. Then $W_0$ is a cobordism from a disjoint union of the lens spaces $L(a_i,-b_i)$ to $Y$.

\begin{lemma}\label{L:flat}
The cobordism $W_0$ is a flat cobordism provided $a_1\cdots a_n = \lcm(a_1,\ldots,a_n)\cdot |H_1 (Y;\Z)|$.
\end{lemma}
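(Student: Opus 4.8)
The statement claims that the cobordism $W_0$, obtained by removing open cones around the singular fibers of the Seifert fibration $Y \to S^2$, carries a flat $SO(3)$ connection extending the given ones, precisely under the numerical condition $a_1\cdots a_n = \lcm(a_1,\ldots,a_n)\cdot |H_1(Y;\Z)|$. The plan is to reduce the problem to the level of fundamental groups: a flat connection on $W_0$ is just a representation $\pi_1 W_0 \to SO(3)$, and "being a flat cobordism" means this representation restricts to prescribed representations on the two ends. So first I would compute $\pi_1 W_0$ explicitly. The mapping cylinder of the Seifert fibration deformation retracts onto the base orbifold $S^2(a_1,\ldots,a_n)$, and after removing the cones over the singular fibers one obtains a manifold whose fundamental group is the orbifold fundamental group of a sphere with $n$ disks removed — that is, a free product structure or, more precisely, $\pi_1 W_0$ is generated by $x_1,\ldots,x_n$ with the single relation $x_1\cdots x_n = 1$ but \emph{without} the torsion relations $x_i^{a_i} = h^{-b_i}$ (those come from filling back in the singular fibers). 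The circle fiber class $h$ becomes central and, in the punctured picture, the relation $h$ = product of the $x_i^{a_i}$ (with appropriate exponents) relating $h$ to the boundary.

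The key step is then to understand what representations $\rho\colon \pi_1 W_0 \to SO(3)$ exist that restrict correctly. On the $Y$-end we want the reducible representation $\beta$ (or an arbitrary one from the relevant family), and on the lens-space ends $L(a_i,-b_i)$ we want the restrictions of $\beta$ to the boundary solid-torus-complement pieces. Since we only need reducible $\beta$ here (these are the relevant generators in $\IC(k)$ via Proposition~\ref{P:eq-knots} — the dihedral ones pull back to reducibles), $\beta$ factors through $H_1(Y)$, and the question becomes abelian: does the character of $\beta$, viewed as a homomorphism $H_1(Y) \to SO(2)$, extend over $H_1(W_0)$? I would compute $H_1(W_0)$ from the presentation above — abelianizing $\langle x_1,\ldots,x_n \mid x_1\cdots x_n = 1\rangle$ together with the $h$-relation gives a free abelian group of rank one (generated by $h$, say) and the $x_i$ expressed in terms of $h$ via rational coefficients $b_i/a_i$. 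The obstruction to extending $\beta$ across $W_0$ is then a divisibility/torsion condition comparing the orders of holonomies around the singular fibers in $Y$ versus what is forced by $H_1(W_0)$, and a direct Mayer–Vietoris or Smith-theory count of $|H_1(Y)|$ against $a_1\cdots a_n$ and $\lcm(a_1,\ldots,a_n)$ yields exactly the stated equality as the precise condition for no obstruction.

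Concretely, I expect the mechanism to be: the lens space $L(a_i,-b_i)$ appearing as the $i$-th boundary component of $W_0$ has $\pi_1 = \Z/a_i$, and the restriction of any $\rho$ on $W_0$ to this boundary is determined by where the generator $x_i$ goes; the constraint from $Y$ is that $x_i$ has order dividing $a_i$ but the \emph{actual} order is governed by $|H_1(Y)|$ and the Seifert data. Gluing the cone back in requires the holonomy around $x_i$ to become trivial of the right order — the compatibility across all $n$ ends plus the central $h$ forces $\lcm(a_i) \cdot |H_1(Y)| = \prod a_i$. I would verify this by writing down the map $H_1(\partial W_0) \to H_1(W_0)$ explicitly and checking when a character on the $Y$-boundary lifts, using that $|H_1(Y;\Z)| = (\sum b_i/a_i)\, a_1\cdots a_n$ as stated just above in the text.

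\textbf{Main obstacle.} The real work is the bookkeeping with the Seifert invariants: getting the presentation of $\pi_1 W_0$ exactly right (in particular the precise form of the $h$-relation and the exponents, which differ from the presentation of $\pi_1 Y$ only by dropping the cone relations), and then translating "the reducible representation $\beta$ extends" into the clean arithmetic identity. The subtlety is that $\beta$ is only reducible, not trivial, so one must track its nontrivial holonomy around each $x_i$ and verify the extension condition is governed by $\lcm$ of the $a_i$ rather than by the individual $a_i$ — this is where the hypothesis $a_1\cdots a_n = \lcm(a_1,\ldots,a_n)\cdot |H_1(Y;\Z)|$ enters as both necessary and sufficient. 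I would also need to confirm that "flat cobordism" in the sense used later (Theorem~\ref{T:br} and its analogues) requires extension of the relevant \emph{family} of representations, not just a single one, but since all the relevant $\beta$ here are reducible and the obstruction is homological, the same numerical condition handles them uniformly.
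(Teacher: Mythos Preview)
Your computation of $\pi_1 W_0$ is wrong, and this derails the rest of the argument. You claim that after removing the cones the torsion relations $x_i^{a_i}=h^{-b_i}$ disappear and $\pi_1 W_0$ is free on $x_1,\dots,x_n$ modulo $x_1\cdots x_n=1$. In fact the opposite happens: the inclusion $Y\hookrightarrow W_0$ is $\pi_1$--surjective, and its kernel is normally generated by the regular fiber $h$ (which bounds a disk in $W_0$ because the mapping cylinder collapses each regular fiber to a point, and that disk avoids the excised cones). Hence
\[
\pi_1 W_0 \;=\; \pi_1 Y / \langle h\rangle \;=\; \langle x_1,\dots,x_n \mid x_i^{a_i}=1,\ x_1\cdots x_n=1\rangle,
\]
the orbifold fundamental group of the base $S^2(a_1,\dots,a_n)$. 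A quick sanity check: for $\Sigma(2,3,5)$ this is the finite triangle group $A_5$, so $b_1(W_0)=0$, whereas your free group would give $b_1(W_0)=n-1\ge 2$.

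Once $\pi_1 W_0=\pi_1 Y/\langle h\rangle$ is in place, the paper's argument is a single line, and your obstruction-theoretic scaffolding is unnecessary. A representation $\beta:\pi_1 Y\to SO(3)$ descends to $\pi_1 W_0$ if and only if $\beta(h)=1$. Since $h$ is central and the center of $SO(3)$ is trivial, this is automatic whenever $\beta$ is irreducible --- so the lemma is not really about reducibles, contrary to your framing; the irreducible case is free. For reducible $\beta$ the condition $\beta(h)=1$ for \emph{all} such $\beta$ is exactly $h=0$ in $H_1(Y;\Z)$, and Lee--Raymond identify that homological condition with the stated equality $a_1\cdots a_n=\lcm(a_1,\dots,a_n)\cdot|H_1(Y;\Z)|$. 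There is no separate extension problem at the lens-space ends: once $\beta$ factors through $\pi_1 W_0$, its restrictions to the $L(a_i,-b_i)$ are determined and automatically compatible.
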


\begin{proof}
The fundamental group $\pi_1 W_0$ is obtained from $\pi_1 Y$ by setting the homotopy class $h \in \pi_1 Y$ of the circle fiber equal to one. Since $h$ is a central element in $\pi_1 Y$, every irreducible representation $\beta: \pi_1 Y \to SO(3)$ has the property that $\beta(h) = 1$. This property need not hold for reducible representations but it does if $h = 1$ in the first homology group $H_1 (Y)$. The algebraic condition of the lemma ensures exactly that, see Lee--Raymond \cite[page 331]{LR}.
\end{proof}

To avoid dealing with perturbations, we will assume from now on that our knot $k$ is a Montesinos knot of type $K((a_1,b_1)(a_2,b_2), (a_3,b_3))$ and that $W_0$ is a flat cobordism. We wish to calculate Floer gradings of the generators in the chain complex $\IC(k)$. Recall that every conjugacy class of non-trivial reducible representation $\beta: \pi_1 Y \to SO(3)$ gives rise to two generators of gradings $\mu(\beta)$ and $\mu(\beta) + 1$, and every conjugacy class of irreducible representation to four generators, two of grading $\mu(\beta)$ and two of grading $\mu(\beta) + 1$. The trivial representation as usual gives rise to just one generator $\alpha$ of grading $\sign k$.

\begin{lemma}
For any non-trivial representation $\beta: \pi_1 Y \to SO(3)$, we have 
\[
\mu(\beta) \;=\;\sign k\; +\; \frac 1 2\,\left(\, \ind \D_B (\beta,\theta) + 1\right)\pmod 4.
\]
\end{lemma}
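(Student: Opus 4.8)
The strategy is to relate the equivariant index $\ind\mathcal D^\tau_A(\beta*\gamma,\theta*\gamma)$ on the connected sum $Y\,\#\,\RP^3$ to the equivariant index $\ind\mathcal D^\tau_B(\beta,\theta)$ on $Y$ alone, exactly as in Proposition~\ref{P:red} and Remark~\ref{R:irred}, and then feed the result into formula \eqref{E:red}. Concretely, given a non-trivial $\beta:\pi_1 Y\to SO(3)$ (reducible or irreducible), Proposition~\ref{P:eq-knots} together with Remark~\ref{R:conj} shows that the corresponding critical point of the Chern--Simons functional on $(S^3,\k)$ pulls back to the equivariant representation $\beta*\gamma$ on $\pi_1(Y\,\#\,\RP^3)=\pi_1 Y*\Z/2$, with conjugating element $u=\Ad i$. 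The Floer grading $\mu(\beta)$ of the associated Morse--Bott circle (or pair of circles) is then $\mu(\beta)=\ind\mathcal D^\tau_A(\beta*\gamma,\theta*\gamma)+\gr(\alpha)\pmod 4$, and by Theorem~\ref{T:gr} we have $\gr(\alpha)=\sign k\pmod 4$.

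First I would invoke Proposition~\ref{P:red} in the reducible case and Remark~\ref{R:irred} in the irreducible case to obtain
\[
\ind\mathcal D^\tau_A(\beta*\gamma,\theta*\gamma)\;=\;\ind\mathcal D^\tau_B(\beta,\theta)\pmod 4
\]
for any equivariant connection $B$ on $\R\times Y$ limiting to $\beta$ and $\theta$. This is the step that does the real work; it was already carried out in the proof of Proposition~\ref{P:red} via Proposition~\ref{P:eq-ind}, the additivity of $\rho$-- and $\rho^\tau$--invariants under the 1--handle cobordism from $Y\cup\RP^3$ to $Y\,\#\,\RP^3$, and a Mayer--Vietoris computation of the twisted cohomology groups $H^0$ and $H^1$; nothing new is needed here beyond observing, as Remark~\ref{R:irred} asserts, that the reducibility of $\beta$ was not essential to that argument. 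Combining this identification with the displayed formula for $\mu(\beta)$ and with $\gr(\alpha)=\sign k$ gives
\[
\mu(\beta)\;=\;\sign k\;+\;\ind\mathcal D^\tau_B(\beta,\theta)\pmod 4.
\]

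It remains to rewrite $\ind\mathcal D^\tau_B(\beta,\theta)$ in the form $\tfrac12(\ind\mathcal D_B(\beta,\theta)+1)$. By the splitting \eqref{E:d+d} we have $\ind\mathcal D^\tau_B(\beta,\theta)=\tfrac12\ind\mathcal D_B(\beta,\theta)+\tfrac12\ind(\tau,\mathcal D_B)(\beta,\theta)$, so the claim reduces to showing $\ind(\tau,\mathcal D_B)(\beta,\theta)=1$. For irreducible $\beta$ this is formula \eqref{E:dtau2} together with the vanishing of $\rho^\tau_\beta(Y)$, which follows from Lemma~\ref{L:flat}: the flat cobordism $W_0$ from the lens spaces $L(a_i,-b_i)$ to $Y$, the vanishing of the $\rho^\tau$--invariants of lens spaces (\cite[Proposition~27]{S:jdg}), and the equivariant version of \cite[Theorem~2.4]{APS:II} force $\rho^\tau_\beta(Y)$ to be a difference of equivariant signature terms on $W_0$, which vanish because $H^2(W_0;\ad\beta)=0$ for $n=3$ by the arguments adapted from \cite[Proposition~2.5, Lemma~2.6]{FS}. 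For non-trivial reducible $\beta$ the corresponding statement is \eqref{E:dtau}, so I must in addition check that $\rho^\tau_\beta(Y)$ vanishes for reducible $\beta$ as well; this again follows from the flat cobordism $W_0$ once one verifies that $H^2(W_0;\ad\beta)=0$ for non-trivial reducible $\beta$, using that the intersection form on $H^2(W_0;\R)=\R$ is negative definite and a standard twisted-cohomology computation for the Seifert cobordism.

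The main obstacle I anticipate is precisely this last point: confirming the vanishing of $H^2(W_0;\ad\beta)$ — equivalently of $\rho^\tau_\beta(Y)$ — uniformly for \emph{all} non-trivial $\beta$, reducible ones included, rather than only for the irreducible ones handled in Section~\ref{S:mont1}. For reducible $\beta$ the coefficient system $\ad\beta$ splits as $\R\oplus\R_-$ (with $\R_-$ the $\pm1$ local system pulled back from the abelianization), so the computation decouples into an untwisted piece, where negative-definiteness of the form on $W_0$ gives $H^2_+=0$, and a genuinely twisted piece, where one needs that the $\R_-$--twisted second cohomology of $W_0$ either vanishes or contributes symmetrically under $\tilde\tau^*$; tracking the $\tau$--action carefully through the Mayer--Vietoris sequence of $W_0$ is the delicate part. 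Everything else is a matter of assembling \eqref{E:d+d}, \eqref{E:dtau}, \eqref{E:dtau2}, \eqref{E:red}, Theorem~\ref{T:gr}, Proposition~\ref{P:red}, and Remark~\ref{R:irred}.
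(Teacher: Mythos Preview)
Your overall strategy is exactly the paper's: reduce to $\ind\D^\tau_B(\beta,\theta)$ via Proposition~\ref{P:red}/Remark~\ref{R:irred}, split via \eqref{E:d+d}, and show $\ind(\tau,\D_B)(\beta,\theta)=1$ using the flat cobordism $W_0$. However, the computation of $\rho^\tau_\beta(Y)$ goes wrong in both cases.

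For irreducible $\beta$ you assert that $\rho^\tau_\beta(Y)=0$. Plugging this into \eqref{E:dtau2} gives $\ind(\tau,\D_B)(\beta,\theta)=\tfrac12$, not $1$, which would destroy the formula. The mistake is that $\rho^\tau_\beta(Y)=\sign_\beta(\tau,W_0)-\sign_\theta(\tau,W_0)$ is a \emph{difference} of two equivariant signature terms: the first vanishes because $H^2(W_0;\ad\beta)=0$, but the second does not, since $H^2(W_0;\ad\theta)=\R^3$ and $W_0$ is negative definite. One finds $\sign_\theta(\tau,W_0)=1$, hence $\rho^\tau_\beta(Y)=-1$, and then \eqref{E:dtau2} gives the required value $1$. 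This is precisely the computation in the proof of Theorem~\ref{T:br}, where the answer is recorded as $\rho^\tau_\beta=\tr(\Ad u)=-1$.

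For non-trivial reducible $\beta$ your target $\rho^\tau_\beta(Y)=0$ is correct, but the proposed justification is not. You plan to show $H^2(W_0;\ad\beta)=0$, but in fact $\ad\beta$ splits as $\R\oplus L$ with $L$ a non-trivial flat \emph{complex} line bundle (not $\R\oplus\R_-$), and while $H^2(W_0;L)=0$ by the Fintushel--Stern argument, the trivial summand contributes $H^2(W_0;\R)=\R$. What actually happens is that $\sign_\beta(\tau,W_0)=1=\sign_\theta(\tau,W_0)$, so the two terms cancel and $\rho^\tau_\beta(Y)=0$; then \eqref{E:dtau} yields $\ind(\tau,\D_B)(\beta,\theta)=1$. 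This is the content of the paper's own proof of the lemma.
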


\begin{proof}
This formula holds for all irreducible representations $\beta$ by the same argument as in the proof of Theorem \ref{T:br}. That argument can be easily amended for non-trivial reducible representations $\beta: \pi_1 Y \to SO(3)$ by using \eqref{E:dtau} in place of \eqref{E:dtau2}. The $\rho$--invariant in formula \eqref{E:dtau} is given by the formula 
\[
\rho^{\tau}_{\beta}\, (Y)\;=\;\sign_{\beta}\, (\tau,W_0) - \sign_{\theta}\, (\tau,W_0)
\]
with $\sign_{\theta}\,(\tau,W_0) = 1$. To compute the cohomology of $W_0$ with coefficients in $\ad\beta$, write $\ad P = \mathbb R\,\oplus\,L$, where $L$ is a line bundle with a non-trivial flat connection. Then $H^2 (W_0; L) = 0$ by the argument of \cite[Lemma 2.6]{FS} and $H^2 (W_0;\mathbb R) = \R$. Since the manifold $W_0$ is negative definite, we easily conclude that $\sign_{\beta}\, (\tau,W_0) = 1$. Therefore, $\rho^{\tau}_{\beta}\, (Y) = 0$, and the result follows.
\end{proof}

To complete the calculation of Floer gradings, we need to compute the index $\ind \D_B (\beta,\theta)$. This can be done by extending the formulas of Fintushel and Stern \cite{FS} from integral homology spheres to the more general situation at hand. We will restrict ourselves to the relatively easy case of odd $a_i$ and leave the case of even $a_i$ open because it would require passing to a double branched cover as in the proof of \cite[Theorem 3.7]{FS}. 

Given a flat cobordism $W_0$, any representation $\beta: \pi_1 (Y) \to SO(3)$ gives rise to a representation $\pi_1 (W_0) \to SO(3)$ and to representations $\beta_i: \pi_1\,(L(a_i,-b_i)) \to SO(3)$. Let us assume that $a_i$ are odd and $\beta_i \neq \theta$ for $i = 1,\ldots, m$, and that $\beta_i =\theta$ for $i = m+1,\ldots,3$. Applying the excision principle for the ASD operator twice, first to $\mathbb R\,\times\,L(a_i,-b_i)$ with $i = 1,\ldots, m$, and then to $W_0$ with the attached product ends, we obtain
\smallskip
\begin{gather}
- 3\; =\; \ind \D_B\, (\theta,\theta)\;=\;\ind \D_B\, (\theta,\beta_i)\,+\,1\,+\,\ind \D_B\, (\beta_i,\theta)\quad\text{and} \notag \\
-3\; = \;\ind \D_B\, (W_0,\theta,\theta)\;=\;
\sum_{i=1}^m\; (\,\ind \D_B\, (\theta,\beta_i) + 1\,)\hspace{0.83in} \notag \\ 
\hspace{1.9in} +\;\ind \D_B (W_0)\,+\,1\,+\,\ind \D_B\, (\beta,\theta),\notag
\end{gather}
where $\D_B (W_0)$ stands for the ASD operator on $W_0$ twisted by a flat connection $B$ whose holonomy is the representation $\pi_1 (W_0) \to SO(3)$. A similar argument with even $a_i$ does not work because representations $\beta_i$ and $\theta$ may end up living in different $SO(3)$ bundles. 

\begin{lemma}
Let $\beta: \pi_1 (Y) \to SO(3)$ be a non-trivial representation then $\ind \D_B (W_0) = -1$ if $\beta$ is reducible, and $\ind \D_B (W_0) = 0$ if $\beta$ is irreducible.
\end{lemma}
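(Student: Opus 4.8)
The plan is to express $\ind\D_B(W_0)$ as minus the Euler characteristic of the ASD deformation complex on $W_0$ twisted by $\ad B$ and then to read off the twisted cohomology of $W_0$. Since $W_0$ is a flat cobordism the connection $B$ is flat, and, working in the weighted Sobolev completions of Section \ref{S:eq-ind} and arguing as for the cobordism $W$ in Section \ref{S:L2} (via Ruberman \cite[Proposition 4.1 and Corollary 4.2]{R:forms}), the operator $\D_B(W_0)$ is Fredholm with
\[
\ind\D_B(W_0)\; =\; -\dim H^0(W_0;\ad B)\, +\, \dim H^1(W_0;\ad B)\, -\, \dim H^2_+(W_0;\ad B).
\]
As a check, for the trivial connection this reads $-3 + 0 - 0 = -3$: indeed $H^0(W_0;\R^3) = \R^3$, then $H^1(W_0;\R) = 0$ because $H_1(W_0;\Z)$ is finite (a quotient of the finite group $H_1(Y;\Z)$), and $H^2_+(W_0;\R) = 0$ because $W_0$ is negative definite. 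This matches the value used in the excision identity preceding the lemma.

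First I would show the two positive-degree terms vanish for every non-trivial $\beta$. The representation $\beta$ descends to $\pi_1 W_0$: automatically when $\beta$ is irreducible, since the circle fibre $h$ is central, and by the flat-cobordism hypothesis of Lemma \ref{L:flat} (under which $h = 1$ in $H_1(Y)$) when $\beta$ is reducible. If $\beta$ is reducible, split $\ad P = \underline{\R}\,\oplus\,L$ over $W_0$ with $L$ carrying a non-trivial flat connection; if $\beta$ is irreducible, leave $\ad P$ intact. On the summand $\underline{\R}$ one has $H^1(W_0;\R) = 0$ and $H^2_+(W_0;\R) = 0$ as above, while on the twisted part ($L$ in the reducible case, all of $\ad P$ in the irreducible case) the first and second cohomology vanish by the Fintushel--Stern argument \cite[Proposition 2.5 and Lemma 2.6]{FS} --- exactly as invoked for $H^2$ in the proof of the previous lemma --- which applies verbatim to the present $W_0$, fibred over a pair of pants with the divisibility condition of Lemma \ref{L:flat} in force. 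Hence $\ind\D_B(W_0) = -\dim H^0(W_0;\ad B)$. By the proof of Lemma \ref{L:flat}, $\pi_1 W_0$ is the quotient of $\pi_1 Y$ by the normal closure of $h$, so $\pi_1 Y \to \pi_1 W_0$ is onto and $\im(B) = \im(\beta)$ in $SO(3)$; thus $H^0(W_0;\ad B)$ is the subspace of $\so(3)$ fixed by $\Ad(\im\beta)$. When $\beta$ is a non-trivial reducible representation this subspace is the line $\so(2)$ and $\ind\D_B(W_0) = -1$; when $\beta$ is irreducible it is zero and $\ind\D_B(W_0) = 0$, as claimed.

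The step I expect to be the main obstacle is the first one: justifying that the weighted $L^2$ index of $\D_B$ on $W_0$ with cylindrical ends, in the normalization used in the excision identities, coincides with the naive cohomological quantity $-h^0 + h^1 - h^2_+$ formed from the ordinary singular cohomology of $W_0$ rather than from its compactly supported or relative cohomology. This is exactly the bookkeeping carried out for $W$ in Section \ref{S:L2}; the same references to \cite{R:forms} apply once one checks that every end of $W_0$ carries a non-degenerate flat limit --- which holds for the trivial connection on $Y$ and on the lens spaces, and for $\beta$ by the non-degeneracy hypotheses in force --- and I would simply reproduce that argument here.
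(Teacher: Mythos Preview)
Your proof is correct and follows essentially the same route as the paper's: identify the index with $h^1 - h^0 - h^2$ for the ASD deformation complex, split $\ad P = \underline{\R}\oplus L$ in the reducible case, use the Fintushel--Stern arguments together with negative definiteness of $W_0$ to kill $h^1$ and $h^2$, and read off $h^0$ from the stabilizer of $\beta$. The only cosmetic difference is that the paper phrases the Fintushel--Stern input as injectivity of the restriction maps $H^i(W_0;L)\to H^i(Y;L)$ and then defers to \cite[Proposition 3.3]{FS} for the passage to $h^1 = h^2 = 0$, which is exactly the $L^2$-versus-singular bookkeeping you flag as your main obstacle.
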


\begin{proof}
The proof of \cite[Proposition 3.3]{FS} implies the formula for irreducible $\beta$ immediately, and for reducible $\beta$ after a minor modification. To be precise, let us assume that $\beta$ is reducible. The index at hand equals $h^1 - h^0 - h^2$, where $h^0$, $h^1$, and $h^2$ are the Betti numbers of the elliptic complex
\begin{equation*}
\begin{tikzpicture}
\draw (0,0) node (a) {$0$} ;
\draw (2,0) node (b) {$\Omega^{0}(W_0, \ad P)$};
\draw (6,0) node (c) {$\Omega^{1}(W_0, \ad P)$};
\draw (10,0) node (d) {$\Omega_+^{2}(W_0, \ad P).$};
\draw[->](a)--(b);
\draw[->](b)--(c) node [midway,above](TextNode){$-d_B$};
\draw[->](c)--(d) node [midway,above](TextNode){$d^{+}_B$};
\end{tikzpicture}
\end{equation*}
Since $B$ has one-dimensional stabilizer we immediately conclude that $h^0 = 1$. To compute the remaining Betti numbers, write $\ad P = \mathbb R\,\oplus\,L$, where $L$ is a line bundle with a non-trivial flat connection. The argument of \cite[Lemma 2.6]{FS} can be used to show that the homomorphisms $H^1 (W_0; L) \to H^1 (Y;L)$ and $H^2 (W_0;L) \to H^2 (Y;L)$ induced by the inclusion $Y \to W_0$ are injective. Both $H^1 (W_0; \mathbb R)$ and $H^1 (Y;\mathbb R)$ vanish,  and the long exact sequence of the pair $(W_0,Y)$ shows that the kernel of the map $H^2 (W_0;\mathbb R) \to H^2 (Y;\mathbb R)$ is one-dimensional. Keeping in mind that the manifold $W_0$ is negative definite, we conclude as in the proof of \cite[Proposition 3.3]{FS} that $h^1 = h^2 = 0$.
\end{proof}

\begin{cor}\label{C:t-red}
Let $\beta: \pi_1 (Y) \to SO(3)$ be a non-trivial representation such that $a_i$ is odd and $\beta_i \neq \theta$ for $i = 1,\ldots,m$, and $\beta_i = \theta$ for $i = m+1,\ldots, 3$. Then
\[
\mu(\beta)\;=\;\sign\,k\,-\,1\, +\,\frac 1 2\,\sum_{i=1}^m\; (\,\ind \D_B\, (\beta_i,\theta) + 3\,) \pmod 4,
\]
where the index $\ind \D_B\, (\beta_i,\theta)$ on the infinite cylinder $\mathbb R\,\times\,L(a_i,-b_i)$  
can be computed as in Section \ref{S:bridge}.
\end{cor}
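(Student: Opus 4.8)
\smallskip\noindent\textbf{Proof proposal.} This statement needs no new geometric input: the plan is to substitute the two excision identities displayed just above into the index formula $\mu(\beta) = \sign k + \tfrac12(\ind \D_B(\beta,\theta)+1)\pmod 4$ established above, and then feed in the value of $\ind \D_B(W_0)$ computed in the lemma just proved. Thus it suffices to determine $\ind \D_B(\beta,\theta)\pmod 8$.

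Solving the first excision identity gives $\ind \D_B(\theta,\beta_i)+1 = -(\ind \D_B(\beta_i,\theta)+3)$; substituting this into the second excision identity expresses $\ind \D_B(\beta,\theta)$ through $\sum_{i=1}^m(\ind \D_B(\beta_i,\theta)+3)$, the term $\ind \D_B(W_0)$, and the Betti-number correction $h_\beta=\dim(H^0\oplus H^1)(Y;\ad\beta)$ attached to the $Y$-end. By the lemma just proved, $\ind \D_B(W_0)=-1$ for reducible $\beta$ and $\ind \D_B(W_0)=0$ for irreducible $\beta$, while $h_\beta=1$ and $h_\beta=0$ respectively; hence $\ind \D_B(W_0)+h_\beta=0$ in both cases, and one is left with $\ind \D_B(\beta,\theta)=-3+\sum_{i=1}^m(\ind \D_B(\beta_i,\theta)+3)$. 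Plugging this back in yields exactly $\mu(\beta)=\sign k-1+\tfrac12\sum_{i=1}^m(\ind \D_B(\beta_i,\theta)+3)\pmod 4$.

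To see that the right-hand side is a genuine integer mod $4$, I would check that each $\ind \D_B(\beta_i,\theta)$ is odd: by Sasahira's formula recalled in Section \ref{S:bridge}, $\ind \D_B(\beta_i,\theta)+1\equiv 2N_1(k_1,k_2)+N_2(k_1,k_2)\pmod 8$, and both $N_1$ and $N_2$ are even because the lattice-point sets defining them are invariant, and freely acted on, by $(i,j)\mapsto(-i,-j)$. Computability of $\ind \D_B(\beta_i,\theta)$ on $\R\times L(a_i,-b_i)$ is then precisely what Section \ref{S:bridge} provides.

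The one point requiring care — more a matching of hypotheses than a genuine obstacle — is that the two excision identities may be glued together only along the cylinders $\R\times L(a_i,-b_i)$ with $\beta_i\neq\theta$ and $a_i$ odd, so that (using $H^2(L(a_i,-b_i);\Z/2)=0$) the connections $\beta_i$ and $\theta$ lie in the same $SO(3)$ bundle and the excision gluing is unobstructed; for even $a_i$ this can fail, which is why that case is deliberately set aside in the surrounding discussion. I would also double-check that the correction terms at each end in the two displayed excision identities are indeed the data $\dim(H^0\oplus H^1)$ of the relevant limiting flat connection, since an off-by-one there would propagate straight into the final congruence.
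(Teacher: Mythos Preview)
Your argument is correct and is exactly the derivation the paper leaves implicit: substitute the first excision identity into the second, use $\ind \D_B(W_0)+h_\beta=0$ in both the reducible and irreducible cases, and plug the resulting expression for $\ind \D_B(\beta,\theta)$ into the grading formula. Your reading of the ``$+1$'' at the $Y$-end in the second excision identity as the correction $h_\beta$ is the right one, and it is precisely this that makes the final formula uniform across the two cases.

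One small slip in your integrality check: the set defining $N_1$ contains the origin $(0,0)$, so the involution $(i,j)\mapsto(-i,-j)$ does \emph{not} act freely there, and $N_1$ is in fact odd. This is harmless for your conclusion, since $2N_1$ is even regardless of the parity of $N_1$, and your argument that $N_2$ is even (the origin is excluded by the conditions $|i|=k_1$ or $|j|=k_2$) is correct; hence $2N_1+N_2$ is even and $\ind \D_B(\beta_i,\theta)$ is odd as claimed.
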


\begin{example}\label{E:233}
Let us view the pretzel knot $k = P(-2,3,3)$ as the Montesinos knot $K ((2,-1),(3,1),(3,1))$. It obviously satisfies the condition of Lemma \ref{L:flat}. Its double branched cover is a Seifert fibered manifold $Y$ whose fundamental group has presentation
\smallskip
\[
\langle\, x_1, x_2, x_3, x_4, h \; | \; h \; \text{central},\;x_1^2 = h,\; x_2^3 = h^{-1},\; x_3^3 = h^{-1},\; x_1\, x_2\, x_3 =1\, \rangle.
\]

\smallskip\noindent
This group admits one non-trivial reducible representation $\beta$ with $\beta(x_1) = 1$, $\beta (x_2) = \Ad (\exp(2\pi i/3))$ and $\beta(x_3) = \Ad(\exp(-2\pi i/3))$ contributing generators of gradings $\mu$ and $\mu + 1$ to the chain complex $\IC(k)$. To compute $\mu$, we apply the formulas of Section \ref{S:bridge} to the lens space $L(3,-1) = L(3,2)$ twice to obtain $\ind \D_B (\beta_2,\theta) = \ind \D_B (\beta_3, \theta) = 1$ mod 8. Since $\sign k = 2 \pmod 4$, it follows from Corollary \ref{C:t-red} that $\mu = 1$ mod 4 hence the contribution of $\beta$ to the chain complex is $(0,1,1,0)$. The special generator $\alpha$ contributes $(0,0,1,0)$.

The group $\pi_1 Y$ also admits one irreducible representation $\beta$ such that all of the induced representations $\beta_1: \pi_1 (L(2,1)) \to SO(3)$ and $\beta_2, \beta_3: \pi_1 (L(3,2)) \to SO(3)$ are non-trivial. Corollary \ref{C:t-red} no longer applies hence we can only conclude that the contribution of this representation to $\IC(k)$ is $(2,2,0,0)$ up to cyclic permutation. 

This information can be combined with the fact that the Kronheimer--Mrowka spectral sequence of the knot $k = P(-2,3,3)$ is trivial and that the Khovanov homology groups of $k$ have ranks $(2,1,1,1)$; see Lobb--Zentner \cite{LZ}. It then follows that the ranks of the chain groups $\IC(k)$ must be $(2,1,2,2)$, with the contribution of the irreducible being $(2,0,0,2)$, and that the boundary operator $\IC_2 (k) \to \IC_3 (k)$ must be non-trivial.
\end{example}

We expect that a similar calculation can be done for all Montesinos knots $K((a_1,b_1),\ldots,(a_n, b_n))$ satisfying the condition of Lemma \ref{L:flat} with the help of the equivariant perturbations of \cite{S:top3}. 


\subsection{Torus knots}\label{S:torus1}
Let $p$ and $q$ be positive integers which are odd and relatively prime. The double branched cover of the right handed $(p,q)$--torus knot $T_{p,q}$ is the Brieskorn homology sphere $\Sigma (2,p,q)$. According to Fintushel--Stern \cite{FS}, all irreducible $SO(3)$ representations of the fundamental group of $\Sigma (2,p,q)$ are non-degenerate and, up to conjugacy, there are $-\sign (T_{p,q})/4$ of them. All of these representations are equivariant \cite[Section 4.2]{CS} hence each of them contributes four generators to the chain complex of $\I (T_{p,q})$, two of index $\mu$ and two of 
index $\mu + 1$. Calculating $\mu$ would require equivariant index theory on the double branched cover of $T_{p,q}$ which is currently not sufficiently well developed. We know that the special generator resides in degree zero because $\sign (T_{p,q}) = 0$ mod 8, and we conjecture that the ranks of the chain groups $\IC(T_{p,q})$ are 
\[
(1 + a,\; a,\; a,\; a),\quad\text{where}\quad a = -\sign (T_{p,q})/4.
\]
This conjecture is consistent with the calculations for torus knots by Hedden, Herald and Kirk \cite{HHK}.

Let us now assume that $p$ and $q$ are relatively prime positive integers such that $p$ is odd and $q = 2r$ is even. The double branched cover $Y$, which is no longer an integral homology sphere, is the link of singularity at zero of the complex polynomial $x^2 + y^p + z^{2r} = 0$, with the covering translation given by the formula $\tau(x,y,z) = (-x,y,z)$. Neumann and Raymond \cite{NR} showed that $Y$ admits a fixed point free circle action making it into a Seifert fibration over $S^2$ with the Seifert invariants $\{(a_1,b_1),\ldots, (a_n,b_n)\} = \{(1,b_1), (p, b_2), (p, b_2), (r, b_3)\}$, where $b_1\cdot pr + 2 b_2\cdot r + b_3 \cdot p = 1$. In principle, this allows for calculation of the generators in the Floer chain complex $\IC(T_{p,q})$.

\begin{example}\label{E:34}
Let us consider the torus knot $T_{3,4}$. The manifold $Y$ has Seifert invariants $\{(1,-1), (2,1), (3,1), (3,1)\}$ which match the Seifert invariants $\{(2,-1), (3,1), (3,1)\}$ of the manifold in Example \ref{E:233}. This happens for a good reason that $P(-2,3,3)$ and $T_{3,4}$ are the same knot. The calculation of Example \ref{E:233} then tells us that the ranks of the chain groups $\IC (T_{3,4})$ are $(2,1,2,2)$, with a non-trivial boundary operator $\IC_2 (T_{3,4}) \to \IC_3 (T_{3,4})$. This is consistent with \cite{HHK}. 
\end{example}


\section{Link homology of general two-component links}\label{S:links}
This section deals with general two-component links $\ML = \ell_1 \cup \ell_2$ and not just the links $\ML = \k$ used in the definition of the knot Floer homology $\I(k)$. After computing the Euler characteristic of  $I_*(\Sigma,\ML)$, we explicitly compute the Floer chain groups for some links $\ML$ with particularly simple double branched covers.


\subsection{Euler characteristic}
Let $\ML = \ell_1\,\cup\,\ell_2$ be a two-component link in an integral homology sphere $\Sigma$. The linking number $\lk(\ell_1,\ell_2)$ is well defined up to a sign for non-oriented links $\ML$.

\begin{theorem}\label{T:euler}
The Euler characteristic of the Floer homology $I_*(\Sigma,\ML)$ of a two-component link $\ML = \ell_1\,\cup\,\ell_2$ equals $\pm\,\lk(\ell_1,\ell_2)$.
\end{theorem}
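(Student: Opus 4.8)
The plan is to compute the Euler characteristic of $I_*(\Sigma,\ML)$ as a signed count of the generators of the Floer chain complex $IC_*(\Sigma,\ML)$, weighted by $(-1)^{\gr}$, and to identify this count with a signed count of $SU(2)$ representations that in turn computes a Casson-type invariant known to equal the linking number. Concretely, by Section \ref{S:hol} the generators are (after perturbation) the critical points of the perturbed Chern--Simons functional, which correspond to the nondegenerate orbits of the $H^1(X;\Z/2)$ action on $\PR_c(X,SU(2))$, equivalently to the nondegenerate points of $\MCR_\omega(\Sigma,\ML;SO(3))$. Since the Euler characteristic is a homotopy invariant of the chain complex, it is invariant under perturbation and under the choice of metric, so it suffices to compute $\sum (-1)^{\gr(\rho)}$ over these generators for any admissible perturbation.

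The first step is to relate this signed count to a signed count on the double branched cover $M \to \Sigma$. By Proposition \ref{pullback-homeo}, $\pi^*$ identifies $\MCR_\omega(\Sigma,\ML;SO(3))$ with $\MCR^\tau_\omega(M;SO(3))$, and by the equivariant index formula \eqref{E:orb=eq} the Floer gradings are computed by $\ind \D^\tau_A$ on $M$. Thus $\chi(I_*(\Sigma,\ML))$ is, up to sign, a Casson-type invariant of $M$ counting (equivariant) $SO(3)$ representations with nontrivial $w_2(Q)$, where $Q \to M$ is the nontrivial bundle of Proposition \ref{P:Q}. When $b_1(M)=0$ (the case $\Delta(-1)\neq 0$), $M$ is a $\Z/2$-homology sphere and this count is an orbifold/equivariant version of Casson's invariant; when $b_1(M)=1$ (the case $\Delta(-1)=0$), one gets a Casson invariant of a $\Z$-homology $S^1\times S^2$ with twisted coefficients, and in either case such invariants are computed by the Alexander polynomial evaluated at $-1$, which for a two-component link equals $\pm\lk(\ell_1,\ell_2)$ up to the standard normalization ambiguity (indeed $\Delta(-1)$ is, up to units, $\lk(\ell_1,\ell_2)$ times a unit for two-component links, and in fact the homological count reduces to the linking number exactly).

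The cleanest route, and the one I would actually carry out, is to bypass the detour through $M$ for the final identification: identify $\chi(I_*(\Sigma,\ML))$ with the signed count of conjugacy classes of irreducible $SU(2)$ projective representations of $\pi_1 X$ with cocycle $c$, then recognize this as a Casson--Lin type invariant of the link $\ML$. The sign-refined count of traceless $SU(2)$ representations of a link group (equivalently, projective representations with the relevant cocycle) has been computed in the literature to equal, for two-component links, $\pm\lk(\ell_1,\ell_2)$ — this is the two-component analogue of the fact that the signed count of traceless $SU(2)$ representations of a knot group recovers the knot signature/determinant data, and it can be derived from a skein-theoretic or a Heegaard-splitting argument for the character variety. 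I would state this as the key input and then show that the Floer-theoretic signed count matches it: the gradings mod $4$ reduce mod $2$ to the spectral-flow parity, which is exactly the sign attached to each representation in the Casson-type count, so the alternating sum over $IC_*(\Sigma,\ML)$ equals the Casson-type count. Because $\gr$ is only defined mod $4$, $(-1)^{\gr}$ is well defined, and Lemma \ref{L:deg} shows the $H^1(X;\Z/2)$-action changes $\gr$ by an even amount, so orbits are counted consistently.

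The main obstacle is the identification of the signed representation count with $\pm\lk(\ell_1,\ell_2)$: one must be careful that the perturbed chain complex's generators are counted with the correct signs (i.e.\ that $(-1)^{\gr}$ really is the local sign of the representation in the relevant character variety), and one must handle the positive-dimensional components of $\MCR_\omega(\Sigma,\ML;SO(3))$ — each Morse--Bott circle perturbs to two generators of consecutive gradings, contributing $0$ to the Euler characteristic, so only isolated points (and, after a generic perturbation, the finitely many critical points) contribute. Once one knows that the Morse--Bott circles cancel in pairs and that the remaining signed point-count is the link's $SU(2)$ Casson invariant, the evaluation $\pm\lk(\ell_1,\ell_2)$ follows from the known computation of that invariant for two-component links. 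I would organize the write-up as: (1) Euler characteristic is perturbation-independent; (2) it equals the signed count of nondegenerate critical points, with Morse--Bott circles contributing zero; (3) via the holonomy correspondence this is the signed count of irreducible projective $SU(2)$ representations of $\pi_1 X$; (4) this signed count is a Casson--Lin-type invariant equal to $\pm\lk(\ell_1,\ell_2)$.
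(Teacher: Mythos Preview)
Your approach is genuinely different from the paper's. The paper does not attempt to count generators directly at all: it invokes the Floer excision principle of Kronheimer--Mrowka to identify $I_*(\Sigma,\ML)$ with the Floer homology of a \emph{closed} 3-manifold $X_\varphi$ obtained by gluing together the two boundary tori of the link exterior, chooses $\varphi$ so that $X_\varphi$ has the homology of $S^1\times S^2$, and then quotes the result of Harper--Saveliev \cite{HS} that the Euler characteristic of that admissible-bundle Floer homology equals $\pm\lk(\ell_1,\ell_2)$. No double branched cover, no equivariant index, no Casson--Lin count appears.

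Your route --- Euler characteristic $=$ signed count of projective representations $=$ Casson--Lin invariant of the link $=\pm\lk$ --- is viable in outline, but as written it has a real gap at exactly the point where the content lies. Step (4) is the whole theorem: you assert that the signed count of irreducible projective $SU(2)$ representations of a two-component link group equals $\pm\lk(\ell_1,\ell_2)$, saying only that this ``has been computed in the literature'' or ``can be derived from a skein-theoretic or Heegaard-splitting argument.'' That is precisely the nontrivial input (it is the Harper--Saveliev Casson--Lin invariant for links, or equivalently \cite[Theorem 2.3]{HS} once one has passed through $X_\varphi$), and you neither prove it nor give a precise citation. Without it, your argument is a reduction to an unproved statement of equal strength. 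You would also need to verify that the Floer-theoretic sign $(-1)^{\gr}$ agrees with the Casson--Lin local sign, i.e.\ that the spectral-flow parity on the orbifold matches the determinant-line orientation used in the representation count; this is standard but not automatic.

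Separately, the detour through the double branched cover should be dropped. Your claim that the relevant Casson-type invariant of $M$ is computed by $\Delta(-1)$ and that $\Delta(-1)=\pm\lk(\ell_1,\ell_2)$ is incorrect: for a two-component link, $|\Delta(-1)|$ is the order of $H_1(M;\Z)$ when finite (Proposition~\ref{P:kawauchi}), which is not the linking number in general (e.g.\ $|H_1(\RP^3)|=2$ for the Hopf link with $\lk=\pm1$). That paragraph does not lead anywhere and contains errors; the ``cleanest route'' you describe afterward is the only salvageable one, and it still needs step (4) to be supplied.
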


\begin{proof}
The Floer excision principle can be used as in \cite{KM:khovanov} to establish an isomorphism between $I_*(\Sigma,\ML)$ and the sutured Floer homology of $\ML$. The latter is the Floer homology of the 3-manifold $X_{\varphi}$ obtained by identifying the two boundary components of $S^3 - \Int N(\ML)$ via an orientation reversing homeomorphism $\varphi: T^2 \to T^2$. According to \cite[Lemma 2.1]{HS}, the homeomorphism $\varphi$ can be chosen so that $X_{\varphi}$ has integral homology of $S^1 \times S^2$. The result then follows from \cite[Theorem 2.3]{HS} which asserts that the Euler characteristic of the sutured Floer homology of $\ML$ equals $\pm\, \lk (\ell_1,\ell_2)$.
\end{proof}

Theorem \ref{T:euler} implies in particular that the Euler characteristic of $\I(k)$ equals $\pm 1$, which is the linking number of the two components of the link $\k$. This also follows from the fact that the critical point set of the orbifold Chern--Simons functional used to define $\I(k)$ consists of an isolated point and finitely many isolated circles, possibly after a perturbation. An absolute grading on $\I(k)$ was fixed in \cite{KM:khovanov} so that the grading of the isolated point is even; this is consistent with our Theorem \ref{T:gr} because $\sign k$ is always even. The Euler characteristic of $\I(k)$ then equals $+1$. We do not know how to fix an absolute grading on $I_*(\Sigma,\ML)$ for a general two-component link $\ML$.


\subsection{Pretzel link $P(2,-3,-6)$}\label{S:pretzel}
This is the two-component link $\ML$ whose double branched cover is the Seifert fibered manifold $M$ with unnormalized Seifert invariants $(2,1)$, $(3,-1)$, and $(6,-1)$, see for instance \cite[Section 4]{S:top2}. In particular,
\[
\pi_1 M = \langle\, x, y, z,h\;|\;h\;\text{central},\; x^2 = h^{-1},\; y^3 = h,\; z^6 = h,\; xyz = 1\,\rangle,
\]
with the covering translation $\tau: M \to M$ acting on the fundamental group by the rule
\[
\tau_*(h) = h^{-1},\;\; \tau_*(x) = x^{-1},\;\; \tau_*(y) = x y^{-1}x^{-1},\;\; \tau_*(z) = xyz^{-1}y^{-1}x^{-1},
\]
see Burde--Zieschang \cite[Proposition 12.30]{BZ}. The manifold $M$ has integral homology of $S^1 \times S^2$. In fact, it can be obtained by 0--surgery on the right-handed trefoil so that $\pi_1 M = \pi_1 K /\langle \lambda \rangle$, where $K$ is the exterior of the trefoil and $\lambda$ is its longitude. The relation $\lambda = 1$ shows up as the relation $z^6 = h$ in the above presentation of $\pi_1 M$. 

We will use this surgery presentation of $M$ to describe representations $\pi_1 M \to SO(3)$ with non-trivial $w_2 \in H^2 (M;\Z/2) = \Z/2$. According to Example \ref{E:floer}, the conjugacy classes of such representations are in one-to-two correspondence with the conjugacy classes of representations $\rho: \pi_1 K \to SU(2)$ such that $\rho (\lambda) = -1$. In the terminology of Section \ref{S:proj}, these $\rho$ are projective representations $\rho: \pi_1 M \to SU(2)$, and the group $H^1 (M;\Z/2) = \Z/2$ acts on them freely providing the claimed one-to-two correspondence. Therefore, we wish to find all the $SU(2)$ matrices $\rho(h)$, $\rho(x)$, $\rho(y)$, and $\rho(z)$ such that 
\[
\rho(x)^2 = \rho(h)^{-1},\;\; \rho(y)^3 = \rho(h),\;\; \rho(z)^6 = - \rho(h),\;\; \rho(x)\rho(y)\rho(z) = 1,
\]
and $\rho(h)$ commutes with $\rho(x)$, $\rho(y)$, and $\rho(z)$. Since $\rho$ is irreducible, we conclude as in Fintushel--Stern \cite[Section 2]{FS} that $\rho(h) = -1$ and that $\rho(x)$ is conjugate to $i$, $\rho(y)$ is conjugate to $e^{\pi i/3}$, and $\rho(z)$ is conjugate to either $e^{\pi i/3}$ or $e^{2\pi i/3}$. These give rise to two conjugacy classes of projective representations $\rho: \pi_1 M \to SU(2)$ corresponding to a single conjugacy class of representations $\Ad\rho: \pi_1 M \to SO(3)$. 

The arguments of \cite[Proposition 2.5]{FS} and \cite[Proposition 8]{S:jdg} can be easily adapted to conclude that the representation $\Ad\rho$ is non-degenerate and equivariant. It gives rise to a single $\Z/2\,\oplus\,\Z/2$ orbit of generators in $IC_*(S^3,\ML)$. Since the linking number between the components of $\ML$ is even, Lemma \ref{L:deg} tells us that the (relative) Floer indices of these four generators are $0, 0, 2, 2 \pmod 4$. The boundary operators then must vanish, and we conclude that the Floer homology groups $I_k (S^3,\ML)$ are free abelian groups of ranks $(2,0,2,0)$, up to cyclic permutation.

\begin{remark}
The same result can be obtained independently using the isomorphism between $I_*(S^3,\ML)$ and the sutured Floer homology of $\ML$ defined in \cite{KM:instanton}. The latter is the Floer homology of the manifold $X_{\varphi}$ obtained by identifying the two boundary components of $X = S^3 - \Int N(\ML)$ via an orientation reversing homeomorphism $\varphi: T^2 \to T^2$. A surgery description of $X_{\varphi}$ can be found in \cite{HS}; computing its Floer homology is then an exercise in applying the Floer exact triangle to this surgery description.
\end{remark}


\subsection{Montesinos links}\label{S:mont}
Let $(a_1,b_1),\ldots, (a_n,b_n)$ be pairs of integers such that, for each $i$, the integers $a_i$ and $b_i$ are relatively prime and $a_i$ is positive. Associated with these pairs is the Montesinos link $K((a_1,b_1),\ldots, (a_n,b_n))$ whose definition can be found for instance in \cite[Chapter 7]{BZ}. All two-bridge and pretzel links are Montesinos links; for example, the link $P(2,-3,-6)$ considered in the previous section is the Montesinos link with the parameters $(2,1)$, $(3,-1)$, and $(6,-1)$. The double branched covers $M$ of Montesinos links were described in Section \ref{S:mont2}. In this section, we will only be interested in Montesinos links whose double branched covers have integral homology of $S^1 \times S^2$, a condition that is easily checked by abelianizing $\pi_1 M$. This condition guarantees that the unique $SO(3)$ bundle $P \to M$ with non-trivial $w_2 (P) \in H^2 (M;\Z/2) = \Z/2$ does not carry any reducible connections. 

The generators of Floer chain complex of the link $K((a_1,b_1),\ldots,(a_n,b_n))$ and their gradings can be computed explicitly using the equivariant theory developed in this paper; here is a brief outline. 

Since $M$ is Seifert fibered, the representations $\pi_1 M \to SO(3)$ with non-trivial $w_2$ can be described in terms of their rotation numbers using a slight modification of the Fintushel--Stern \cite{FS} algorithm; complete details can be found in \cite{S:sbornik}. If $n = 3$, there are finitely many conjugacy classes of such representations, all of which are non-degenerate and equivariant with the conjugating element of order two. If $n \ge 4$, the same conclusion holds after using $\tau$--equivariant perturbations similar to those described in \cite{S:top3}. Note that no equivariant transversality issues are caused by the action of $H^1 (M;\Z/2)$ or $H^1 (X;\Z/2)$ because both actions are free. In what follows, we will restrict ourselves to the case of $n = 3$; however, we expect that the same results will hold for all $n$.

The relative indices of the operator $\DA$ on $\mathbb R\,\times\, M$ were computed explicitly in \cite{S:sbornik} and shown to be even. The relative Floer gradings of the generators in the Floer chain complex of the link $K((a_1,b_1),(a_2,b_2),(a_3,b_3))$ are equal to one half times those indices, by the argument of \cite[Section 5.2]{S:jdg} modified to take into account the non-triviality of the bundle $P \to M$. 

The final outcome of this calculation can be stated in terms of the Floer homology groups $I_*(M,P)$ of the unique admissible bundle $P \to M$ as follows. The groups $I_* (M,P)$ are free abelian of ranks $(n_0, n_1, n_2, n_3)$, up to cyclic permutation, with either $n_0 = n_2 = 0$ or $n_1 = n_3 = 0$. Assume for the sake of concreteness that $n_0 = n_2 = 0$ then the Floer chain groups of $K((a_1,b_1),\ldots,(a_n,b_n))$, up to cyclic permutation, have the ranks
\smallskip
\begin{equation}\label{E:nnnn}
(2n_1,\; 2n_3,\; 2n_1,\; 2n_3).
\end{equation}

\begin{example} 
The double branched cover $M$ of the Montesinos link $\ML = K((2,1),(5,-2),(10,-1))$ can be obtained by 0--surgery on the right-handed torus knot $T_{2,5}$. Applying the Floer exact triangle to this surgery, we conclude that $I_*(M,P)\,\oplus\,I_{*+4}\,(M,P) = I_*(\Sigma(2,15,11))$, where we use the mod 8 grading in both groups. Fintushel and Stern \cite{FS} showed\footnote[3]{\;We adjusted the formulas of \cite{FS} to take into account that Fintushel and Stern work with SD rather than ASD equations.} that the groups $I_k (\Sigma(2,5,11))$ are free abelian of the ranks $(0,1,0,2,0,1,0,2)$. Therefore, $n_1 = 1$, $n_3 = 2$, and the Floer chain groups of the link $\ML$ have the ranks $(2,4,2,4)$.
\end{example}

In fact, the integers $n_1$ and $n_3$ in the formula \eqref{E:nnnn} can be computed much more easily in terms of classical knot invariants without any reference to the Floer homology. They are known to satisfy the equations 
\[
-n_1 - n_3\; =\; \lambda' (M)\quad\text{and}\quad - n_1 + n_3\;=\;\bar\mu'(M),
\]
where $\lambda'(M)$ is the Casson invariant of $M$ and $\bar\mu'(M)$ its Neumann invariant \cite{N}. The former equation follows from the Casson surgery formula and the latter from \cite{S:top2}. The Casson and Neumann invariants can then be computed explicitly using the formulas
\[
\lambda' (M)\; =\;-\,1/2 \cdot \Delta''_M(1)\quad\text{and}\quad
\bar\mu' (M)\; = \; \pm\, \lk (\ell_1,\ell_2),
\]
where $\Delta_M(t)$ is the Alexander polynomial of $M$ normalized so that $\Delta_M(1) = 1$ and $\Delta(t) = \Delta(t^{-1})$, and $\lk (\ell_1,\ell_2)$ is the linking number between the components of the link $\ML$. Note that there is no need to fix the sign in the above formula because switching that sign preserves the answer \eqref{E:nnnn} up to cyclic permutation.


\section{Appendix: homology of double branched covers}\label{S:top}
This section contains a proof of Proposition \ref{P:H2} which was postponed until later in Section \ref{S:prelim}.


\subsection{Computing $H_*(M;\Z/2)$}
In this section, we will compute the groups $H_*(M;\Z/2)$ using the transfer homomorphism approach of \cite{LW}. 

The transfer homomorphisms can be defined in the following two equivalent ways, see for instance \cite[Section 3]{DK}. For each singular simplex $\sigma:\Delta \to \Sigma$, choose a lift $\tilde\sigma: \Delta \to M$ and define the chain map $\pi_{!}: C_*(\Sigma) \longrightarrow C_*(M)$ by the formula $\pi_{!}(\sigma) =\tilde\sigma + \tau \circ \tilde\sigma$. This map is obviously independent of the choice of $\tilde\sigma$, and it induces homomorphisms $\pi_{!}: H_{*}(\Sigma) \longrightarrow H_{*}(M)$ and $\pi^{!}: H^{*}(M) \longrightarrow H^{*}(\Sigma)$ in homology and cohomology with arbitrary coefficients, called transfer homomorphisms. Another way to define $\pi_{!}$ is as the map that makes the following digram commute, 
\begin{equation*}
\begin{tikzpicture} [scale =1]
\draw(1,1) node(a) {$H_*(\Sigma)$};
\draw(5,1) node(b) {$H^{*}(\Sigma)$};
\draw(1,3) node(c) {$H_{*}(M)$};
\draw(5,3) node(d) {$H^{*}(M)$};
\draw[->](a)--(c) node [midway,left](TextNode){$\pi_{!}$};
\draw[->](b)--(d) node [midway,right](TextNode){$\pi^{*}$};
\draw[->](d)--(c) node [midway, above] (TextNode) {$\PD$};
\draw[->](b)--(a) node [midway, above] (TextNode) {$\PD$};
\end{tikzpicture}
\end {equation*}
where $\PD$ stands for the Poincar\'e duality isomorphism, and similarly for $\pi^{!}$.

From now on, all chain complexes and (co)homology will be assumed to have $\Z/2$ coefficients. It is then immediate from the definition of $\pi_{!}: C_*(\Sigma) \longrightarrow C_*(M)$ that $\ker  \pi_{!} = C_{*}(\ML)$ and that we have a short exact sequence of chain complexes  
\begin{equation*}
\begin{tikzpicture}
\draw (1,1) node (a) {$0$};
\draw (3,1) node (b) {$C_{*}(\Sigma, \ML)$};
\draw (6,1) node (c) {$C_{*}(M)$};
\draw (9,1) node (d) {$C_{*}(\Sigma)$};
\draw (11,1) node (e) {$0$};
\draw[->](a)--(b);
\draw[->](b)--(c) node [midway,above](TextNode){$\pi_{!}$};
\draw[->](c)--(d) node [midway,above](TextNode){$\pi_{*}$};
\draw[->](d)--(e);
\end{tikzpicture}
\end {equation*}
This exact sequence induces long exact sequences in homology
\begin{equation*}
\begin{tikzpicture}
\draw (1,3) node (a) {$0$};
\draw (3,3) node (b) {$H_{3}(\Sigma, \ML)$};
\draw (6,3) node (c) {$H_{3}(M)$};
\draw (9,3) node (d) {$H_{3}(\Sigma)$};
\draw (11,3) node (e) {};
\draw[->](a)--(b);
\draw[->](b)--(c) node [midway,above](TextNode){$\pi_{!}$};
\draw[->](c)--(d);
\draw[->](d)--(e) node [midway, above](TextNode){$$};
\draw (1,2) node (f) {};
\draw (3,2) node (g) {$H_{2}(\Sigma, \ML)$};
\draw (6,2) node (h) {$H_{2}(M)$};
\draw (9,2) node (i) {$H_{2}(\Sigma)$};
\draw (11,2) node(j) {};
\draw[->](f)--(g);
\draw[->](g)--(h) node [midway,above](TextNode){$\pi_{!}$};
\draw[->](h)--(i);
\draw[->](i)--(j) node [midway, above](TextNode){$$};
\draw (1,1) node (k) {};
\draw (3,1) node (l) {$H_{1}(\Sigma, \ML)$};
\draw (6,1) node (m) {$H_{1}(M)$};
\draw (9,1) node (n) {$H_{1}(\Sigma)$};
\draw (11,1) node (o) {$0$};
\draw[->](k)--(l);
\draw[->](l)--(m) node [midway,above](TextNode){$\pi_{!}$};
\draw[->](m)--(n);
\draw[->](n)--(o);
\end{tikzpicture}
\end {equation*}
\noindent and in cohomology
\begin{equation*}
\begin{tikzpicture}[scale=1.0]
\draw (1,3) node (a) {$0$};
\draw (3,3) node (b) {$H^{1}(\Sigma)$};
\draw (6,3) node (c) {$H^{1}(M)$};
\draw (9,3) node (d) {$H^{1}(\Sigma, \ML)$};
\draw (11,3) node (e) {};
\draw[->](a)--(b);
\draw[->](b)--(c);
\draw[->](c)--(d) node [midway,above](TextNode){$\pi^{!}$};
\draw[->](d)--(e);
\draw (1,2) node (f) {};
\draw (3,2) node (g) {$H^{2}(\Sigma)$};
\draw (6,2) node (h) {$H^{2}(M)$};
\draw (9,2) node (i) {$H^{2}(\Sigma, \ML)$};
\draw (11,2) node(j) {};
\draw[->](f)--(g);
\draw[->](g)--(h);
\draw[->](h)--(i) node [midway,above](TextNode){$\pi^{!}$};
\draw[->](i)--(j);
\draw (1,1) node (k) {};
\draw (3,1) node (l) {$H^{3}(\Sigma)$};
\draw (6,1) node (m) {$H^{3}(M)$};
\draw (9,1) node (n) {$H^{3}(\Sigma, \ML)$};
\draw (11,1) node (o) {$0$};
\draw[->](k)--(l);
\draw[->](l)--(m);
\draw[->](m)--(n) node [midway,above](TextNode){$\pi^{!}$};
\draw[->](n)--(o);
\end{tikzpicture}
\end {equation*}
Combining these with the long exact sequence of the pair $(\Sigma,\ML)$ we obtain the following result.

\begin{prop}
Let $\pi: M \longrightarrow \Sigma$ be a double branched cover over an integral homology sphere $\Sigma$ with branching set a two-component link $\ML$. Then $H_i (M;\Z/2) = H^i (M;\Z/2)$ is isomorphic to $\Z/2$ if $i = 0, 1, 2, 3$, and is zero otherwise.
\end{prop}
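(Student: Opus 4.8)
The plan is to extract everything from the two transfer long exact sequences displayed above, combined with the long exact sequence of the pair $(\Sigma,\ML)$, all taken with $\Z/2$ coefficients. I would begin with the routine reductions. Because $\Z/2$ is a field, the universal coefficient theorem gives $H^i(M;\Z/2)\cong H_i(M;\Z/2)$, so it is enough to work with homology. The branched cover $M$ is a closed connected $3$-manifold, connected because the associated double cover $\widetilde X\to X$ is classified by the surjection $\pi_1 X\to\Z/2$ sending both meridians to the generator. Hence $H_0(M;\Z/2)=\Z/2$, $H_3(M;\Z/2)=\Z/2$, and $H_i(M;\Z/2)=0$ for $i<0$ or $i>3$. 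By Poincar\'e duality $H_2(M;\Z/2)\cong H^1(M;\Z/2)\cong H_1(M;\Z/2)$, so the whole proposition reduces to the single computation $H_1(M;\Z/2)=\Z/2$.

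Next I would compute $H_1(\Sigma,\ML;\Z/2)$ from the exact sequence of the pair. Since $\Sigma$ is an integral homology sphere, $H_1(\Sigma;\Z/2)=H_2(\Sigma;\Z/2)=0$, and since $\ML$ is a disjoint union of two circles, $H_0(\ML;\Z/2)=H_1(\ML;\Z/2)=\Z/2\oplus\Z/2$. The relevant portion of the long exact sequence of $(\Sigma,\ML)$ is
\[
H_1(\Sigma;\Z/2)\;\longrightarrow\;H_1(\Sigma,\ML;\Z/2)\;\overset{\partial}{\longrightarrow}\;H_0(\ML;\Z/2)\;\longrightarrow\;H_0(\Sigma;\Z/2),
\]
that is, $0\to H_1(\Sigma,\ML;\Z/2)\to\Z/2\oplus\Z/2\to\Z/2$. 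The last map is induced by inclusion and sends each of the two point classes to the generator of $H_0(\Sigma;\Z/2)$, so it is surjective with one--dimensional kernel; therefore $H_1(\Sigma,\ML;\Z/2)\cong\Z/2$.

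Finally I would feed this into the transfer long exact sequence in homology. Using $H_1(\Sigma;\Z/2)=H_2(\Sigma;\Z/2)=0$, the segment
\[
H_2(\Sigma;\Z/2)\;\longrightarrow\;H_1(\Sigma,\ML;\Z/2)\;\overset{\pi_{!}}{\longrightarrow}\;H_1(M;\Z/2)\;\longrightarrow\;H_1(\Sigma;\Z/2)
\]
collapses to an isomorphism $\pi_{!}\colon H_1(\Sigma,\ML;\Z/2)\xrightarrow{\ \cong\ }H_1(M;\Z/2)$, whence $H_1(M;\Z/2)=\Z/2$. Combined with the first paragraph this yields $H_i(M;\Z/2)=H^i(M;\Z/2)=\Z/2$ for $i=0,1,2,3$ and $0$ otherwise.

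The only genuine content here is the bookkeeping: correctly identifying the connecting map $H_0(\ML;\Z/2)\to H_0(\Sigma;\Z/2)$ as the augmentation--type map with one--dimensional kernel, and verifying that exactness forces $\pi_{!}$ on $H_1$ to be an isomorphism and not merely injective. As a consistency check one may run the argument one degree higher: the pair sequence gives $H_2(\Sigma,\ML;\Z/2)=\Z/2\oplus\Z/2$ and $H_3(\Sigma,\ML;\Z/2)=\Z/2$, and since $\pi_{*}$ vanishes on top homology mod $2$ the transfer sequence forces $\pi_{!}$ to have one--dimensional kernel on $H_2(\Sigma,\ML;\Z/2)$, so that $H_2(M;\Z/2)=\Z/2$ as well, in agreement with the Poincar\'e duality computation. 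I do not expect any obstacle beyond this routine diagram chase.
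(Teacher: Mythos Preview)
Your proof is correct and follows exactly the approach the paper indicates: combine the transfer long exact sequence with the long exact sequence of the pair $(\Sigma,\ML)$, using that $\Sigma$ is a $\Z/2$-homology sphere and $\ML$ is two circles. The paper states this in a single sentence without carrying out the chase; you have simply filled in the details, with the minor additional shortcut of invoking Poincar\'e duality and connectedness of $M$ to reduce everything to the computation of $H_1(M;\Z/2)$.
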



\subsection{The cup-product on $H^*(M;\Z/2)$} \label{S:cup}
This section is devoted to the proof of the following result. We continue working with $\Z/2$ coefficients.

\begin{prop}\label{P:cup}
The cup-product $H^1(M) \times H^1(M) \to H^2(M)$ is the bilinear form $\Z/2 \times \Z/2 \to \Z/2$ with the matrix $\lk (\ell_1,\ell_2) \pmod 2$.
\end{prop}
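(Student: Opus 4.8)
The plan is to realize the generator of $H^1(M;\Z/2)$ as the Poincar\'e dual of a concrete surface and then to compute its cup square by pairing against that surface. Fix an embedded arc $w\subset\Sigma$ joining $\ell_1$ to $\ell_2$ and meeting $\ML$ only at its endpoints, and fix a Seifert surface $F_1\subset\Sigma$ for $\ell_1$ (which exists since $\ell_1$ is null-homologous in the homology sphere $\Sigma$) in general position with respect to $\ell_2$; set $n=\#(F_1\cap\ell_2)$, so that $n\equiv\lk(\ell_1,\ell_2)\pmod 2$, and arrange in addition that $w$ meets $F_1$ only at its endpoint $q$ on $\ell_1$. Then $\tilde w=\pi^{-1}(w)\subset M$ is an embedded circle, and $A=\pi^{-1}(F_1)\subset M$ is a closed embedded surface, obtained from $F_1$ by the double cover which is branched over the $n$ points of $F_1\cap\ell_2$ and which, because $\ell_1=\partial F_1$ is the branch locus of $\pi$, turns the boundary circle $\ell_1$ into an interior circle.

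First I would check that $[A]$ generates $H_2(M;\Z/2)$ and $[\tilde w]$ generates $H_1(M;\Z/2)$ (recall both groups are $\Z/2$ by the proposition of the previous subsection). Indeed $A\cap\tilde w=\pi^{-1}(F_1\cap w)=\pi^{-1}(q)$ is a single point, and a computation in the local model of the branched cover near $\ell_1$ shows this intersection is transverse, so $[A]\cdot[\tilde w]=1\in\Z/2$; non-degeneracy of the $\Z/2$ intersection pairing on the $3$-manifold $M$ then forces both classes to be generators (this also exhibits the explicit generator $\pi^{-1}(w)$ of $H_1(M;\Z/2)$ invoked in the proof of Lemma~\ref{L:branched}). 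Put $\mathbf a=\PD_M[A]$, the generator of $H^1(M;\Z/2)$. Since $[A]$ is Poincar\'e dual to $\mathbf a$, the self-intersection formula for a closed embedded surface in an oriented $3$-manifold gives $\iota^*\mathbf a=w_1(\nu_A)=w_1(A)$ for the inclusion $\iota\colon A\hookrightarrow M$, using $w_1(TM)=0$; hence, by the Wu relation $w_1(A)^2=w_2(A)$ on the closed surface $A$,
\[
\langle\,\mathbf a\cup\mathbf a,\,[A]\,\rangle=\langle\,(\iota^*\mathbf a)^2,\,[A]\,\rangle=\langle\,w_1(A)^2,\,[A]\,\rangle=\langle\,w_2(A),\,[A]\,\rangle=\chi(A)\bmod 2 .
\]
A cell count for the map $A\to F_1$ (which is $2$-to-$1$ over $F_1\setminus\ML$ and $1$-to-$1$ over $\ell_1$ and over the $n$ points of $F_1\cap\ell_2$) yields $\chi(A)=2\chi(F_1)-n$, so $\chi(A)\equiv n\equiv\lk(\ell_1,\ell_2)\pmod 2$. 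As $H^2(M;\Z/2)=\Z/2$ and evaluation on the generator $[A]$ of $H_2(M;\Z/2)$ is an isomorphism onto $\Z/2$, this identifies $\mathbf a\cup\mathbf a$ with $\lk(\ell_1,\ell_2)\bmod 2$, as claimed.

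The step requiring genuine care is the local analysis of $A=\pi^{-1}(F_1)$ near $\ell_1=\partial F_1$: there the branch locus of $\pi$ runs along the boundary of $F_1$ rather than through its interior, so $A$ is not a branched cover of $F_1$ in the usual sense, and one must confirm both that $A$ is embedded and closed along $\ell_1$ and that the single intersection point $A\cap\tilde w=\pi^{-1}(q)$ is transverse. Once the local models near $\ell_1$ and near the interior branch points $F_1\cap\ell_2$ are written down, the Euler-characteristic count and the cohomological computation above are routine. As a consistency check, for the Hopf-link summand relevant to $\IC(k)$ one has $\lk(\ell_1,\ell_2)=1$ and $M=Y\#\,\RP^3$, and indeed the cup product on $H^1(M;\Z/2)=H^1(\RP^3;\Z/2)$ is non-trivial.
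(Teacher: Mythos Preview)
Your argument is correct and takes a genuinely different route from the paper's. The paper uses \emph{two} Seifert surfaces $S_1,S_2$ (one for each component), shows via the transfer homomorphism and a result of Lee--Weintraub that $\pi^{-1}(S_1)$ and $\pi^{-1}(S_2)$ represent the \emph{same} generator of $H_2(M;\Z/2)$, and then reads off the cup square from the transverse geometric intersection $\pi^{-1}(S_1)\cap\pi^{-1}(S_2)=\pi^{-1}(S_1\cap S_2)$, which is $\lk(\ell_1,\ell_2)$ copies of $\pi^{-1}(w)$. You instead work with a \emph{single} Seifert surface $F_1$, identify $\iota^*\mathbf a$ with $w_1(\nu_A)=w_1(A)$, and reduce the cup square to $\chi(A)\bmod 2$ via the Wu relation $w_1^2=w_2$ on a closed surface together with a Riemann--Hurwitz count. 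The paper's approach is more hands-on geometric and avoids characteristic-class machinery, at the cost of invoking an external lemma to see that the two lifted Seifert surfaces are homologous; your approach trades that lemma for standard facts about Stiefel--Whitney classes and has the pleasant feature that the linking number emerges as an Euler characteristic defect. The local analysis you flag near $\ell_1$ (that $A$ closes up smoothly and meets $\tilde w$ transversely at the single point $\pi^{-1}(q)$) is indeed the only delicate step, and it goes through in the standard $z\mapsto z^2$ model.
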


\begin{proof}
We will reduce the cup-product calculation to intersection theory using the commutative diagram
\begin{equation*}
\begin{tikzpicture} [scale =1]
\draw(1,1) node(a) {$H^{1}(M) \times H^{1}(M)$ };
\draw(5,1) node(b) {$H^{2}(M)$};
\draw(1,3) node(c) {$H_{2}({M}) \times H_{2}({M}) $};
\draw(5,3) node(d) {$H_{1}(M)$};
\draw[->](a)--(c) node [midway,left](TextNode){PD};
\draw[->](b)--(d) node [midway,right](TextNode){PD};
\draw[->](a)--(b) node [midway, above] (TextNode) {$\cup$};
\draw[->](c)--(d) node [midway, above] (TextNode) {$\cdot$};
\end{tikzpicture}
\end{equation*}
where $\PD$ stands for the Poincar\'e duality isomorphisms and $\cdot$ for the intersection product. The transfer homomorphism $\pi_!: H_* (\Sigma,\ML) \to H_* (M)$ will give us explicit generators of $H_1(M)$ and $H_2(M)$ that we need to proceed with this approach.

We begin with the group $H_1(M)$. Note that $H_1 (\Sigma,\ML) = \Z/2$ is generated by the homology class $[w]$ of any embedded arc $w \subset \Sigma$ whose endpoints belong to two different components of $\ML$. The transfer homomorphism $\pi_!: H_1 (\Sigma,\ML) \to H_1 (M)$ maps the homology class of $w$ to that of the circle $\pi^{-1}(w)$. Since $\pi_!$ is an isomorphism, we conclude that the circle $\pi^{-1}(w)$ represents a generator of $H_1(M)$.

To describe a generator of $H_2 (M)$, observe that $H_2 (\Sigma,\ML) = \Z/2\,\oplus\,\Z/2$ is generated by the homology classes of Seifert surfaces $S_1$ and $S_2$ of the knots $\ell_1$ and $\ell_2$. We will assume that $S_1$ and $S_2$ intersect transversely in a finite number of circles and arcs, and note that $S_1\,\cap\,S_2$ is homologous to $\lk (\ell_1, \ell_2)\cdot w$. We claim that the closed orientable surfaces $\pi^{-1} (S_1)$ and $\pi^{-1} (S_2)$, representing the homology classes $\pi_!\,([S_1])$ and $\pi_!\, ([S_2])$, are homologous to each other and generate $H_2 (M)$. To see this, we will appeal to Theorem 2 of \cite{LW}, which supplies us with the commutative diagram with an exact row,
\medskip
\begin {equation*}
\begin{tikzpicture}
\draw (1,3) node (a) {$0$};
\draw (4,3) node (b) {$H_3(\Sigma)$};
\draw (7,3) node (c) {$H_2(\Sigma, \ML)$};
\draw (10,3) node (d) {$H_2(M)$};
\draw (13,3) node (e) {$0$};
\draw (7,1) node (f) {$H_{1}(\ML)$};
\draw[->](a)--(b);
\draw[->](b)--(c) node [midway,above](TextNode){$d_*$};
\draw[->](c)--(d) node [midway,above](TextNode){$\pi_{!}$};
\draw[->](d)--(e);
\draw[->](c)--(f) node [midway,right](TextNode){$\partial_{*}$};
\draw[->](b)--(f) node[midway, below,](TextNode){$f \; \;$};
\end{tikzpicture}
\end {equation*}
where $f ([\Sigma]) = [\ell_1] + [\ell_2]$ and $\partial_{*}$ is the connecting homomorphism in the long exact sequence of the pair $(\Sigma, \ML)$. One can easily see that $\partial_{*}$ is an isomorphism. Since $\partial_*([S_1] + [S_2]) = [\ell_1] + [\ell_2] = f([\Sigma])$ we conclude that $[S_1] + [S_2] \in \im d_* = \ker \pi_!$ and hence $\pi_!\,([S_1]) = \pi_!\,([S_2])$ is a generator of $H_2(M)$.

The calculation of the intersection form $H_2(M)\times H_2(M) \to H_1(M)$ is now completed as follows\,:
\begin{multline}\notag
[\pi^{-1}(S_1)]\cdot [\pi^{-1}(S_2)] \,=\, [\pi^{-1} (S_1)\,\cap\,\pi^{-1}(S_2)] \\ =\, [\pi^{-1}(S_1\,\cap\,S_2)]\; =\; \lk(\ell_1,\ell_2)\cdot [\pi^{-1}(w)].
\end{multline}
\end{proof}

\begin{remark}
Let $\beta \in H^1 (M) = \Z/2$ be a generator and assume that $\lk(\ell_1,\ell_2)$ is odd. Proposition \ref{P:cup} implies that $\beta\,\cup\,\beta \in H^2 (M)$ is non-trivial, and a straightforward argument with the Poincar\'e duality shows that $\beta\,\cup\,\beta\,\cup\,\beta$ generates $H^3 (M)$. If $\lk (\ell_1,\ell_2)$ is even then $\beta\,\cup\,\beta = 0$, and the cup-product of $\beta$ with a generator of $H^2 (M)$ generates $H^3 (M)$. This gives a complete description of the cohomology ring $H^* (M)$.
\end{remark}

\begin{example}
The real projective space $\RP^3$ is a double branched cover over the Hopf link in $S^3$ with linking number $\pm 1$. Choose Seifert surfaces $S_1$ and $S_2$ to be the obvious disks intersecting in a single interval $w$. Then $\pi^{-1}(S_1)$ and $\pi^{-1}(S_2)$ are two copies of $\RP^2$, each represented as a double branched cover of a disk with branching set a disjoint union of a circle and a point. These two copies of $\RP^2$ intersect in the circle $\pi^{-1}(w)$ thereby recovering the familiar cup-product structure on $H^*(\RP^3;\Z/2)$.
\end{example}


\bigskip

\end{document}